\newtheorem{theorem}{Theorem}[section]
\newtheorem{proposition}[theorem]{Proposition}
\newtheorem{lemma}[theorem]{Lemma}
\theoremstyle{definition}
\newtheorem{definition}[theorem]{Definition}
\newtheorem{example}[theorem]{Example}
\newtheorem{proposition-definition}[theorem]{Proposition-Definition}
\newtheorem{definition-theorem}[theorem]{Definition-Theorem}
\newtheorem{corollary}[theorem]{Corollary}
\theoremstyle{remark}
\newtheorem{remark}[theorem]{Remark}
\numberwithin{equation}{section}
\def\mod{\opname{mod}\nolimits}
\def\proj{\opname{proj}\nolimits}
\def\inj{\opname{inj}\nolimits}
\def\Tr{\opname{Tr}\nolimits}
\newcommand{\opname}[1]{\operatorname{\mathsf{#1}}}
\newcommand{\Hom}{\opname{Hom}}
\newcommand{\thick}{\opname{thick}}
\newcommand{\End}{\opname{End}}
\newcommand{\K}{\opname{K}}
\newcommand{\Filt}{\opname{Filt}}
\newcommand{\ftors}{\opname{f.tors}}
\newcommand{\tors}{\opname{tors}}
\newcommand{\Ext}{\opname{Ext}}
\newcommand{\Fac}{\opname{Fac}}
\newcommand{\Sub}{\opname{Sub}}
\newcommand{\add}{\opname{add}\nolimits}
\begin{document}

\title[Relative left Bongartz completions and their compatibility with mutations]{Relative left Bongartz completions and \\ their compatibility with mutations}


\author{Peigen Cao}
\address{Einstein Institute of Mathematics, Edmond J. Safra Campus, The Hebrew University of Jerusalem, Jerusalem  91904, Israel \newline %
Present address: Department of Mathematics   \\
The University of Hong Kong \\
Pokfulam Road               \\
Hong Kong
}
\email{peigencao@126.com}

\author{Yu Wang}
\address{School of Mathematics and Statistics, Taiyuan Normal University, Jinzhong 030619, P. R. China}
\email{dg1621017@smail.nju.edu.cn}

\author{Houjun Zhang}
\address{School of Science, Nanjing University of Posts and Telecommunications, Nanjing 210023, P. R. China}
\email{zhanghoujun@nju.edu.cn}


\dedicatory{Dedicated to Professor Bernhard Keller on the occasion of his 60th birthday}

\subjclass[2010]{16G20, 18E40, 18E30}

\date{}

\keywords{tau-tilting theory, silting theory, Bongartz completion, maximal green sequence, tau-tilting reduction}


\begin{abstract}
In this paper, we introduce relative left Bongartz completions for a given basic $\tau$-rigid pair $(U,Q)$ in $\mod A$. They give a family of basic $\tau$-tilting pairs containing  $(U,Q)$ as a direct summand. More precisely, to each basic $\tau$-tilting pair $(M,P)$ with $\Fac M\subseteq \prescript{\bot}{}{(\tau U)}\cap Q^\bot$, we
prove that $\Fac U*(\mathcal W\cap\Fac M)$ is a functorially finite torsion class in $\mod A$ such that $\Fac U\subseteq \Fac U*(\mathcal W\cap\Fac M) \subseteq \prescript{\bot}{}{(\tau U)}\cap Q^\bot$, where $\mathcal W:=U^\bot\cap\prescript{\bot}{}({\tau U})\cap Q^\bot$ is the wide subcategory associated to $(U,Q)$. Thus $\Fac U*(\mathcal W\cap\Fac M)$ corresponds to a basic $\tau$-tilting pair $(M^-,P^-)$ containing $(U,Q)$ as a direct summand. We call $(M^-,P^-)$ the left Bongartz completion (or Bongartz co-completion) of $(U,Q)$ with respect to $(M,P)$. Notice that $(M^-,P^-)$ coincides with the usual Bongartz co-completion of $(U,Q)$ in $\tau$-tilting theory if $(M,P)=(0,A)$.
 
 We prove that relative left Bongartz completions have nice 
 compatibility with mutations. Using this compatibility we are able to study the existence of maximal green sequences under $\tau$-tilting reduction. We also explain our construction and some of the results in the setting of silting theory.
\end{abstract}

\maketitle


\tableofcontents

\section{Introduction}
 $\tau$-tilting theory was introduced by Adachi, Iyama and Reiten \cite{adachi_iyama_reiten_2014}  to complete the classical tilting theory from the viewpoint of mutations. Partial tilting modules and tilting modules in classic tilting theory are enlarged to $\tau$-rigid pairs and $\tau$-tilting pairs in $\tau$-tilting theory. Tilting modules may not be able to do mutations in all directions, while $\tau$-tilting pairs can overcome this drawback. That is why it is interesting to study $\tau$-tilting theory.

 In classic tilting theory, Bongartz completion \cite{Bongartz_1980} is an operation to complete a partial tilting module $U$ to the `maximal' tilting module $T$ containing $U$ as a direct summand. Bongartz completion is naturally extended to $\tau$-tilting theory \cite{adachi_iyama_reiten_2014}, which completes a $\tau$-rigid pair $(U,Q)$ to the `maximal' $\tau$-tilting pair containing $(U,Q)$ as a direct summand. Dually, one can also complete $(U,Q)$ to the `minimal' $\tau$-tilting pair  containing $(U,Q)$ as a direct summand, which is called the Bongartz co-completion of $(U,Q)$. Notice that the `minimal' completion, i.e., Bongartz co-completion is not always possible in classic tilting theory. However, this is always possible in $\tau$-tilting theory. 
 
 In this paper, Bongartz completion is renamed by  absolute right Bongartz completion and Bongartz co-completion is renamed by absolute left Bongartz completion. We mainly focus on `left type'=`smaller type' completions. In the final part of this paper, we give some remarks on  `right type'=`bigger type' completions.

Jasso \cite{Jasso_2014} introduced $\tau$-tilting reduction in $\tau$-tilting theory in analogous to Calabi-Yau reduction \cite{Iyama-Yoshino_2008} in cluster tilting theory and silting reduction \cites{Aihara-Iyama_2012,Iyama-Yang_2018} in silting theory. Let $(U,Q)$ be a basic $\tau$-rigid pair in $\mod A$.
The key result of $\tau$-tilting reduction is that we can study the set of basic $\tau$-tilting pairs over $A$ that contain $(U,Q)$ as a direct summand using the set of all basic $\tau$-tilting pairs over a `smaller' algebra $A_{(U,Q)}$. One can see that in $\tau$-tilting reduction, we need to consider the set of basic $\tau$-tilting pairs that contain a particular $\tau$-rigid pair $(U,Q)$ as a direct summand. When the $\tau$-rigid pair $(U,Q)$ is given, the absolute left/right Bongartz completion can only give us the `minimal'/`maximal' $\tau$-tilting pair containing $(U,Q)$ as a direct summand. A natural question is the following:

{\em Question A: How can we get more $\tau$-tilting pairs containing $(U,Q)$ as a direct summand?}

One way is from the viewpoint of mutations. We start with the `minimal'/`maximal' $\tau$-tilting pair containing $(U,Q)$ as a direct summand and use mutations to get more $\tau$-tilting pairs containing $(U,Q)$ as a direct summand. The other way is from the viewpoint of completions. In this case, we need to deform the `absolute left/right Bongartz completion' so that we can get more $\tau$-tilting pairs containing $(U,Q)$ as a direct summand. The first step along this direction is given in \cite{cao-li-2020} in the setting of cluster algebras \cite{fz_2002}.

Cluster algebras were invented by Fomin and Zelevinsky \cite{fz_2002}  as a combinatorial approach to the dual canonical bases of quantized enveloping
algebras. Such algebras have distinguished generators called {\em cluster variables}, which are grouped into overlapping sets of fixed size called {\em clusters}. For the readers who are not familiar with cluster algebras, we propose that one can just view clusters as basic $\tau$-tilting pairs and subsets of clusters as basic $\tau$-rigid pairs.

 In cluster algebras, the first-named author and Li \cite{cao-li-2020} proved that for any cluster ${\bf x}_t$ and any subset $U$ of the `initial' cluster ${\bf x}_{t_0}$, there is a canonical method using $g$-vectors \cite{fomin_zelevinsky_2007} to complete $U$ to a cluster ${\bf x}_{t^\prime}$.
 Notice that there are two inputs here. One is a subset $U$ of the `initial' cluster and the other one is a cluster ${\bf x}_t$. The output is a cluster ${\bf x}_{t^\prime}$ containing $U$ as a subset. We can see when $U$ is fixed, for each cluster ${\bf x}_t$, we can get a completion ${\bf x}_{t^\prime}$ of $U$.

 In \cite{cao_2021}, the first-named author of this paper studied the above completions in cluster algebras from the perspective of cluster tilting theory, silting theory and $\tau$-tilting theory. It turns out that the corresponding completions in silting theory and $\tau$-tilting theory are somehow not natural because of the limitation that $U$ is a subset of the `initial cluster'. This limitation is not a problem in cluster algebras or cluster tilting theory, because we can choose any cluster or cluster tilting object as the initial one. However, this is not the case in silting theory or $\tau$-tilting theory.
 
 Let us explain the reasons in $\tau$-tilting theory. In  $\tau$-tilting theory, the $\tau$-tilting pair $(A,0)$ is always our canonial initial $\tau$-tilting pair. For any basic $\tau$-rigid pair $(U,0)\in\add (A,0)$ and any basic $\tau$-tilting pair $(M,P)$, the author of \cite{cao_2021} completed $(U,0)$ to a basic $\tau$-tilting pair $(M^\prime,P^\prime)$. Namely, when the basic $\tau$-rigid pair $(U,0)\in\add (A,0)$ is fixed,  for each basic  $\tau$-tilting pair $(M,P)$, we get a completion $(M^\prime,P^\prime)$ of $(U,0)$. One can see that the requirement $(U,0)\in\add (A,0)$ is somehow not natural. So  a natural question is the following:
 
 {\em Question B: Fix a general basic $\tau$-rigid pair $(U,Q)$, can we get a completion of $(U,Q)$ for each basic $\tau$-tilting pair $(M,P)$?}
 
 The main motivation of this paper is to study the above question. We found that for any basic $\tau$-tilting pair $(M,P)$ with $\Fac M\subseteq \prescript{\bot}{}({\tau U})\cap Q^\bot$, there does exist a natural completion of $(U,Q)$ with respect to $(M,P)$. This means when the basic $\tau$-rigid pair $(U,Q)$ is fixed, we can get a family of completions of $(U,Q)$. We call them {\em relative left Bongartz completions}.
 
 Now the new viewpoint is there should have two inputs for a completion. One is a basic $\tau$-rigid pair $(U,Q)$ and the other one is a basic $\tau$-tilting pair $(M,P)$ with $\Fac M\subseteq \prescript{\bot}{}({\tau U})\cap Q^\bot$. The role of $(M,P)$ here is to provide some deformation for the absolute left Bongartz completion. There are two very extreme cases such that the condition  $\Fac M\subseteq \prescript{\bot}{}({\tau U})\cap Q^\bot$ is automatically valid. The first one is to make $\Fac M=0$ and the other one is to make $\prescript{\bot}{}({\tau U})\cap Q^\bot=\mod A$. Let us explain them in more details.
 
 Case (i): Take $(M,P)=(0,A)$. Then $\Fac M=0$ and thus $\Fac M\subseteq \prescript{\bot}{}({\tau U})\cap Q^\bot$ is valid for any basic $\tau$-rigid pair $(U,Q)$. In this case, our completion will reduce to the absolute left Bongartz completion (i.e., Bongartz co-completion) in $\tau$-tilting theory. That is why in the absolute left Bongartz completion, we only have one input $(U,Q)$, because the other input is always $(0,A)$ which plays no role in defining the absolute left Bongartz completion.
 
 Case (ii) Take $(U,Q)\in\add (A,0)$. Then $\prescript{\bot}{}({\tau U})\cap Q^\bot=\mod A$ and thus $\Fac M\subseteq \prescript{\bot}{}({\tau U})\cap Q^\bot$ is valid for any $\tau$-tilting pair $(M,P)$. In this case, our completion will reduce to the case in \cite{cao_2021}.
 
 The relative left Bongartz completions in this paper unify the two very extreme cases above and include many other cases. Now we explain why relative left Bongartz completions are interesting. The key reason is that relative left Bongartz completions have nice compatibility with mutations, cf., Theorem \ref{mainthm}. Using this compatibility, we are able to study the existence of maximal green sequences under $\tau$-tilting reduction, cf., Theorem \ref{mainthm2}. The existence of such sequences for finite dimensional Jacobian algebras \cite{DWZ_2008} has  important consequences in cluster algebras, cf., \cite{keller_demonet_2020}.
 
 We also introduce relative left Bongartz completions in the setting of silting theory and prove that  relative left Bongartz completions have nice compatibility with mutations in silting theory, cf., Theorem \ref{mainthm3}. As everyone has already predicted, we prove that the relative left Bongartz completions in silting theory and $\tau$-tilting theory are compatible, cf., Theorem \ref{mainthm4}.
 
 There are two reasons for us to introduce relative left Bongartz completions in silting theory. The first one is to make our theory more complete, because silting theory is closely related with $\tau$-tilting theory. The other one is to make the readers have a better understanding of relative left Bongartz completions from different viewpoints.

 This paper is organized as follows: Section \ref{sec:2} is preliminaries on $\tau$-tilting theory and silting theory. In Section 
 \ref{sec:3}, we define and construct relative left Bongartz completions in $\tau$-tilting theory.  We study the compatibility between relative left Bongartz completions and mutations and use the compatibility to study the existence of maximal green sequences under $\tau$-tilting reduction.   Section \ref{sec:4} serves Section \ref{sec:3} in order to make the readers have an understanding of relative left Bongartz completions from the viewpoint of silting theory. In Section \ref{sec:5}, we give remarks on relative right Bongartz completions in $\tau$-tilting theory and silting theory.

{\em Convention:} In what follows, $K$ always denotes a field. Subcategories in this paper are always full subcategories and closed under isomorphisms. Given two morphisms $f:X\rightarrow Y$ and $g:Y\rightarrow Z$ in some category $\mathcal C$, we denote their composition by $g\circ f=gf$. Given a subcategory $\mathcal X$ of $\mathcal C$, we denote by \begin{eqnarray}
 \mathcal X^\bot&:=&\{C\in \mathcal C| \Hom_{\mathcal C}(\mathcal X, C)=0 \},\nonumber\\
  \prescript{\bot}{}{\mathcal X}&:=&\{C\in \mathcal C  | \Hom_{\mathcal C}(C,\mathcal X)=0 \}.\nonumber
\end{eqnarray}

 {\bf Acknowledgement.} 
 P. Cao is very grateful to Professor David Kazhdan for providing him with a very comfortable working environment and to Yafit-Laetitia Sarfati for her much help in campus life. P. Cao is supported by the European Research Council Grant No. 669655, the National Natural Science Foundation of China Grant No. 12071422, and the Guangdong Basic and Applied Basic Research Foundation Grant No. 2021A1515012035. H. Zhang is supported by Natural Science Research Start-up Foundation
of Recruiting Talents of Nanjing University of Posts and Telecommunications Grant No.
NY222092.

 \section{Preliminaries}\label{sec:2}

\subsection{Approximations in  categories}
Let $\mathcal C$ be a $K$-linear, Krull-Schmidt, additive category and $\mathcal X$ a full subcategory of $\mathcal C$.
Let $C$ be an object in $\mathcal C$. A  {\em right $\mathcal X$-approximation} of $C$ is a  morphism $f:X\rightarrow C$ with $X\in\mathcal X$ such that  for every $X^\prime\in\mathcal X$ 
  the following sequence
  $$\xymatrix{\Hom_{\mathcal C}(X^\prime, X)\ar[rr]^{\Hom_{\mathcal C}(X^\prime,f)}&&\Hom_{\mathcal C}(X^\prime, C)\ar[r]&0}$$
is exact. Dually, a {\em left $\mathcal X$-approximation} of $C$ can be defined.

A subcategory $\mathcal X$ of $\mathcal C$ is said to be {\em contravariantly finite}
in $\mathcal C$ if any object $C$ of $\mathcal C$ has a right $\mathcal X$-approximation.  Dually, we can define {\em covariantly
finite subcategories} in $\mathcal C$. 
A subcategory of $\mathcal C$ is
said to be {\em functorially finite} in $\mathcal C$ if it is both contravariantly and covariantly finite in $\mathcal C$.

We say that a morphism $f:X\rightarrow Y$ is {\em right minimal} if it does not have a direct summand of the form $f_1:X_1 \rightarrow 0$. Dually, we can define {\em left minimal morphisms}.

\subsection{$\tau$-tilting theory}

In this subsection, we recall $\tau$-tilting theory \cite{adachi_iyama_reiten_2014}, which completes the classic tilting theory from the viewpoint of mutations. 

 We fix a finite dimensional basic algebra $A$ over  $K$.  Denote by $\mod A$ the category of finitely generated right $A$-modules, and by $\tau$ the Auslander-Reiten translation in $\mod A$. 

Let $\mathcal C$ be a full subcategory of $\mod A$. There are some full subcategories that are closely related to $\mathcal C$. They are given as follows:
\begin{itemize}
\item $\add\mathcal C$ is the additive closure of $\mathcal C$ in $\mod A$;
\item $\Fac\mathcal C$ consists of the factor modules of the modules in $\add\mathcal C$;
\item $\Sub \mathcal C$ consists of the  submodules of the modules in $\add\mathcal C$;
\item $\Filt \mathcal C$ is the smallest full subcategory of
$\mod A$ containing $\mathcal C$ and closed under extensions.
\end{itemize}

\subsubsection{$\tau$-tilting pair}
Given a module $M\in\mod A$. We denote by $|M|$ the number of non-isomorphic indecomposable direct summands of $M$. We say that $M$ is  \emph{$\tau$-rigid} if $\Hom_A(M,\tau M)=0$.

\begin{definition}[$\tau$-rigid and $\tau$-tilting pair]
(i) Let $M$ be a module in $\mod A$ and $P$ a projective module in $\mod A$. The pair $(M,P)$ is called \emph{$\tau$-rigid} if $M$ is $\tau$-rigid and $\Hom_A(P,M)=0$.

(ii) A $\tau$-rigid pair $(M,P)$ is called \emph{$\tau$-tilting} (resp., \emph{almost $\tau$-tilting}) if $|M|+|P|=|A|$ (resp., $|M|+|P|=|A|-1$).
\end{definition}
Throughout this paper, we always consider $\tau$-rigid pairs up to isomorphisms.

A $\tau$-rigid pair $(M,P)$ is \emph{indecomposable} if $M\oplus P$ is indecomposable in $\mod A$ and it is \emph{basic} if both $M$ and $P$ are basic in $\mod A$. 
A module $M\in \mod A$ is called a \emph{support $\tau$-tilting module} if there exists a basic projective module $P$ such that $(M,P)$ is a $\tau$-tilting pair. In fact, such a basic projective module $P$ is unique up to isomorphisms \cite{adachi_iyama_reiten_2014}*{Proposition 2.3}.

Let $(M,P)$ and $(U,Q)$ be two $\tau$-rigid pairs in $\mod A$. We say that $(U,Q)$ is a {\em direct summand} of $(M,P)$, if there exists another  $\tau$-rigid pair $(U',Q')$ such that $M\cong U\oplus U'$ and $P\cong Q\oplus Q'$.

\begin{theorem}[\cite{adachi_iyama_reiten_2014}*{Theorem 2.12 and Theorem 2.18}] \label{thmair}
Let $(U,Q)$ be a basic almost $\tau$-tilting pair in $\mod A$. Then $\Fac U\subsetneq \prescript{\bot}{}{(\tau U)}\cap Q^\bot$ and
there exist exactly two basic
$\tau$-tilting pairs $(M,P)$ and $(M^\prime,P^\prime)$ containing $(U,Q)$ as a direct summand. Moreover,
$$\{\Fac M,\Fac M^\prime\}=\{\Fac U,\prescript{\bot}{}{(\tau U)}\cap Q^\bot\}.$$
In particular, either $\Fac M\subsetneq \Fac M^\prime$ or
$\Fac M^\prime\subsetneq\Fac M$ holds.
\end{theorem}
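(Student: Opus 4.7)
The plan is to exploit the Adachi-Iyama-Reiten bijection between basic support $\tau$-tilting pairs in $\mod A$ and functorially finite torsion classes in $\mod A$, sending $(M,P) \mapsto \Fac M$, together with the fact that under this bijection the basic $\tau$-tilting pairs containing a given basic $\tau$-rigid pair $(U,Q)$ as a direct summand correspond to functorially finite torsion classes in the interval $[\Fac U,\ \prescript{\bot}{}{(\tau U)}\cap Q^\bot]$.

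First I would verify $\Fac U \subseteq \prescript{\bot}{}{(\tau U)} \cap Q^\bot$ for any $\tau$-rigid pair $(U,Q)$. The inclusion $\Fac U \subseteq \prescript{\bot}{}{(\tau U)}$ is a standard consequence of $\tau$-rigidity (Auslander-Reiten formula plus the fact that vanishing of $\Hom_A(-,\tau U)$ is inherited by quotients in the first variable), while $\Fac U \subseteq Q^\bot$ follows from projectivity of $Q$: an epi $U^n \twoheadrightarrow X$ yields an epi on $\Hom_A(Q,-)$, whose source vanishes by the definition of a $\tau$-rigid pair. Then I would invoke the existence of Bongartz completion and of Bongartz co-completion to produce basic $\tau$-tilting pairs $(M',P')$ and $(M,P)$ containing $(U,Q)$ as direct summands with $\Fac M' = \prescript{\bot}{}{(\tau U)}\cap Q^\bot$ and $\Fac M = \Fac U$ respectively.

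To upgrade the inclusion to $\Fac U \subsetneq \prescript{\bot}{}{(\tau U)}\cap Q^\bot$, I would argue that equality would force $(M,P) \cong (M',P')$, since $\Fac$ determines the pair; but the almost $\tau$-tilting assumption makes any $\tau$-tilting completion of $(U,Q)$ differ from $(U,Q)$ by a single indecomposable summand, and I would exhibit two genuinely distinct choices of that summand via mutation. Concretely, starting from any completion $(N,R)$ with the extra indecomposable summand $X$, one forgets $X$ and performs left (or right) mutation at $X$ while keeping $(U,Q)$ fixed, producing a second completion; the mutation is well-defined because $(U,Q)$ remains $\tau$-rigid, and it yields exactly one alternative summand.

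The main obstacle is the uniqueness assertion (\emph{exactly} two completions), which in the sketch above rests on the mutation theory for $\tau$-tilting pairs and on showing that the two torsion classes produced are the only functorially finite members of the interval. A cleaner route is via $\tau$-tilting reduction (developed later by Jasso): basic $\tau$-tilting completions of $(U,Q)$ biject with basic support $\tau$-tilting modules over a reduced algebra whose number of simples equals $|A|-|U|-|Q|=1$ by the almost $\tau$-tilting hypothesis; such a local algebra has exactly two basic support $\tau$-tilting modules, namely $0$ and the algebra itself, which pull back to the two completions with torsion classes $\Fac U$ and $\prescript{\bot}{}{(\tau U)}\cap Q^\bot$, forcing the strict containment and hence the final dichotomy $\Fac M \subsetneq \Fac M'$ or $\Fac M' \subsetneq \Fac M$.
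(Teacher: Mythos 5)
This statement is not proved in the paper at all: it is imported verbatim from Adachi--Iyama--Reiten (their Theorems 2.12 and 2.18), so the only comparison available is with the original AIR argument, which is a direct, hands-on one (Bongartz-type completion plus an explicit analysis of complements of an almost complete pair), since $\tau$-tilting reduction did not yet exist. Your route is genuinely different and, in its final form, correct: the inclusion $\Fac U\subseteq \prescript{\bot}{}{(\tau U)}\cap Q^\bot$ is verified exactly as you say; Bongartz completion and co-completion (Theorem \ref{proorder}, Proposition \ref{prominmax}) give two completions realizing the two endpoints; and Jasso's reduction (Theorems \ref{thm:Jasso} and \ref{thm:taureduction} in this paper) identifies the interval of functorially finite torsion classes between $\Fac U$ and $\prescript{\bot}{}{(\tau U)}\cap Q^\bot$ with $\ftors(A_{(U,Q)})$, where $A_{(U,Q)}$ is basic with exactly $|A|-|U|-|Q|=1$ simple module, hence has exactly the two torsion classes $0$ and $\mod A_{(U,Q)}$ (every module is filtered by the unique simple), which forces both the strictness and the ``exactly two'' statement. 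What this buys is brevity and a conceptual explanation of the dichotomy; what it costs is heavier machinery and an obligation to check non-circularity: Jasso's equivalence $\mathcal W\simeq \mod A_{(U,Q)}$ and the torsion-class bijection rely on AIR's bijection with functorially finite torsion classes, on Proposition \ref{prominmax} and on Bongartz completion, but not on the theorem you are proving, so the argument is legitimate, though anachronistic relative to AIR.

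One part of your write-up should be discarded rather than kept as a fallback: the paragraph that produces a ``second completion via left (or right) mutation at $X$'' is circular, because mutation of $\tau$-tilting pairs is \emph{defined} by the existence and uniqueness of the second completion, i.e.\ by the very statement at issue; you flag the difficulty yourself, and the reduction argument you give afterwards is the actual proof, so the theorem should be derived from that route alone (or simply cited, as the paper does).
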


\begin{definition}[Left and right mutation]
Keep the notations in Theorem \ref{thmair}. The operation $(M,P)\mapsto(M^\prime,P^\prime)$ is called a \emph{mutation} of $(M,P)$. If $\Fac M\subsetneq \Fac M^\prime$ holds, we call $(M^\prime, P^\prime)$ a {\em right mutation} of $(M,P)$. If $\Fac M^\prime \subsetneq \Fac M$ holds, we call $(M^\prime, P^\prime)$ a {\em left mutation} of $(M,P)$.
\end{definition}

\begin{remark}
We remark that one can look at the sequence of integers $\ldots,-2,-1,0,1,2,\ldots$. Going left means becoming smaller and going right means becoming bigger. Then one can easily remember the difference between left and right mutation. This also applies to the left and right Bongartz completion defined later. Another reason using the word `left' is that `left type' completions are constructed using left approximations in the setting of silting theory, cf., Section \ref{sec:4}. 
\end{remark}

\subsubsection{Functorially finite torsion classes}

Let $\mathcal A$ be an abelian category. For example, $\mathcal A=\mod A$ or we take $\mathcal A$ some {\em wide subcategory} of $\mod A$, which is a full subcategory
of $\mod A$ closed under kernels, cokernels and extensions. A \emph{torsion pair} $(\mathcal{T},\mathcal{F})$ in  $\mathcal A$ is a pair of subcategories of $\mathcal A$ satisfying that
\begin{itemize}
\item[(i)] $\Hom_{\mathcal A}(T,F)=0$ for any $T\in \mathcal{T}$ and $F\in \mathcal{F}$;
\item[(ii)] for any object $X\in \mathcal A$, there exists a short exact sequence
 $$0\rightarrow X_t\rightarrow X\rightarrow X_f\rightarrow0$$ with $X_t\in\mathcal{T}$ and $X_f\in \mathcal{F}$. Thanks to the condition (i), such a sequence is unique up to isomorphisms. This short exact sequence is called the \emph{canonical sequence} of $X$ with respect to $(\mathcal{T},\mathcal{F})$.
\end{itemize}

Notice that in a torsion pair  $(\mathcal T,\mathcal F)$, we always have $\mathcal F=\mathcal T^\bot$ and $\mathcal T= \prescript{\bot}{}{\mathcal F}$.

The subcategory $\mathcal T$ (resp., $\mathcal F$) in a torsion pair $(\mathcal T,\mathcal F)$ is called a \emph{torsion class} (resp., \emph{torsion-free class}) in $\mathcal A$. Let $\mathcal C$ be a full subcategory of $\mathcal A$. It is well known that $\mathcal C$ is a torsion class if and only if it is closed under extensions and epimorphisms in $\mathcal A$ and $\mathcal C$ is a torsion-free class if and only if it is closed under extensions and monomorphisms in $\mathcal A$.

Using the definition of torsion pair, one can easily show that a torsion class is always contravariantly finite in  $\mathcal A$ and a torsion-free class is always covariantly finite in $\mathcal A$.
A torsion class $\mathcal T$ (respectively, a torsion-free class $\mathcal F$) in $\mathcal A$ is said to be \emph{functorially finite} if it is a functorially finite subcategory of $\mathcal A$.

Now we focus on the case $\mathcal A=\mod A$. Let $\mathcal C$ be a subcategory of $\mod A$. A module $M\in\mathcal C$ is said to be {\em Ext-projective} in $\mathcal C$, if $\Ext_A^1(M,\mathcal C)=0$.  Auslander-Smalø \cite{AS_1981} give a sufficient condition on $\mathcal C$ such that it has only finitely many indecomposable Ext-projective modules up to isomorphisms.

\begin{proposition} [\cite{AS_1981}*{Corollary 4.4}] Suppose that $\mathcal C$ is a subcategory of $\mod A$ closed under extensions and there exists $M\in\mathcal C$ such that $\mathcal C=\Fac M$. Then there are only finitely many indecomposable Ext-projective modules in $\mathcal C$ up to isomorphisms.  
\end{proposition}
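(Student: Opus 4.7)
My plan is to exhibit a single finitely generated module $\widetilde M \in \mathcal C$ whose indecomposable direct summands are, up to isomorphism, precisely the indecomposable Ext-projective modules of $\mathcal C$. Since $\widetilde M$ has only finitely many non-isomorphic indecomposable direct summands, this will give the result.

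First I would observe that the hypotheses force $\mathcal C = \Fac M$ to be a functorially finite torsion class in $\mod A$: closure under quotients is built into $\Fac M$; closure under extensions is assumed; contravariant finiteness follows because right $\add M$-approximations are right $\mathcal C$-approximations; and covariant finiteness is automatic for torsion classes.

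Second, I would build $\widetilde M$ via a Bongartz-style universal extension starting from $M$. Let $\eta_1, \ldots, \eta_s$ be a $K$-basis of the finite-dimensional space $\Ext_A^1(M, M)$ and form the universal extension $0 \to M^{\oplus s} \to \widetilde M \to M \to 0$ classified by $(\eta_1, \ldots, \eta_s) \in \Ext_A^1(M, M^{\oplus s})$. A diagram chase with the long exact sequence of $\Hom_A(-, M)$ yields $\Ext_A^1(\widetilde M, M) = 0$. Using closure of $\mathcal C$ under extensions one gets $\widetilde M \in \mathcal C$ and $\Fac \widetilde M = \mathcal C$. To upgrade $\Ext_A^1(\widetilde M, M) = 0$ to $\Ext_A^1(\widetilde M, L) = 0$ for every $L \in \mathcal C$, I would resolve $L$ by a short exact sequence $0 \to L' \to M^{\oplus r} \to L \to 0$; the key point is that the extension-closure of $\mathcal C$ together with minimality can be used to keep $L' \in \mathcal C$, so the long exact sequence of $\Hom_A(\widetilde M, -)$ and induction on a suitable dimension invariant propagate the Ext-vanishing.

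Finally, given an indecomposable Ext-projective $N \in \mathcal C$, I would take a minimal right $\add \widetilde M$-approximation $f: \widetilde M_0 \to N$; it is surjective because $N \in \Fac \widetilde M$. Applying $\Hom_A(\widetilde M, -)$ to $0 \to \ker f \to \widetilde M_0 \to N \to 0$ and using the approximation property gives $\Ext_A^1(\widetilde M, \ker f) = 0$, and then decomposing $\ker f$ via its torsion/torsion-free canonical sequence with respect to $(\mathcal C, \mathcal C^\bot)$ forces $\ker f \in \mathcal C$. The Ext-projectivity of $N$ then splits $f$, exhibiting $N$ as a direct summand of $\widetilde M_0 \in \add \widetilde M$, hence of $\widetilde M$. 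The principal obstacle is the propagation step in the construction of $\widetilde M$: extending the Ext-vanishing against $M$ to the whole torsion class $\mathcal C$ requires a delicate interplay between the extension-closure hypothesis and the existence of sufficiently controlled resolutions inside $\mathcal C$, and is where the combinatorial content of the proof is concentrated.
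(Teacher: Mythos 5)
Your construction breaks at its very first step. Applying $\Hom_A(-,M)$ to the universal extension $0 \to M^{\oplus s} \to \widetilde M \to M \to 0$ gives the exact sequence $\Hom_A(M^{\oplus s},M) \xrightarrow{\delta} \Ext^1_A(M,M) \to \Ext^1_A(\widetilde M,M) \to \Ext^1_A(M^{\oplus s},M)$, and surjectivity of the connecting map $\delta$ (which is what your choice of a basis buys you) only shows that the map $\Ext^1_A(M,M)\to\Ext^1_A(\widetilde M,M)$ is zero; it leaves $\Ext^1_A(\widetilde M,M)$ embedded in $\Ext^1_A(M^{\oplus s},M)\cong \Ext^1_A(M,M)^{\oplus s}$, which is nonzero precisely in the situation you care about. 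The classical Bongartz chase works because the third term vanishes thanks to rigidity ($\Ext^1_A(U,U)=0$ for a partial tilting module $U$); here $M$ is an arbitrary generator of the torsion class and has no rigidity, so $\Ext^1_A(\widetilde M,M)=0$ is unproved, and iterating the construction does not help since each new extension reintroduces copies of $M$ in the subobject position. The propagation step is also based on a false claim: a torsion class is not closed under kernels of epimorphisms, and minimality does not save you. Concretely, for $A$ the path algebra of $1\to 2\to 3$ and $\mathcal C=\Fac P_1$ (a torsion class, as $P_1$ is projective hence $\tau$-rigid), every epimorphism $P_1^{\oplus r}\to S_1$ has a kernel with $S_2$ in its top, hence not in $\Fac P_1$; so no short exact sequence $0\to L'\to M^{\oplus r}\to L\to 0$ with $L'\in\mathcal C$ exists for $L=S_1$. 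You yourself flag this as the delicate point, but it is not delicate—it is false as stated, so the Ext-vanishing never propagates. The final step has the same defect: splitting the minimal right $\add\widetilde M$-approximation of $N$ requires $\ker f\in\mathcal C$, and Wakamatsu-type vanishing $\Ext^1_A(\widetilde M,\ker f)=0$ does not place $\ker f$ in $\mathcal C$. (A smaller slip: you have the two finiteness conditions swapped—torsion classes are automatically contravariantly finite, while covariant finiteness of $\Fac M$ is a nontrivial Auslander--Smal\o{} result, not automatic.)

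For comparison: the paper gives no proof of this proposition—it is quoted from Auslander--Smal\o{}—and the standard arguments run along genuinely different lines than yours. One route characterizes the Ext-projective objects of $\Fac M$ as the $X\in\Fac M$ with $\Hom_A(M,\tau X)=0$; since every module in $\Fac M$ is generated by $M$, the direct sum of any collection of pairwise non-isomorphic indecomposable Ext-projectives is then $\tau$-rigid, and such modules have at most $|A|$ indecomposable summands. Another route takes a minimal left $\Fac M$-approximation $A\to T^0$ of the regular module and shows, via projectivity of $A$, minimality, Wakamatsu's lemma and a pushout argument, that every Ext-projective object of $\Fac M$ lies in $\add(T^0\oplus \operatorname{\mathsf{coker}})$. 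Either of these avoids universal extensions of the non-rigid module $M$, which is where your approach cannot be repaired.
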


Keep the assumptions in the above proposition. We denote by $\mathcal P(\mathcal C)$ the direct sum of one copy of each of the indecomposable Ext-projective objects in $\mathcal C$ up to isomorphisms. Notice that $\mathcal P(\mathcal C)$ still belongs to $\mathcal C$, thanks to the above proposition.

\begin{theorem}[\cite{adachi_iyama_reiten_2014}*{{Proposition 1.2 (b) and Theorem 2.7}}]\label{proorder} The following statements hold.
\begin{itemize}
    \item [(i)] There is a well-defined map $\Psi$ 
from $\tau$-rigid pairs to functorially finite torsion classes in $\mod A$
given by $(M,P)\mapsto \Fac M$.

\item[(ii)] The above map $\Psi$ is a bijection if we restrict it to basic $\tau$-tilting pairs, which induces a natural order on the set of basic $\tau$-tilting pairs 
$$(M,P)\leq(M^\prime,P^\prime)\overset{{\rm def.}}{\Longleftrightarrow}\Fac M\subseteq\Fac M^\prime.$$

\item[(iii)] Let $\mathcal T$ be a functorially finite torsion class and denote by $(M,P)=\Psi^{-1}(\mathcal T)$ the basic $\tau$-tilting pair corresponding to $\mathcal T$. Then  $M=\mathcal P(\mathcal T)$.
\end{itemize}
\end{theorem}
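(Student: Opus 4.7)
The subcategory $\Fac M$ is automatically closed under factor modules and finite direct sums, so the content of (i) is that it is closed under extensions and is functorially finite in $\mod A$. My plan is to first use the Auslander--Reiten formula $\Ext^1_A(M,X) \cong D\overline{\Hom}_A(X,\tau M)$ together with $\tau$-rigidity $\Hom_A(M,\tau M)=0$ to deduce $\Ext^1_A(M,\Fac M)=0$. Given any short exact sequence $0\to X\to Y\to Z\to 0$ with $X,Z\in\Fac M$, I would then pick surjections $M^n\twoheadrightarrow X$ and $M^m\twoheadrightarrow Z$, lift the latter to a map $M^m\to Y$ using $\Ext^1_A(M^m,X)=0$, and combine with the former to get a surjection $M^{n+m}\twoheadrightarrow Y$; this forces $Y\in\Fac M$. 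Functorial finiteness is then routine: the trace map $M^{\Hom_A(M,X)}\to X$ gives a right $\Fac M$-approximation of any $X\in\mod A$, and covariant finiteness is automatic for any torsion class via canonical sequences.

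\textbf{Part (iii) and injectivity of $\Psi$ in (ii).} Assume $(M,P)$ is basic $\tau$-tilting and set $\mathcal T=\Fac M$. The vanishing $\Ext^1_A(M,\Fac M)=0$ from Part (i) immediately gives $\add M\subseteq\add\mathcal P(\mathcal T)$. For the reverse inclusion my plan is: any indecomposable Ext-projective $N\in\mathcal T\setminus\add M$ together with $(M,P)$ would form a $\tau$-rigid pair of total size strictly greater than $|M|+|P|=|A|$, which I would rule out by restricting along the support idempotent $e$ of $P$ to the quotient algebra $A/\langle e\rangle$ and invoking the upper bound $|\mathcal P(\mathcal T)|\le|A/\langle e\rangle|$ coming from the Auslander--Smal\o\ count. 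This gives $M=\mathcal P(\mathcal T)$, which is precisely (iii); and since $P$ is forced to be the unique basic projective with $\Hom_A(P,\mathcal T)=0$, the pair $(M,P)$ is recovered from $\mathcal T$, proving injectivity of $\Psi$.

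\textbf{Surjectivity of $\Psi$ in (ii), and the order.} Given a functorially finite torsion class $\mathcal T$, I would take $M:=\mathcal P(\mathcal T)$ (a finite basic module by Auslander--Smal\o) and $P$ the basic projective with $\Hom_A(P,\mathcal T)=0$. $\tau$-rigidity of $M$ follows from Ext-projectivity in $\mathcal T$ combined with the Auslander--Reiten formula, and the identity $\Fac M=\mathcal T$ follows from the fact that the right $\add M$-approximation of any $T\in\mathcal T$ is surjective (using contravariant finiteness and extension-closure of $\mathcal T$ together with $M=\mathcal P(\mathcal T)$). The main obstacle, and the crux of the whole theorem, is the count $|M|+|P|=|A|$. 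My plan here is an induction on the poset of functorially finite torsion classes via the mutation machinery of Theorem \ref{thmair}: the base case $\mathcal T=\mod A$ corresponds to $(A,0)$ with $|A|+0=|A|$; any left mutation provided by Theorem \ref{thmair} replaces a single indecomposable summand (possibly trading an indecomposable between the $M$-part and the $P$-part) and hence preserves $|M|+|P|$; and one reaches an arbitrary functorially finite torsion class from $\mod A$ by a finite descending chain of such left mutations. Finally, the natural order on basic $\tau$-tilting pairs is simply the transport of the inclusion order on functorially finite torsion classes along the bijection, so no further argument is needed.
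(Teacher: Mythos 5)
The paper does not actually prove this statement (it is quoted from Adachi--Iyama--Reiten), so your proposal has to stand on its own, and it contains two genuine gaps. The first is in (i): the nontrivial content of functorial finiteness of $\Fac M$ is \emph{covariant} finiteness, and your justification for it is false. Canonical sequences make a torsion class \emph{contravariantly} finite (the inclusion of the torsion submodule is a right approximation) and a torsion-free class covariantly finite, exactly as recorded in the paper's preliminaries; if covariant finiteness were automatic for torsion classes, every torsion class would be functorially finite, contradicting the existence of $\tau$-tilting infinite algebras (cf.\ Theorem \ref{thm:dij1.2}). Moreover, the map $M^{\Hom_A(M,X)}\to X$ you call the trace map is not in general a right $\Fac M$-approximation (try $A=K[x]/(x^2)$, $M=A$, $X$ the simple module); the correct right approximation is the inclusion of the trace submodule. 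What is really needed at this point is the Auslander--Smal{\o}/Smal{\o} theorem that a torsion class of the form $\Fac M$ is functorially finite, whose proof genuinely constructs left $\Fac M$-approximations (e.g.\ via left approximations of a projective presentation and a Wakamatsu-type lemma); this step cannot be declared routine.

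The second gap is fatal to surjectivity in (ii), which you correctly identify as the crux. Your induction assumes every functorially finite torsion class is reached from $(A,0)$ by a finite descending chain of left mutations, and this fails as soon as $A$ is $\tau$-tilting infinite. For the Kronecker algebra, every sequence of left mutations starting at $(A,0)$ either runs forever down the preprojective tilting modules or passes through $(S_2,P_1)$ to $(0,A)$ and stops (here $S_2$ is the simple projective); the functorially finite torsion classes on the preinjective side, such as $\Fac S_1$ for the simple injective $S_1$ or $\Fac(I_1\oplus I_2)$, are never reached going downwards, so your induction never certifies $|\mathcal P(\mathcal T)|+|P|=|A|$ for them. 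The count for an arbitrary functorially finite $\mathcal T$ must be proved directly (in AIR it comes from Bongartz completion together with the bound $|U|+|Q|\le |A|$ for $\tau$-rigid pairs), not from mutation-reachability. Relatedly, in (iii) the bound $|\mathcal P(\mathcal T)|\le |A/\langle e\rangle|$ that you attribute to ``the Auslander--Smal{\o} count'' is not what Auslander--Smal{\o} prove (their Corollary 4.4 gives only finiteness); it is essentially the same nontrivial bound on $\tau$-rigid pairs, so injectivity and (iii) also rest on an unproved input, even though the skeleton $\add M\subseteq\add\mathcal P(\mathcal T)$ plus maximality of $\tau$-rigid pairs is the standard and correct route once that bound is available.
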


\begin{proposition}[\cite{adachi_iyama_reiten_2014}*{Proposition 2.9 and Theorem 2.10}]\label{prominmax}
 Let $(U,Q)$ be a basic $\tau$-rigid pair and  $\mathcal T$ a functorially finite torsion class in $\mod A$. Then
 \begin{itemize}
 \item[(i)]  $\Fac U$ and  $\prescript{\bot}{}({\tau U})\cap Q^\bot$ are functorially finite torsion classes in $\mod A$;
 \item[(ii)]  $\Fac U\subseteq \mathcal T\subseteq \prescript{\bot}{}({\tau U})\cap Q^\bot$ if and only if $(U,Q)$ is a direct summand of the basic $\tau$-tilting pair $\Psi^{-1}(\mathcal T)$, where $\Psi$ is the bijection given in Theorem \ref{proorder}.
 \end{itemize}
\end{proposition}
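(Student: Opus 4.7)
The plan is to handle (i) and (ii) separately. For (i), I would use standard torsion-class closure arguments together with the Auslander--Reiten formula $\Ext_A^1(U,X)\cong D\overline{\Hom}_A(X,\tau U)$, where $\overline{\Hom}$ denotes morphisms modulo those factoring through an injective. For (ii), I would transfer between summand relations of the pair $(U,Q)$ and inclusion relations of torsion classes, combining the description $M=\mathcal{P}(\mathcal{T})$ from Theorem~\ref{proorder}(iii) with Ext-projectivity.

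For (i), closure of $\Fac U$ under quotients is tautological, while closure under extensions follows from $\tau$-rigidity of $U$ via the AR formula: for $N\in\Fac U$ one obtains $\Ext_A^1(U,N)=0$, so in a short exact sequence $0\to L\to M\to N\to 0$ with $L,N\in\Fac U$ a surjection $U^n\twoheadrightarrow N$ lifts through $M$ and combines with a surjection onto $L$ to give a surjection $U^{n+m}\twoheadrightarrow M$. Contravariant finiteness of $\Fac U$ is immediate from surjections $U^n\to M$, and covariant finiteness follows because $\Fac U$ is a torsion class generated by a single module. For $\prescript{\bot}{}(\tau U)\cap Q^\bot$, closure under quotients and extensions is standard (projectivity of $Q$ handles $Q^\bot$, and left exactness of $\Hom_A(-,\tau U)$ handles $\prescript{\bot}{}(\tau U)$); functorial finiteness I would obtain either via explicit approximations built from the finite direct sums $\tau U$ and $Q$, or more uniformly by first producing a Bongartz-type completion of $(U,Q)$ and invoking Theorem~\ref{proorder}(i).

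For (ii), the forward direction is short. Suppose $(U,Q)$ is a direct summand of $(M,P):=\Psi^{-1}(\mathcal{T})$, with $M=U\oplus U'$ and $P=Q\oplus Q'$. Then $\tau$-rigidity of $(M,P)$ gives $\Hom_A(M,\tau U)=0$ and $\Hom_A(Q,M)=0$, and since both $\prescript{\bot}{}(\tau U)$ and $Q^\bot$ are closed under quotients we conclude $\mathcal{T}=\Fac M\subseteq\prescript{\bot}{}(\tau U)\cap Q^\bot$; the inclusion $\Fac U\subseteq\Fac M=\mathcal{T}$ is trivial. Conversely, assume $\Fac U\subseteq\mathcal{T}\subseteq\prescript{\bot}{}(\tau U)\cap Q^\bot$ and set $(M,P):=\Psi^{-1}(\mathcal{T})$, so $M=\mathcal{P}(\mathcal{T})$ by Theorem~\ref{proorder}(iii). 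The AR formula yields $\Ext_A^1(U,X)=0$ for every $X\in\mathcal{T}$, so each indecomposable summand of $U\in\mathcal{T}$ is Ext-projective in $\mathcal{T}$ and hence a summand of $M$. On the projective side, $M\in\mathcal{T}\subseteq Q^\bot$ forces $\Hom_A(Q,M)=0$, and the characterization of $P$ in $\Psi^{-1}(\mathcal{T})$ as the maximal basic projective annihilating $M$ (forced by $|M|+|P|=|A|$ together with $\Hom_A(P,M)=0$) then identifies $Q$ as a summand of $P$.

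The main obstacle is the last step: pinning down $Q$ as a summand of $P$ requires a concrete handle on the projective part of $\Psi^{-1}(\mathcal{T})$ --- namely that $P$ is the sum of the indecomposable projectives $P_i$ with $\Hom_A(P_i,M)=0$ --- which is implicit but not highlighted in Theorem~\ref{proorder}, and must be justified so as not to circularly invoke the proposition we are proving. A secondary concern is the covariant finiteness of $\prescript{\bot}{}(\tau U)\cap Q^\bot$ in (i); the cleanest route is probably to construct, independently of (ii), a Bongartz-type completion of $(U,Q)$ whose $\Fac$ is this category, and then transport functorial finiteness via Theorem~\ref{proorder}(i).
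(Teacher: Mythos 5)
The paper itself does not prove this proposition: it is imported verbatim from Adachi--Iyama--Reiten (their Proposition 2.9 and Theorem 2.10), so the only meaningful comparison is with the AIR arguments, which your outline essentially tracks. The portions you actually carry out are correct: closure of $\Fac U$ under extensions via $\Ext_A^1(U,\Fac U)=0$ and the AR formula, Smalø's criterion (a torsion class generated by a single module is functorially finite) for $\Fac U$, the torsion-class axioms for $\prescript{\bot}{}{(\tau U)}\cap Q^\bot$, the forward implication of (ii), and the deduction that $U$ is a direct summand of $M=\mathcal P(\mathcal T)$ from Ext-projectivity combined with Theorem \ref{proorder}(iii).

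The difficulty is that the two steps you defer are exactly where the content of the cited results lies, so as written the proposal reduces the proposition to the AIR theorems rather than proving them. First, covariant finiteness of $\prescript{\bot}{}{(\tau U)}\cap Q^\bot$: your first suggestion (``explicit approximations built from $\tau U$ and $Q$'') cannot work at that level of generality, because for an arbitrary module $N$ the torsion class $\prescript{\bot}{}{N}$ need not be covariantly finite; the $\tau$-rigidity of $U$ must be used, and the only known mechanism is the Bongartz completion itself --- the universal-extension construction of a $\tau$-rigid module $U\oplus E$ with $\Fac(U\oplus E)=\prescript{\bot}{}{(\tau U)}$, together with a reduction modulo an idempotent to accommodate $Q$ --- which is precisely AIR's Theorem 2.10 and which you invoke but do not construct. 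Second, the identification of $Q$ as a summand of $P$: your maximality argument (``$P$ is the largest basic projective with $\Hom_A(P,M)=0$'') is the right idea, but it rests on the unproved fact that any $\tau$-rigid pair has at most $|A|$ indecomposable summands (equivalently, on AIR's result that $\tau$-tilting modules over the support quotient are sincere, whence the uniqueness and maximality of $P$); this is not circular, as you worry, but it is also not available from the statements quoted in the paper, so it would have to be proved or cited explicitly. Filling in these two steps would essentially amount to reproving the AIR results that the paper simply cites.
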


\begin{definition}[Absolute left/right Bongartz completion]\label{def:comp}
Let $(U,Q)$ be a basic $\tau$-rigid pair in $\mod A$ and  $\Psi$ the bijection in Theorem \ref{proorder}.
\begin{itemize}
\item[(i)] The basic $\tau$-tilting pair $\Psi^{-1}(\Fac U)$ is
called the {\em absolute left Bongartz completion} (or {\em Bongartz co-completion}) of $(U,Q)$.
 \item[(ii)] The basic $\tau$-tilting pair $\Psi^{-1}(\prescript{\bot}{}({\tau U})\cap Q^\bot)$ is called the {\em absolute right Bongartz completion}  (or {\em Bongartz completion}) of $(U,Q)$.
\end{itemize}
\end{definition}

\subsubsection{$\tau$-tilting reduction}
We recall the $\tau$-tilting reduction introduced by Jasso \cite{Jasso_2014}. Let $(U,Q)$ be a $\tau$-rigid pair in $\mod A$. The key result of $\tau$-tilting reduction is that we can study the set of basic $\tau$-tilting pairs over $A$ that contain $(U,Q)$ as a direct summand using the set of all basic $\tau$-tilting pairs over a `smaller' algebra $A_{(U,Q)}$.

Now we give a brief summary of the construction of $A_{(U,Q)}$. Let $(M,P)$ be the absolute right Bongartz completion of $(U,Q)$, namely, $\Fac M=\prescript{\bot}{}({\tau U})\cap Q^\bot$. Notice that $(M,P)$ actually has the form: $P=Q$ and $M=U\oplus U^\prime$ for some $\tau$-rigid module $U^\prime$ in $\mod A$.

Denote by $B=\End_A(M)=\End_A(U\oplus U^\prime)$ the endomorphism algebra of $M=U\oplus U^\prime$. In the algebra $B=\End_A(M)$, there exists an idempotent element $e_U$ corresponding to the projective right $B$-module $\Hom_A(M,U)$. We define the algebra $A_{(U,Q)}$ as follows: 
$$A_{(U,Q)}:=B/Be_UB.$$
This algebra is called the {\em $\tau$-tilting reduction} of $A$ at the $\tau$-rigid pair $(U,Q)$.

\begin{theorem}[\cite{Jasso_2014}*{Theorem 1.1}]\label{thm:taureduction}
Let  $(U,Q)$ be a basic $\tau$-rigid pair in $\mod A$ and $A_{(U,Q)}$ the $\tau$-tilting reduction of $A$ at $(U,Q)$. Then
there is an order-preserving
bijection between the set of basic $\tau$-tilting pairs in $\mod A$ that
 contain $(U,Q)$ as a direct summand and the set of all basic
$\tau$-tilting pairs in  $\mod A_{(U,Q)}$. 
\end{theorem}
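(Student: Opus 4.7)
\emph{Proof proposal.} The plan is to factor the bijection through functorially finite torsion classes in three steps. First, by Theorem \ref{proorder}(ii) and Proposition \ref{prominmax}(ii), the assignment $(M,P)\mapsto \Fac M$ identifies, order-preservingly, the set of basic $\tau$-tilting pairs of $\mod A$ containing $(U,Q)$ as a direct summand with the interval $\mathcal I$ of functorially finite torsion classes $\mathcal T$ in $\mod A$ satisfying $\Fac U \subseteq \mathcal T \subseteq \prescript{\bot}{}{(\tau U)}\cap Q^\bot$. Theorem \ref{proorder}(ii) applied to $\mod A_{(U,Q)}$ identifies its basic $\tau$-tilting pairs with the set $\mathcal J$ of functorially finite torsion classes in $\mod A_{(U,Q)}$, so it suffices to produce an order-preserving bijection $\mathcal I \cong \mathcal J$.

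Next I would introduce the wide subcategory $\mathcal W := U^\bot \cap \prescript{\bot}{}{(\tau U)} \cap Q^\bot$ of $\mod A$ and construct an order-preserving bijection between $\mathcal I$ and the set of functorially finite torsion classes in $\mathcal W$, via $\mathcal T \mapsto \mathcal T \cap \mathcal W$ with inverse $\mathcal S \mapsto \Fac U * \mathcal S$. The torsion pair $(\Fac U, U^\bot)$ in $\mod A$, well-defined since $\Fac U$ is a functorially finite torsion class by Proposition \ref{prominmax}(i), supplies for each $X \in \mathcal T$ a canonical sequence $0 \to X_1 \to X \to X_2 \to 0$ with $X_1 \in \Fac U \subseteq \mathcal T$ and $X_2 \in U^\bot$; since $\mathcal T$ is closed under quotients, $X_2 \in \mathcal T \cap \mathcal W$, hence $X \in \Fac U * (\mathcal T \cap \mathcal W)$. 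Conversely, one must verify that $\Fac U * \mathcal S$ is a functorially finite torsion class in $\mod A$; extension- and quotient-closure follow from the wide-subcategory structure of $\mathcal W$ combined with the torsion-class structures of $\Fac U$ and $\mathcal S$, while functorial finiteness is obtained by gluing right approximations from $\Fac U$ and from $\mathcal S$.

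The heart of the proof is the third step: an equivalence $\mathcal W \simeq \mod A_{(U,Q)}$ of abelian categories. Let $M = U \oplus U'$ be the module part of the absolute right Bongartz completion of $(U,Q)$, so $\Fac M = \prescript{\bot}{}{(\tau U)} \cap Q^\bot$, and set $B = \End_A(M)$. The functor $F := \Hom_A(M,-) \colon \mod A \to \mod B$ sends $\mathcal W$ into $\mod(B/Be_UB) = \mod A_{(U,Q)}$ because $X \in U^\bot$ forces $F(X)e_U = \Hom_A(U,X) = 0$. Since $M$ is $\tau$-rigid, $\mathcal W \subseteq \Fac M \subseteq \prescript{\bot}{}{(\tau M)}$, whence the Auslander-Reiten formula gives $\Ext^1_A(M, \mathcal W) = 0$ and $F$ is exact on $\mathcal W$; with the indecomposable summands of $U'$ serving as relative projective generators of $\mathcal W$, one deduces that $F|_{\mathcal W}$ is fully faithful, and essential surjectivity follows by lifting a projective presentation of any $A_{(U,Q)}$-module $N$ over summands of $U'$ and reading off its image in $\mathcal W$. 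This equivalence preserves functorial finiteness of torsion classes, so composing the three bijections yields the desired $\mathcal I \cong \mathcal J$. The main obstacle is precisely this third step: proving $\mathcal W$ is a wide subcategory (closure under cokernels is the delicate point, requiring the Auslander-Reiten formula to propagate membership in $\prescript{\bot}{}{(\tau U)}$ through cokernels) and identifying the relative projectives of $\mathcal W$ with summands of $U'$, after which essential surjectivity of $F|_{\mathcal W}$ is a standard presentation argument.
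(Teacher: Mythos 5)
You should first note that the paper does not prove this statement at all: it is imported verbatim from Jasso (\cite{Jasso_2014}*{Theorem 1.1}), and the ingredients of Jasso's argument are only recorded later, as Theorem \ref{thm:Jasso}(i)--(iv). Your outline follows precisely that route: reduce to the interval of functorially finite torsion classes between $\Fac U$ and $\prescript{\bot}{}{(\tau U)}\cap Q^\bot$ via Theorem \ref{proorder}(ii) and Proposition \ref{prominmax}(ii), pass to the wide subcategory $\mathcal W=U^\bot\cap\prescript{\bot}{}{(\tau U)}\cap Q^\bot$ by the mutually inverse assignments $\mathcal T\mapsto\mathcal T\cap\mathcal W$ and $\mathcal S\mapsto\Fac U\ast\mathcal S$, and finally use the equivalence $F|_{\mathcal W}=\Hom_A(M,-)|_{\mathcal W}\colon\mathcal W\to\mod A_{(U,Q)}$. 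So in structure your proposal coincides with the cited proof.

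There is, however, one load-bearing step that fails as written. You take the indecomposable summands of $U'$ as ``relative projective generators of $\mathcal W$'', but in general these summands do not even lie in $\mathcal W$: there is no reason for $U'_i$ to belong to $U^\bot$. The projective objects of $\mathcal W$ are the torsion-free quotients $f_U(U'_i)=U'_i/t_U(U'_i)$ with respect to the torsion pair $(\Fac U,U^\bot)$, and the real technical content of Jasso's Theorem 3.8 --- which your sketch silently assumes --- is that $F$ carries these to a projective generator of $\mod A_{(U,Q)}$, i.e.\ that every morphism $M\to t_U(U'_i)$ factors through $\add U$, so that $F(f_U(U'_i))\cong F(U'_i)/[\add U](M,U'_i)$ is projective over $B/Be_UB$. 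Without this identification your full-faithfulness and essential-surjectivity arguments do not get off the ground; with it, they do, but that is exactly the heart of the proof rather than a routine presentation argument. A smaller slip in the same spirit: in your second step, functorial finiteness of $\Fac U\ast\mathcal S$ (resp.\ of $\mathcal T\cap\mathcal W$ in $\mathcal W$) is not about gluing \emph{right} approximations --- torsion classes are automatically contravariantly finite --- the nontrivial half is \emph{covariant} finiteness, i.e.\ the existence of left approximations, which needs an argument of the type carried out in Proposition \ref{pro:finite}(ii) and (iv) (or a transport of Ext-projective generators as in Jasso's Theorem 3.14).
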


\subsection{Silting theory}
In this subsection, we fix a $K$-linear, Krull-Schmidt, Hom-finite triangulated category $\mathcal D$ with a silting object. 

Recall that an object $U\in\mathcal D$ is {\em presilting} if $\Hom_{\mathcal D}(U,U[i])=0$ for any  integer $i>0$. A presilting
object $S\in\mathcal D$ is {\em silting} if 
$\mathcal D$ coincides with the thick subcategory of $\mathcal D$ generated by $S$, i.e., $\mathcal D=\thick(S)$. A presilting object $U\in\mathcal D$ is {\em almost silting} if there
exists an indecomposable object $X\notin\add U$ such that $U\oplus X$ is silting. In what follows, we always consider presilting objects up to isomorphisms.

\begin{theorem}[\cite{Aihara-Iyama_2012}*{Theorem 2.27}]
\label{thm:k0}
Let $S=\bigoplus_{i=1}^nS_i$ be a basic silting object in $\mathcal D$, where each $S_i$ is indecomposable. Then the Grothendieck group $\K_0(\mathcal D)$ of $\mathcal D$ is a free
abelian group with a basis $[S_1],\ldots,[S_n]$. In particular, any two basic silting objects in $\mathcal D$ have the same number of indecomposable direct summands.
\end{theorem}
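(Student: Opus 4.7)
Spanning of $\K_0(\mathcal D)$ is formal: $\thick(S)$ is built up from $\add S$ by iteratively taking shifts, forming cones, and splitting off direct summands. In $\K_0(\mathcal D)$ each of these operations preserves the $\mathbb Z$-span of $\{[S_i]\}_{i=1}^n$ --- shifts give $[X[1]]=-[X]$, a triangle $X\to Y\to Z\to X[1]$ gives $[Y]=[X]+[Z]$, and a decomposition $X=X_1\oplus X_2$ gives $[X]=[X_1]+[X_2]$ --- so $\mathcal D=\thick(S)$ forces $[S_1],\dots,[S_n]$ to generate $\K_0(\mathcal D)$.

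For linear independence, the key tool is the endomorphism algebra $B=\End_{\mathcal D}(S)$, which is a finite-dimensional basic $K$-algebra by Hom-finiteness together with the indecomposability of each $S_i$ (so that every $\End_{\mathcal D}(S_i)$ is local). The presilting vanishing $\Hom_{\mathcal D}(S,S[k])=0$ for $k>0$, combined with $\mathcal D=\thick(S)$, supplies via the standard silting dictionary (Keller--Vossieck, Koenig--Yang) a triangle equivalence between $\mathcal D$ and the bounded homotopy category $\mathcal K^b(\proj B)$ sending $S_i$ to the indecomposable projective $P_i=\Hom_{\mathcal D}(S,S_i)$. Passing to Grothendieck groups yields $\K_0(\mathcal D)\cong\K_0(\mathcal K^b(\proj B))\cong\K_0(\proj B)$, and the right-hand side is free abelian on $\{[P_1],\dots,[P_n]\}$, since composition-factor multiplicity against the complete set of simple $B$-modules produces the identity matrix.

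The hard part is justifying the equivalence $\mathcal D\simeq\mathcal K^b(\proj B)$ in the generality stated, which strictly speaking requires a dg or $\infty$-enhancement of $\mathcal D$. To sidestep this I would run a direct argument through an Euler form: for $M,N\in\thick(S)$ set $\chi(M,N)=\sum_{k\in\mathbb Z}(-1)^k\dim_K\Hom_{\mathcal D}(M,N[k])$, verify finiteness of the sum by a double induction on the number of cones needed to build $M$ and $N$ from shifts of the $S_i$ (using Hom-finiteness together with $\Hom_{\mathcal D}(S_i,S_j[k])=0$ for $k>0$), check that $\chi$ descends to a bilinear pairing on $\K_0(\mathcal D)$ via the long exact Hom-sequence, and then exhibit a dual family --- simple-minded objects $T_j\in\mathcal D$ with $\chi(S_i,T_j)=\delta_{ij}$, obtained by transporting the simple tops of $B$ --- whose existence forces the $[S_i]$ to be linearly independent and simultaneously pins the cardinality $n$ as an invariant of $\mathcal D$.
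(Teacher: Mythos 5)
The spanning half of your argument is fine, and since the paper only cites Aihara--Iyama for this statement, the comparison is really with their proof; but your route to linear independence has two genuine gaps. First, the Euler form $\chi(M,N)=\sum_{k\in\mathbb Z}(-1)^k\dim_K\Hom_{\mathcal D}(M,N[k])$ need not be defined in the stated generality: silting only gives vanishing of $\Hom_{\mathcal D}(S,S[k])$ for $k>0$, and nothing constrains negative degrees. For example, for the graded algebra $K[x]$ with $x$ in degree $-2$ viewed as a dg algebra with zero differential, $\mathcal D=\thick(A)=\opname{per}(A)$ is $K$-linear, Hom-finite and Krull--Schmidt, $A$ is a basic silting object, yet $\Hom_{\mathcal D}(A,A[k])\neq 0$ for infinitely many $k<0$, so your sum diverges already in the base case and the proposed double induction cannot establish finiteness. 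Second, the ``dual family'' $T_j$ with $\chi(S_i,T_j)=\delta_{ij}$ is obtained by ``transporting the simple tops of $B$'', which presupposes exactly the equivalence $\mathcal D\simeq \K^b(\proj B)$ you set out to avoid (and which indeed requires an enhancement not assumed here); moreover, even granting that equivalence, the simple $B$-modules are not objects of $\K^b(\proj B)$ when $B$ has infinite global dimension, so no such objects $T_j$ exist \emph{inside} $\mathcal D$ in general. The usual fix pairs $\K_0(\K^b(\proj B))$ against $\K_0$ of a bounded derived category containing the simples, but that larger category is not available from the abstract hypotheses.

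For contrast, the cited proof of Aihara--Iyama needs neither an enhancement nor any finiteness of negative-degree Homs: one shows every object of $\mathcal D=\thick(S)$ admits a finite filtration with factors in $\add S[m],\dots,\add S[m+\ell]$ (so the classes $[S_i]$ generate), and then proves that the resulting multiplicities, with alternating signs, give a well-defined additive function on triangles, using only $\Hom_{\mathcal D}(S,S[k])=0$ for $k>0$ and the Krull--Schmidt property; equivalently, $\K_0(\mathcal D)$ is identified with the split Grothendieck group of the co-heart $\add S$ of the associated weight structure. If you want to salvage your approach, you would either have to add the hypothesis that $\mathcal D$ is algebraic (and $S$ has bounded negative self-extensions), or replace the Euler-form pairing by this filtration/weight-structure argument.
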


\begin{definition-theorem}[Left and right mutation, \cite{Aihara-Iyama_2012}]
Let $S=\bigoplus_{i=1}^nS_i$ be a basic silting object in $\mathcal D$. Fix a $k\in\{1,\ldots,n\}$ and write $S=S_k\oplus U$, where $U=\bigoplus_{i\neq k}S_i$. Take a triangle
$$\xymatrix{S_k\ar[r]^f&U^\prime \ar[r]&X\ar[r]&S_k[1]}$$
with a minimal left $\add U$-approximation $f:S_k\rightarrow U^\prime$ of $S_k$. Then $U\oplus X$ is silting in $\mathcal D$. We call the basic silting object $(U\oplus X)^\flat$ corresponding to $U\oplus X$ the {\em left mutation} of $S$ at $S_k$ and denote it by $\mu_{S_k}^L(S):=(U\oplus X)^\flat$.
Dually, we can define the {\em right mutation}  $\mu_{S_k}^R(S)$ of $S$ at $S_k$.
\end{definition-theorem}

Notice that the left mutation defined above corresponds to the irreducible left mutation in \cite{Aihara-Iyama_2012}. In this paper, we only consider irreducible left/right mutations.

Let $X$ and $Y$ be any two objects in $\mathcal D$. We write $Y\leq X$ if $\Hom_{\mathcal D}(X,Y[i])=0$ for any  integer $i>0$. We write $Y<X$ if $Y\leq X$ and $Y\neq X$. Notice that we  have $U[1]\leq U$ for any presilting object $U$.

\begin{theorem}[\cite{Aihara-Iyama_2012}*{Theorem 2.11 and Theorem 2.35}]
\label{thm:AIcover}
The following statements hold.
\begin{itemize}
    \item [(i)] The relation ``\;$\leq$\;" induces a partial order on the set of basic silting objects in $\mathcal D$.
    \item[(ii)] Let $T$ and $S$ be two basic silting objects in $\mathcal D$. Then $T$ is a left mutation of $S$ if and only if $T<S$ and there is no basic silting object $M$ such that $T<M<S$.
\end{itemize}
\end{theorem}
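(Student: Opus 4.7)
The plan is to analyze both parts through the auxiliary subcategory
\[
\mathcal{U}_S := \{X \in \mathcal{D} : \Hom_{\mathcal{D}}(S, X[i]) = 0 \text{ for all } i > 0\}
\]
attached to each basic silting object $S$, together with two preliminary observations. First, $T \leq S$ is by definition the condition $T \in \mathcal{U}_S$. Second, $\mathcal{U}_S$ is closed under extensions, direct summands and positive shifts, and a standard silting-theoretic fact identifies $\mathcal{U}_T$ as the smallest such subcategory of $\mathcal{D}$ containing $T$; consequently $T \in \mathcal{U}_S$ upgrades to the inclusion $\mathcal{U}_T \subseteq \mathcal{U}_S$.

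For part (i), reflexivity is simply the presilting condition $\Hom_{\mathcal{D}}(S, S[i]) = 0$ for $i > 0$, and transitivity follows from the aisle inclusion: if $U \leq T$ and $T \leq S$, then $\mathcal{U}_U \subseteq \mathcal{U}_T \subseteq \mathcal{U}_S$, so $U \in \mathcal{U}_S$. For antisymmetry, from $\mathcal{U}_S = \mathcal{U}_T$ I would observe that both $S$ and $T$ sit as ``Ext-projective'' generators of the common aisle (in the sense that $\Hom_{\mathcal{D}}(S, \mathcal{U}_S[1]) = 0$, since $\mathcal{U}_S[1]$ consists of objects $X$ with $\Hom_{\mathcal{D}}(S, X[j]) = 0$ for all $j \geq 0$), and recover the basic silting object from the aisle as the sum of the indecomposable Ext-projectives in it, using Theorem \ref{thm:k0} for the correct rank count.

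For the forward direction of (ii), given $T = \mu_{S_k}^L(S)$ arising from the triangle $S_k \xrightarrow{f} U^\prime \to X \to S_k[1]$, applying $\Hom_{\mathcal{D}}(S, -)$ together with the presilting property of $S$ and the fact that $U^\prime \in \add S$ yields $\Hom_{\mathcal{D}}(S, X[i]) = 0$ for $i > 0$, hence $T \in \mathcal{U}_S$. Strictness $T \neq S$ follows because minimality of $f$ prevents $X$ from lying in $\add S$: if $X \in \add S_k$ then $\Hom_{\mathcal{D}}(X, S_k[1]) = 0$ forces the triangle to split and hence $S_k \in \add U$, while if $X \in \add U$ then $T$ would fail the rank count of Theorem \ref{thm:k0}. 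To eliminate intermediate silting objects, I would use that $T$ and $S$ share $n-1$ indecomposable summands, so any $M$ with $T < M < S$ would have to match the same rank constraint and differ from both by a single summand, which I would rule out by a Grothendieck-class comparison in $\K_0(\mathcal{D})$.

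The backward direction is the main obstacle. Given $T < S$ admitting no intermediate silting, I would pick an indecomposable summand $S_k$ of $S$ not appearing in $T$ (possible since $|S| = |T|$ and $S \not\cong T$) and form the candidate $T^\prime := \mu_{S_k}^L(S)$. By the forward direction $T^\prime < S$ is already a cover, so it suffices to show $T \leq T^\prime$: the cover hypothesis on $T$ would then force $T = T^\prime$. The technically delicate step, and the one that requires the full weight of the approximation-theoretic machinery in the spirit of Aihara--Iyama, is to factor any test morphism from $S_k$ into the aisle $\mathcal{U}_T$ through the minimal left $\add U$-approximation $f$, which is where the universal property of minimal approximations together with the hypothesis $T \in \mathcal{U}_S$ enters to produce the required comparison $T \in \mathcal{U}_{T^\prime}$.
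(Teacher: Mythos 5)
The paper does not prove this theorem at all: it is quoted verbatim from Aihara--Iyama, so your attempt has to stand on its own. Part (i) of your outline is essentially viable: the ``standard silting-theoretic fact'' you invoke --- that $\mathcal{U}_T$ is the smallest subcategory containing $T$ closed under extensions, direct summands and positive shifts, equivalently $\mathcal{U}_T=\bigcup_{\ell\geq 0}\add T\ast\add T[1]\ast\cdots\ast\add T[\ell]$ --- is true, but it is precisely the nontrivial input behind transitivity and you assert it rather than prove it (it needs $\thick(T)=\mathcal D$ and an induction removing negative shifts). Antisymmetry is also more roundabout than necessary: $T\leq S$ and $S\leq T$ make $S\oplus T$ presilting, hence silting, and Theorem~\ref{thm:k0} then forces $\add S=\add T$, so $S\cong T$.

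The genuine gaps are in (ii). In the forward direction you do get $T\leq S$ and strictness (the splitting argument via the connecting map $X\to S_k[1]$ is the right mechanism, even though your case split on $X$ is not exhaustive as written), but the cover property is not proved: nothing in your sketch forces a hypothetical silting $M$ with $T<M<S$ to contain the $n-1$ common summands of $S$ and $T$, so there are no Grothendieck classes to compare in $\K_0(\mathcal D)$; showing that an intermediate object inherits the common summand $U$ is exactly the hard step. In the backward direction the reduction to proving $T\leq T'$ with $T'=\mu^{L}_{S_k}(S)$ is fine, and via the exchange triangle it amounts to showing that every morphism $S_k\to T$ factors through $\add U$. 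But this cannot follow from the only ingredients you name (minimality of $f$, $T\in\mathcal{U}_S$, and $S_k\notin\add T$): take $A=K(1\to 2)$, $\mathcal D=\K^{b}(\proj A)$, the silting object $A=P_1\oplus P_2$, $S_k=P_1$, and $T=S_1\oplus P_2[1]$, where $S_1$ is the simple at vertex $1$ viewed as the two-term complex $P_2\to P_1$. Then $T<A$ and $P_1\notin\add T$, yet the nonzero map $P_1\to S_1$ does not factor through $\add P_2$ (as $\Hom_{\mathcal D}(P_1,P_2)=0$), and indeed $T\not\leq\mu^{L}_{P_1}(A)=P_2\oplus P_1[1]$; of course here $T<P_1\oplus S_1<A$, so $T$ is not a cover. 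Hence the no-intermediate hypothesis must enter that factorization step in an essential way, and your sketch never says how; this missing argument (together with the cover property above) is the actual substance of Aihara--Iyama's theorem, so as it stands the proposal does not constitute a proof of (ii).
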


Let $S$ be a basic silting object in $\mathcal D$. Denote by 
$S\ast S[1]$ the full subcategory of $\mathcal D$ consisting of all the object $Z\in\mathcal D$ such that there exists a triangle
$$X\rightarrow Z\rightarrow Y\rightarrow X[1]$$
with $X\in \add S$ and $Y\in \add (S[1])$.

\begin{theorem}\label{thm:OIY14}
Let $S$ be a basic silting object in $\mathcal D$ and denote by $\mathcal C=S\ast S[1]$. Let
$A=\End_{\mathcal D}(S)$ be the endomorphism algebra of $S$ in $\mathcal D$. Then the following statements hold.
\begin{itemize}
    \item [(i)] {\rm (}\cite{Jasso_2014}*{Proposition 4.5}, \cite{Iyama-Yoshino_2008}{\rm).} The functor $F=\Hom_{\mathcal D}(S,-):\mathcal C\rightarrow \mod A$ induces an equivalence of abelian categories$$\overline{(-)}:\frac{\mathcal C}{[S[1]]}\rightarrow\mod A,$$
    where $[S[1]]$ is the ideal of $\mathcal C$ consisting of morphisms that factor through $\add (S[1])$.
    \item[(ii)] {\rm (}\cite{Jasso_2014}, \cite{Iyama-Jorgensen-Yang_2014}*{Theorem 4.5}{\rm).} The functor $F$ induces a bijection from presilting objects in $\mathcal C$ to $\tau$-rigid pairs in $\mod A$ via
    $$U=X_U\oplus Y_U[1]\mapsto (F(X_U),F(Y_U))=(\overline X_U,\overline Y_U),$$
    where $U$ is a  presilting object in $\mathcal C$ and $U=X_U\oplus Y_U[1]$ is a direct sum decomposition of $U$ such that $Y_U[1]$ is a maximal direct summand of $U$ that
belongs to $\add (S[1])$. Moreover, this bijection restricts to an order-preserving bijection from the poset of basic silting objects in $\mathcal C$ to the poset of basic $\tau$-tilting pairs in $\mod A$. 
\end{itemize}

\end{theorem}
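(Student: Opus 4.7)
My approach would mostly follow the cited references of Iyama--Yoshino, Jasso, and Iyama--Jørgensen--Yang, treating (i) and (ii) in turn.

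For part (i), I would first verify that $F = \Hom_{\mathcal D}(S,-)$ sends $\add (S[1])$ to zero, so any morphism factoring through $\add (S[1])$ is killed and $F$ descends to $\overline{(-)}: \mathcal C/[S[1]] \to \mod A$. Essential surjectivity is handled by lifting: given $M \in \mod A$ with a projective presentation $P_1 \to P_0 \to M \to 0$, lift to a morphism $S_1 \to S_0$ in $\add S$, complete it to a triangle $S_1 \to S_0 \to Z \to S_1[1]$, and observe that $Z \in S \ast S[1] = \mathcal C$ with $F(Z) \cong M$. For full faithfulness I would choose, for each $X \in \mathcal C$, a defining triangle $S_1^X \to S_0^X \to X \to S_1^X[1]$ with $S_i^X \in \add S$, apply $\Hom_{\mathcal D}(-, Y)$, and use the silting vanishings $\Hom_{\mathcal D}(S, S[i]) = 0$ for $i > 0$ together with the fact that $F(S_1^X) \to F(S_0^X)$ is a projective presentation of $F(X)$; a parallel computation on the module side identifies $\Hom_{\mathcal C/[S[1]]}(X,Y)$ with $\Hom_A(F(X), F(Y))$. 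Once the functor is an equivalence, the abelian structure of $\mod A$ transports back to $\mathcal C/[S[1]]$.

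For part (ii), using (i), I decompose a presilting $U \in \mathcal C$ as $U = X_U \oplus Y_U[1]$ with $Y_U[1]$ a maximal direct summand of $U$ lying in $\add (S[1])$; this decomposition is unique up to isomorphism. The condition $\Hom_{\mathcal D}(U, U[1]) = 0$ then decomposes into three vanishings: $\Hom_{\mathcal D}(X_U, X_U[1]) = 0$, $\Hom_{\mathcal D}(Y_U[1], X_U[1]) = \Hom_{\mathcal D}(Y_U, X_U) = 0$, and the summand involving two $S[1]$-factors, which vanishes automatically since $\Hom_{\mathcal D}(S, S[i]) = 0$ for $i > 0$. Applying (i) and noting that $\overline{Y_U}$ is projective, the middle vanishing becomes $\Hom_A(\overline{Y_U}, \overline{X_U}) = 0$ directly; the crucial translation is that $\Hom_{\mathcal D}(X_U, X_U[1]) \cong \Hom_A(\overline{X_U}, \tau \overline{X_U})$, obtained by feeding $S_1 \to S_0 \to X_U \to S_1[1]$ into $\Hom_{\mathcal D}(-, X_U[1])$ and matching the outcome with the standard formula for $\Hom_A(-, \tau -)$ in terms of a minimal projective presentation. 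Together these identifications show $(\overline{X_U}, \overline{Y_U})$ is $\tau$-rigid, and a parallel lifting argument shows every $\tau$-rigid pair arises this way.

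For the restriction to silting/$\tau$-tilting objects and order preservation, I would use Theorem \ref{thm:k0} to see that basic silting objects in $\mathcal C$ have exactly $|A|$ indecomposable summands, which matches $|M| + |P| = |A|$ for basic $\tau$-tilting pairs; the bijection on rigid objects then forces the silting/$\tau$-tilting restriction by cardinality. For the order, I would check that $T \leq T'$ in the silting poset (i.e.\ $\Hom_{\mathcal D}(T', T[i]) = 0$ for all $i > 0$) translates via (i) into the inclusion of $\Fac$-classes that defines the order on basic $\tau$-tilting pairs in Theorem \ref{proorder}, by again using $\add S$-presentations to rewrite the relevant $\Hom$ groups as $\Ext^1$-vanishings in $\mod A$. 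The main obstacle throughout is the identification $\Hom_{\mathcal D}(X_U, X_U[1]) \cong \Hom_A(\overline{X_U}, \tau \overline{X_U})$: it is the heart of Jasso's dictionary between silting and $\tau$-tilting data, and requires care in choosing the $\add S$-presentation of $X_U$ so that its image under $F$ is a genuine minimal projective presentation before invoking the Auslander--Reiten formula for $\Hom_A(-, \tau -)$.
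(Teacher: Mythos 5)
The paper does not actually prove this statement: it is quoted as a preliminary with citations to Jasso and Iyama--J{\o}rgensen--Yang, so your outline can only be compared with those sources, and it does follow their standard route ($\add S$-presentations $S_1\to S_0\to X\to S_1[1]$, their images as projective presentations in $\mod A$, and the translation of the silting vanishing conditions into $\tau$-rigidity). One remark before the main issue: the displayed identification $\Hom_{\mathcal D}(X_U,X_U[1])\cong\Hom_A(\overline X_U,\tau\overline X_U)$ is not literally what the computation gives; feeding the presentation into $\Hom_{\mathcal D}(-,Y[1])$ and invoking the Auslander--Reiten formula yields a $K$-duality between $\Hom_{\mathcal D}(X,Y[1])$ and a stable Hom-space attached to $\Hom_A(\overline Y,\tau\overline X)$ (note also the swap of arguments; compare Remark \ref{rmk:ars} later in the paper). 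Since only the vanishing statements enter, this is harmless, and you rightly flag the need for the image to be a \emph{minimal} projective presentation; but as stated the isomorphism is wrong.

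The genuine gap is the claim that the restriction to silting objects versus $\tau$-tilting pairs is ``forced by cardinality''. One direction is indeed a count: a basic silting object $T\in\mathcal C$ has $n=|A|$ indecomposable summands by Theorem \ref{thm:k0}, so its image is a basic $\tau$-rigid pair with $n$ summands, which is $\tau$-tilting by definition. But in the converse direction you must show that the basic presilting object $U\in\mathcal C$ corresponding to a basic $\tau$-tilting pair is silting, i.e.\ $\thick(U)=\mathcal D$, and having $n$ indecomposable summands does not give this: Theorem \ref{thm:k0} says silting objects have $n$ summands, not that presilting objects with $n$ summands generate $\mathcal D$. This is precisely the nontrivial content of the cited Iyama--J{\o}rgensen--Yang theorem. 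The standard repair: for a $\tau$-tilting pair $(M,P)$ there is an exact sequence $A\to M^0\to M^1\to 0$ with $M^0,M^1\in\add M$ (coming from a left $\add M$-approximation of $A$ into the torsion class $\Fac M$); lifting it through the equivalence of part (i) produces a triangle exhibiting $S$ as an object of $\thick(U)$, whence $\thick(U)\supseteq\thick(S)=\mathcal D$ and $U$ is silting. Relatedly, in the order-preservation step the Hom-vanishing defining $T\le T'$ translates into $\Fac M\subseteq\prescript{\bot}{}{(\tau M')}\cap P'^{\bot}$, so to reach the inclusion $\Fac M\subseteq\Fac M'$ of Theorem \ref{proorder} you must also record that $\Fac M'=\prescript{\bot}{}{(\tau M')}\cap P'^{\bot}$ holds for $\tau$-tilting pairs (this can be extracted from Proposition \ref{prominmax} together with Theorem \ref{proorder}); that equality is used silently in your sketch.
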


\section{Relative left Bongartz completions in $\tau$-tilting theory}\label{sec:3}

\subsection{Relative left Bongartz completions}

For any two full subcategories $\mathcal X$ and $\mathcal Y$ of $\mod A$, we denote by $\mathcal X\ast \mathcal Y$ the full subcategory of $\mod A$ consisting of the modules $M$ such that there exists a short exact sequence 
$$\xymatrix{0\ar[r]&X\ar[r]&M\ar[r]&Y\ar[r]&0}$$
with $X\in\mathcal X$ and $Y\in\mathcal Y$.

Let $\mathcal A$ be an abelian category. We denote by 
\begin{itemize}
    \item  $\tors(\mathcal A)$ the set of  torsion classes in $\mathcal A$;

    \item $\ftors(\mathcal A)$ the set of  functorially finite torsion classes in $\mathcal A$. 

\end{itemize}
 If $\mathcal A=\mod A$, we simply write  $\tors(A):=\tors(\mod A)$ and
 $\ftors(A):=\ftors(\mod A)$.

For a basic $\tau$-rigid pair $(U,Q)$ in $\mod A$, we denote by
\begin{eqnarray}
\tors_{(U,Q)}(A)&:=&\{\mathcal T\in \tors(A)|\Fac U\subseteq \mathcal T \subseteq \prescript{\bot}{}({\tau U})\cap Q^\bot\},\nonumber\\
\ftors_{(U,Q)}(A)&:=&\{\mathcal T\in \ftors(A)|\Fac U\subseteq \mathcal T \subseteq \prescript{\bot}{}({\tau U})\cap Q^\bot\}.\nonumber
\end{eqnarray}

\begin{theorem}\label{thm:Jasso}
Let $(U,Q)$ be a basic $\tau$-rigid pair in $\mod A$ and $(M,P)$ the absolute right Bongartz completion of $(U,Q)$. Denote by $B=\End_A(M)$ and $A_{(U,Q)}=B/Be_UB$ the $\tau$-tilting reduction of $A$ at $(U,Q)$. Then the following statements hold.

\begin{itemize}
    \item [(i)] {\rm (}\cite{Yurikusa_2018},\cite{bst_2019}*{Corollary 3.25}{\rm).} The subcategory $\mathcal W:=U^\bot\cap\prescript{\bot}{}({\tau U})\cap Q^\bot$ is a wide subcategory of $\mod A$, that is, $\mathcal W$ is closed under kernels, cokernels and extensions.
    
    \item[(ii)] {\rm (}\cite{Jasso_2014}*{Theorem 3.8}{\rm).} The functor $F=\Hom_A(M,-): \mod A\rightarrow \mod B$ restricts to an equivalence of abelian categories from $\mathcal W$ to $\mod A_{(U,Q)}\subseteq\mod B$.
    
    \item[(iii)] {\rm (}\cite{Jasso_2014}*{Corollary 3.11}{\rm).}
    The functor $F$ induces an order-preserving bijection  from
     $\tors(\mathcal W)$ to  $\tors(A_{(U,Q)})$, which restricts to an order-preserving bijection  from
 $\ftors(\mathcal W)$ to  $\ftors(A_{(U,Q)})$.

    \item[(iv)] {\rm (}\cite{Jasso_2014}*{Theorem 3.12 and Theorem 3.14}{\rm).} The map $\tors(\mathcal W)\ni\mathcal G\mapsto \Fac U
\ast\mathcal G $
is an order-preserving bijection from $\tors(\mathcal W)$ to $\tors_{(U,Q)}(A)$ and it induces an order-preserving bijection from $\ftors(\mathcal W)$ to $\ftors_{(U,Q)}(A)$.
\end{itemize}
\end{theorem}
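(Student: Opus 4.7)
The plan is to prove statement (iv), which is the substantive content of the theorem; parts (i)--(iii) are taken as known from the cited sources. I would exhibit an explicit inverse to the map $\Phi\colon\mathcal G\mapsto\Fac U\ast\mathcal G$, namely $\Phi^{-1}\colon\mathcal T\mapsto\mathcal T\cap\mathcal W$, verify they are mutually inverse and order-preserving, and then use the chain of bijections from parts (ii)--(iii) together with Theorem \ref{thm:taureduction} to upgrade the bijection to the functorially finite setting.

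First, I would check that $\Phi$ is well-defined. The inclusion $\Fac U\subseteq\Fac U\ast\mathcal G$ is immediate, and $\Fac U\ast\mathcal G\subseteq\prescript{\bot}{}{(\tau U)}\cap Q^\bot$ follows because the latter is a torsion class by Proposition \ref{prominmax} and contains both $\Fac U$ and $\mathcal G\subseteq\mathcal W$. Closure of $\Fac U\ast\mathcal G$ under extensions is routine splicing. Closure under quotients requires a short pushout argument: given an extension $0\to X\to M\to Y\to 0$ with $X\in\Fac U$, $Y\in\mathcal G$, and a quotient $M\twoheadrightarrow N$, the induced sequence $0\to X'\to N\to Y'\to 0$ has $X'$ a quotient of $X$ (hence in $\Fac U$) and $Y'$ a quotient of $Y$ in $\mod A$; since $\mathcal W$ is closed under cokernels one gets $Y'\in\mathcal W$, and then $Y'\in\mathcal G$ because $\mathcal G$ is a torsion class in $\mathcal W$.

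Next I would verify the inverse. For $\mathcal T\in\tors_{(U,Q)}(A)$, the intersection $\mathcal T\cap\mathcal W$ is a torsion class in $\mathcal W$: extensions and epimorphisms in $\mathcal W$ are computed in $\mod A$, so closure is inherited from $\mathcal T$ and $\mathcal W$. Applying the canonical sequence of the torsion pair $(\Fac U, U^\bot)$ to any $M\in\mathcal T$ yields $0\to t(M)\to M\to M/t(M)\to 0$ with $t(M)\in\Fac U\subseteq\mathcal T$ and $M/t(M)$ a quotient of $M$ lying in $U^\bot$, hence in $\mathcal T\cap U^\bot\cap\prescript{\bot}{}{(\tau U)}\cap Q^\bot=\mathcal T\cap\mathcal W$. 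This gives $\mathcal T\subseteq\Fac U\ast(\mathcal T\cap\mathcal W)$, and the reverse inclusion follows from extension-closure of $\mathcal T$. Conversely, for $\mathcal G\in\tors(\mathcal W)$, any $M\in(\Fac U\ast\mathcal G)\cap\mathcal W$ fits in $0\to X\to M\to Y\to 0$ with $X\in\Fac U$; since $M\in U^\bot$, the composition $U^n\twoheadrightarrow X\hookrightarrow M$ is forced to be zero, so $X=0$ and $M\cong Y\in\mathcal G$. Order-preservation of both $\Phi$ and $\Phi^{-1}$ is manifest from their definitions.

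The main obstacle is the functorial finiteness claim, which I would handle by chaining bijections rather than by a direct approximation argument in $\mod A$. By Theorem \ref{proorder}, members of $\ftors_{(U,Q)}(A)$ correspond to basic $\tau$-tilting pairs of $A$ sitting between the absolute left and right Bongartz completions of $(U,Q)$, which by Proposition \ref{prominmax} is precisely the set of basic $\tau$-tilting pairs containing $(U,Q)$ as a direct summand. Theorem \ref{thm:taureduction} identifies these with all basic $\tau$-tilting pairs over $A_{(U,Q)}$, Theorem \ref{proorder} again identifies the latter with $\ftors(A_{(U,Q)})$, and part (iii) identifies $\ftors(A_{(U,Q)})$ with $\ftors(\mathcal W)$. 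The remaining step, and the delicate point, is to check that this composite chain of bijections agrees with the restriction of $\Phi$; this reduces to tracing that the $\mathcal W$-component of the canonical sequence $\mathcal T=\Fac U\ast(\mathcal T\cap\mathcal W)$ corresponds, under the equivalence of part (ii), to the $\tau$-tilting pair over $A_{(U,Q)}$ produced by $\tau$-tilting reduction from the original basic $\tau$-tilting pair $\Psi^{-1}(\mathcal T)$ containing $(U,Q)$.
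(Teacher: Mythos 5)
The paper itself gives no proof of this theorem: all four parts are stated as recollections with citations (Yurikusa, B\"ustle--Smith--Treffinger, and Jasso), so what you are really doing is reproving Jasso's Theorems 3.12 and 3.14. Your skeleton for the bijection in (iv) --- the explicit inverse $\mathcal T\mapsto\mathcal T\cap\mathcal W$, with the canonical sequence for $(\Fac U,U^\bot)$ in one direction and the vanishing of maps from $\Fac U$ to $U^\bot$ in the other --- is the standard route, and those two mutual-inverse computations are fine. The genuine gap is in the well-definedness of $\Phi$, i.e.\ that $\Fac U\ast\mathcal G$ is a torsion class. Your quotient-closure argument is incorrect as written: from $0\to X\to M\to Y\to 0$ and $M\twoheadrightarrow N$ you obtain $0\to X'\to N\to Y'\to 0$ with $Y'$ a factor module of $Y$ in $\mod A$, and you conclude $Y'\in\mathcal W$ ``because $\mathcal W$ is closed under cokernels''. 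But a wide subcategory is only closed under cokernels of morphisms between its own objects; the kernel of $Y\twoheadrightarrow Y'$ need not lie in $\mathcal W$, and $U^\bot$ is not closed under factor modules, so $Y'$ can leave $\mathcal W$ entirely. Concretely, over the path algebra of type $A_2$ take $U=T$ the simple top of the length-two indecomposable $E$ (with socle $L$) and $Q=0$; then $\mathcal W=\add E$, $\mathcal G=\add E$, $\Fac U=\add T$, and for $M=T\oplus E$, $N=T\oplus T$ your induced sequence has $Y'\cong T\notin U^\bot$, although $N\in\Fac U\ast\mathcal G$ does hold (via a larger torsion submodule). The repair is to take the canonical sequence $0\to tN\to N\to N/tN\to 0$ for $(\Fac U,U^\bot)$: then $N/tN\in U^\bot$ and, being a factor of $N\in\prescript{\bot}{}{(\tau U)}\cap Q^\bot$, lies in $\mathcal W$; since every map from $\Fac U$ to $U^\bot$ vanishes, the epimorphism $N\to N/tN$ kills the image of $X$ and hence factors through $Y$, so $N/tN$ is an epimorphic image of $Y$ inside $\mathcal W$ and therefore lies in $\mathcal G$.

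Extension-closure is likewise not ``routine splicing'': splicing only places an extension of two objects of $\Fac U\ast\mathcal G$ in $\Fac U\ast(\mathcal G\ast\Fac U)\ast\mathcal G$, and contracting this requires $\mathcal G\ast\Fac U\subseteq\Fac U\ast\mathcal G$, which in turn needs $\Ext^1_A(U,\mathcal G)=0$, i.e.\ the Ext-projectivity of $U$ in $\prescript{\bot}{}{(\tau U)}$ --- exactly the fact the paper invokes in the proof of Theorem \ref{mainthm}(i); with it, a surjection $U^n\twoheadrightarrow F$ lifts along $E\to F$ and the same trace argument as above finishes. Finally, for the functorially finite half you only assemble the chain of order isomorphisms between $\ftors_{(U,Q)}(A)$, the basic $\tau$-tilting pairs containing $(U,Q)$, $\ftors(A_{(U,Q)})$ and $\ftors(\mathcal W)$, and you explicitly defer the check that this composite is inverse to the restriction of $\Phi$. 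That compatibility is precisely the content of \cite{Jasso_2014}*{Theorem 3.14} (one has to trace Ext-projectives and torsion classes through the equivalence $F$ of part (ii)); without it, the chain only shows the two posets are abstractly isomorphic, not that $\Phi$ itself preserves and reflects functorial finiteness. So, as written, both the well-definedness of $\Phi$ and the second statement of (iv) remain unproved.
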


 \begin{remark}
 For $\mathcal G\in\tors(\mathcal W)$, the subcategory $\Fac U
\ast\mathcal G$ is actually the smallest torsion class in $\mod A$  containing both $U$ and $\mathcal G$, cf. \cite{Jasso_2014}*{Remark 3.13}.
 \end{remark}

\begin{definition} [Relative left Bongartz completion] \label{def:left}
Let $(U,Q)$ be a basic $\tau$-rigid pair and $\mathcal W:=U^\bot\cap\prescript{\bot}{}({\tau U})\cap Q^\bot$ the associated wide subcategory of $\mod A$. Let $(M,P)$ be a basic $\tau$-tilting pair in $\mod A$ such that $\Fac U\ast(\mathcal W\cap\Fac M)$ is a functorially finite torsion class in $\ftors_{(U,Q)}(A)$.
 Then by Proposition \ref{prominmax}, we know that $(U,Q)$ is a direct summand of the basic $\tau$-tilting pair $(M^-,P^-)$ corresponding to the functorially finite torsion class $\Fac U\ast(\mathcal W\cap\Fac M)$ in $\mod A$. We call $(M^-,P^-)$ the {\em left Bongartz completion} of $(U,Q)$ with respect to $(M,P)$ and denote it by  $$B^-_{(U,Q)}(M,P):=(M^-,P^-).$$
\end{definition}

In the following proposition, we will show that $\Fac U\ast(\mathcal W\cap\Fac M)$  is always a torsion class in $\tors_{(U,Q)}(A)$
and we give a sufficient condition  such that it is functorially finite.

\begin{proposition} \label{pro:finite} Let $(U,Q)$ be a basic $\tau$-rigid pair and $\mathcal W:=U^\bot\cap\prescript{\bot}{}({\tau U})\cap Q^\bot$ the associated wide subcategory of $\mod A$. Let  $(M,P)$ be a basic $\tau$-tilting pair in $\mod A$. Then the following statements hold.
\begin{itemize}
\item[(i)] $\mathcal W\cap \Fac M$ is a torsion class in $\tors(\mathcal W)$
 and $\Fac U\ast(\mathcal W\cap\Fac M)$  is a torsion class in  $\tors_{(U,Q)}(A)$.

\item[(ii)] Any $W\in\mathcal W$ has a left $U^\bot\cap \Fac M$-approximation.
    
    \item[(iii)]  The following conditions are equivalent.
    \begin{itemize}
        \item [(a)]  $U^\bot\cap\Fac M$ equals $\mathcal W\cap \Fac M$;
        \item[(b)] $U^\bot\cap\Fac M$ is a subcategory of $\mathcal W$;
        \item[(c)] $U^\bot\cap\Fac M$ is a subcategory of $\prescript{\bot}{}({\tau U})\cap Q^\bot$;
        \item[(d)] $\Fac M$ is a subcategory of $\prescript{\bot}{}({\tau U})\cap Q^\bot$.
    \end{itemize}
    \end{itemize}
Now we suppose that $\Fac M$ is a subcategory of $\prescript{\bot}{}({\tau U})\cap Q^\bot$. Then we further have
    \begin{itemize}
    \item[(iv)]   $\mathcal W\cap \Fac M$ is a functorially finite torsion class in $\ftors(\mathcal W)$ and $\Fac U\ast(\mathcal W\cap\Fac M)$ is a functorially finite torsion class in  $\ftors_{(U,Q)}(A)$. 
         \item[(v)] The left Bongartz completion $B_{(U,Q)}^-(M,P)$ of $(U,Q)$ with respect to $(M,P)$ exists and we have
         $$\Fac U\ast(\mathcal W\cap \Fac M)=\Fac U\ast(U^\bot\cap\Fac M)=\Fac U\ast\Fac M.$$
\end{itemize}
\end{proposition}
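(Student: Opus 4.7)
My plan is to handle (i)--(iii) as preparation, then derive (iv)--(v) under hypothesis (d). For (i), I use that $\Fac M$ is a functorially finite torsion class in $\mod A$ (Proposition 2.9) and that $\mathcal W$ is wide (Theorem 3.1(i)). Since $\mathcal W$ is closed under extensions and cokernels in $\mod A$, extensions and epimorphisms computed inside $\mathcal W$ coincide with those in $\mod A$, so $\mathcal W\cap\Fac M$ inherits closure under extensions and quotients from $\Fac M$. The second assertion then follows by applying the bijection of Theorem 3.1(iv) to the torsion class $\mathcal W\cap\Fac M\in\tors(\mathcal W)$.

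The key technical step is (ii). Since $\Fac M$ is functorially finite in $\mod A$, choose a left $\Fac M$-approximation $f\colon W\to X$. Using the torsion pair $(\Fac U,U^\bot)$ in $\mod A$ (valid because $\Fac U$ is a functorially finite torsion class by Proposition 2.9), let $\pi\colon X\to X/t_U(X)$ be the canonical surjection onto the torsion-free part; the target lies in $U^\bot\cap\Fac M$ since $\Fac M$ is closed under quotients. I claim $\pi f$ is a left $U^\bot\cap\Fac M$-approximation: for any $g\colon W\to Y$ with $Y\in U^\bot\cap\Fac M$, the $\Fac M$-approximation property yields $h\colon X\to Y$ with $g=hf$; because $Y\in U^\bot$ and $t_U(X)\in\Fac U$, the map $h$ vanishes on $t_U(X)$ and so factors uniquely through $\pi$. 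For (iii), I run the cycle (a)$\Rightarrow$(b)$\Rightarrow$(c)$\Rightarrow$(d)$\Rightarrow$(a); the first two are immediate from $\mathcal W\subseteq U^\bot\cap\prescript{\bot}{}{(\tau U)}\cap Q^\bot$. For (c)$\Rightarrow$(d), decompose any $X\in\Fac M$ via $(\Fac U,U^\bot)$: the torsion part is in $\Fac U\subseteq\prescript{\bot}{}{(\tau U)}\cap Q^\bot$ (Theorem 2.5), the torsion-free part $X/t_U(X)\in U^\bot\cap\Fac M$ is in $\prescript{\bot}{}{(\tau U)}\cap Q^\bot$ by (c), and closure under extensions gives $X\in\prescript{\bot}{}{(\tau U)}\cap Q^\bot$. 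The implication (d)$\Rightarrow$(a) is obtained by intersecting (d) with $U^\bot$ and invoking the definition of $\mathcal W$.

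Under (d), part (iv) falls out: contravariant finiteness of $\mathcal W\cap\Fac M$ in $\mathcal W$ is automatic from (i), and covariant finiteness is precisely (ii) after identifying $U^\bot\cap\Fac M$ with $\mathcal W\cap\Fac M$ via (iii)(a); Theorem 3.1(iv) then promotes this to functorial finiteness in $\ftors_{(U,Q)}(A)$. Definition 3.2 now gives the existence of $B^-_{(U,Q)}(M,P)$ in (v), and (iii)(a) supplies the first equality in $\Fac U\ast(\mathcal W\cap\Fac M)=\Fac U\ast(U^\bot\cap\Fac M)=\Fac U\ast\Fac M$. For the second equality, given a short exact sequence $0\to X\to N\to Y\to 0$ with $X\in\Fac U$ and $Y\in\Fac M$, I replace $Y$ by $Y/t_U(Y)\in U^\bot\cap\Fac M$; letting $X'$ be the kernel of the composite $N\twoheadrightarrow Y\twoheadrightarrow Y/t_U(Y)$, a direct diagram chase gives $X'/X\cong t_U(Y)\in\Fac U$, hence $X'\in\Fac U$ by extension-closure. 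The main obstacle is the approximation construction in (ii): it is the technical engine driving (iv) and (v), whereas the remaining parts amount to careful bookkeeping between the two interlocking torsion-pair decompositions $(\Fac U,U^\bot)$ and $(\Fac M,M^\bot)$.
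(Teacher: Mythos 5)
Your proposal is correct and follows essentially the same route as the paper's proof: the same left $\Fac M$-approximation composed with the projection onto the $(\Fac U,U^\bot)$-torsion-free part for (ii), the same canonical-sequence argument for (c)$\Rightarrow$(d) in (iii), and the same reduction of (iv)--(v) to (ii)+(iii) together with Theorem \ref{thm:Jasso}(iv). The only divergence is at the final equality $\Fac U\ast(U^\bot\cap\Fac M)=\Fac U\ast\Fac M$, where the paper defers to the argument of \cite{cao_2021}*{Proposition 6.8} while you give a self-contained (and correct) diagram chase via $X'=\ker\bigl(N\twoheadrightarrow Y/t_U(Y)\bigr)$, using extension-closure of $\Fac U$; just note explicitly that the reverse inclusion $\Fac U\ast(U^\bot\cap\Fac M)\subseteq\Fac U\ast\Fac M$ is immediate from $U^\bot\cap\Fac M\subseteq\Fac M$.
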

\begin{proof}

(i) It is straightforward to check that $\mathcal W\cap\Fac M$ is closed under extensions and epimorphisms in $\mathcal W$. So $\mathcal W\cap\Fac M$ is a torsion class in  $\tors(\mathcal W)$. Then by Theorem \ref{thm:Jasso} (iv), we know that $\Fac U\ast(\mathcal W\cap\Fac M)$  is a torsion class in $\tors_{(U,Q)}(A)$.

(ii)  Fix a module  $W\in\mathcal W$. Since $\Fac M$ is a functorially finite torsion class in $\mod A$, there exists a morphism  $f:W\rightarrow N$ in $\mod A$ such that $f$ is a left $\Fac M$-approximation of $W$. Let 
 $$\xymatrix{0\ar[r]&N_t\ar[r]^i&N\ar[r]^p&N_f\ar[r]&0}$$
 be the canonical sequence of $N$ with respect to the torsion pair $(\Fac U, U^\bot)$ in $\mod A$, where $N_t\in \Fac U$ and $N_f\in U^\bot$.
 Since $N$ is in $\Fac M$ and $\Fac M$ is closed under quotient modules, we get $N_f\in \Fac M$ and thus $N_f\in U^\bot\cap\Fac M$. 
 
 Now we show that $pf:W\rightarrow N_f$ is a left $U^\bot\cap\Fac M$-approximation of $W$ by explaining the following diagram
 $$\xymatrix{0\ar[r]&N_t\ar[r]^i&N\ar[r]^p\ar[rd]^{g_1}&N_f\ar[r]\ar[d]^{g_2}&0.\\
 &&W\ar[u]^f\ar[r]^g&X&}$$
 For any $X\in U^\bot\cap\Fac M$ and any morphism $g: W\rightarrow X$, we need to show that $g$ factors through $pf$. Since $X$ is in $\Fac M$ and $f:W\rightarrow N$ is a left $\Fac M$-approximation of $W$, we know that $g:W\rightarrow X$ factors through $f$, i.e., there exists a morphism $g_1:N\rightarrow X$ such that $g=g_1 f$. Since $N_t\in \Fac U$  and $X\in U^\bot$, we have $g_1 i=0$. Thus $g_1$ factors through $p$, which is the cokernel of $i$. Hence, there exists $g_2: N_f\rightarrow X$ such that $g_1=g_2 p$. So we have $$g=g_1 f=g_2 pf.$$
 So $g$ factors through $pf$ and thus $pf:W\rightarrow N_f$ is a left $U^\bot\cap\Fac M$-approximation of $W$.

(iii) The equivalences ``$(a)\Longleftrightarrow(b)\Longleftrightarrow(c)$" are clear, because $\mathcal W=U^\bot\cap\prescript{\bot}{}({\tau U})\cap Q^\bot$.

``$(c)\Longrightarrow(d)$": Assume that $U^\bot\cap\Fac M$ is a subcategory of $\prescript{\bot}{}({\tau U})\cap Q^\bot$. We need to prove $\Fac M$ is a subcategory of $\prescript{\bot}{}({\tau U})\cap Q^\bot$. For any $L\in\Fac M$, let 
$$0\rightarrow L_t\rightarrow L \rightarrow L_f \rightarrow 0$$
be the canonical sequence of $L$ with respect to the torsion pair $(\Fac U, U^\bot)$, where $L_t\in\Fac U$ and $L_f\in U^\bot$. Since $\Fac M$ is closed under quotient modules and by $L\in \Fac M$, we get $L_f\in\Fac M$ and thus $L_f\in U^\bot\cap \Fac M$.
Now we have $L_t\in\Fac U\subseteq \prescript{\bot}{}({\tau U})\cap Q^\bot$ and $L_f\in  U^\bot\cap \Fac M\subseteq \prescript{\bot}{}({\tau U})\cap Q^\bot$.
Since $\prescript{\bot}{}({\tau U})\cap Q^\bot$ is a torsion class in $\mod A$, which is closed under extensions, we get $L\in \prescript{\bot}{}({\tau U})\cap Q^\bot$. Hence, $\Fac M$ is a subcategory of $\prescript{\bot}{}({\tau U})\cap Q^\bot$.

``$(d)\Longrightarrow(c)$": This is  clear. Hence, the conditions in (iii) are equivalent.

(iv) We first show that $\mathcal W\cap\Fac M\in\ftors(\mathcal W)$. By (i), we have $\mathcal W\cap\Fac M\in \tors(\mathcal W)$. It still needs to check that $\mathcal W\cap\Fac M$ is  functorially finite in $\mathcal W$. Since $\mathcal W\cap\Fac M$ is a torsion class in $\mathcal W$, it suffices to check that $\mathcal W\cap\Fac M$ is covariantly finite in $\mathcal W$, that is, we need to show that any $W\in\mathcal W$ has a left $\mathcal W\cap\Fac M$-approximation.

Since $\Fac M$ is a subcategory of $\prescript{\bot}{}({\tau U})\cap Q^\bot$ and by (iii), we have the equality  $$U^\bot\cap\Fac M=\mathcal W\cap \Fac M.$$
By (ii), we know that $U^\bot\cap\Fac M=\mathcal W\cap \Fac M$ is a  covariantly finite subcategory of $\mathcal W$. Hence, $\mathcal W\cap \Fac M$ is a functorially finite torsion class in $\ftors(\mathcal W)$. 

By Theorem \ref{thm:Jasso} (iv), we get that $\Fac U\ast(\mathcal W\cap\Fac M)$  is a functorially finite torsion class in $\ftors_{(U,Q)}(A)$.

 (v) By (iv) and
 the definition of relative left Bongartz completion, we know that the left Bongartz completion $B_{(U,Q)}^-(M,P)$ of $(U,Q)$ with respect to $(M,P)$ exists and it corresponds to the functorially finite torsion class $\Fac U\ast(\mathcal W\cap \Fac M)$ in $\mod A$.

Since  $\Fac M$ is a subcategory of $\prescript{\bot}{}({\tau U})\cap Q^\bot$ and by (iii), we have  $U^\bot\cap\Fac M=\mathcal W\cap \Fac M$. Thus the equality $\Fac U\ast(\mathcal W\cap \Fac M)=\Fac U\ast(U^\bot\cap\Fac M)$ holds.
The proof of the equality $\Fac U\ast\Fac M=\Fac U\ast(U^\bot\cap\Fac M)$ is similar to the proof of \cite{cao_2021}*{Proposition 6.8}.
\end{proof}

\begin{remark} By Proposition \ref{pro:finite} (iv)(v), the inclusion $\Fac M\subseteq\prescript{\bot}{}({\tau U})\cap Q^\bot$ is a sufficient condition such that $\Fac U\ast(\mathcal W\cap\Fac M)$ is a functorially finite torsion class in $\ftors_{(U,Q)}(A)$ and we have  $\Fac U\ast(\mathcal W\cap\Fac M)=\Fac U\ast \Fac M$ in this case.
There are two very extreme cases such that the inclusion $\Fac M\subseteq\prescript{\bot}{}({\tau U})\cap Q^\bot$ is automatically valid.
\begin{itemize}

    \item [(i)] If we take $(M,P)=(0,A)$, then $\Fac M=0$ and thus
    the inclusion $0=\Fac M\subseteq\prescript{\bot}{}({\tau U})\cap Q^\bot$ holds for any basic $\tau$-rigid pair $(U,Q)$.
      In this case, the left Bongartz completion $B_{(U,Q)}^-(0,A)$ of $(U,Q)$ with respect to $(0,A)$ can be defined and it is just the basic $\tau$-tilting pair corresponding to the functorially finite torsion class $\Fac U$ in $\mod A$. Hence, it coincides with the absolute left Bongartz completion of $(U,Q)$ given in Definition \ref{def:comp}.
    \item[(ii)]  If we take $(U,Q)\in \add (A,0)$, then $\prescript{\bot}{}({\tau U})\cap Q^\bot=\mod A$ and thus
      the inclusion $\Fac M\subseteq\prescript{\bot}{}({\tau U})\cap Q^\bot$ holds for any basic $\tau$-tilting pair $(M,P)$.   In this case, the left Bongartz completion $B_{(U,Q)}^-(M,P)$ of $(U,Q)$ with respect to $(M,P)$ can be defined and it coincides with the one introduced in \cite{cao_2021}*{Definition 6.9} by the first named author of this paper. 
\end{itemize}

The relative left Bongartz completion in Definition \ref{def:left} unify the two very extreme cases in (i) and (ii) and include many other cases.
\end{remark}

\begin{corollary}\label{cortrivial}
Let $(M,P)$ be a basic $\tau$-tilting pair in $\mod A$ and $(U,Q)$ a direct summand of $(M,P)$. Then the left Bongartz completion $B_{(U,Q)}^-(M,P)$ of $(U,Q)$ with respect to $(M,P)$ exists and we have $B_{(U,Q)}^-(M,P)=(M,P)$.
\end{corollary}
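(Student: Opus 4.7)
The plan is to verify the hypotheses of Proposition \ref{pro:finite} and then identify the resulting torsion class as $\Fac M$ itself, so that the bijection $\Psi$ from Theorem \ref{proorder} forces $B_{(U,Q)}^-(M,P) = (M,P)$.

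First I would observe that because $(U,Q)$ is a direct summand of the basic $\tau$-tilting pair $(M,P)$, Proposition \ref{prominmax} (ii) immediately gives the chain
\[
\Fac U \;\subseteq\; \Fac M \;\subseteq\; \prescript{\bot}{}({\tau U}) \cap Q^\bot.
\]
In particular, $\Fac M \subseteq \prescript{\bot}{}({\tau U}) \cap Q^\bot$, so the sufficient condition in Proposition \ref{pro:finite} (iv)(v) is met. This yields existence of $B_{(U,Q)}^-(M,P)$ and the identification
\[
\Fac U \ast (\mathcal W \cap \Fac M) \;=\; \Fac U \ast \Fac M.
\]

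Next I would reduce $\Fac U \ast \Fac M$ to $\Fac M$. The inclusion $\Fac M \subseteq \Fac U \ast \Fac M$ is trivial (take the torsion subobject to be zero). Conversely, any $L \in \Fac U \ast \Fac M$ sits in a short exact sequence $0 \to L' \to L \to L'' \to 0$ with $L' \in \Fac U$ and $L'' \in \Fac M$; since $\Fac U \subseteq \Fac M$ we also have $L' \in \Fac M$, and $\Fac M$ is closed under extensions as a torsion class, so $L \in \Fac M$. Thus $\Fac U \ast \Fac M = \Fac M$.

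Combining, $\Fac U \ast (\mathcal W \cap \Fac M) = \Fac M$. By the definition of the relative left Bongartz completion and the bijection $\Psi$ in Theorem \ref{proorder} (ii) between basic $\tau$-tilting pairs and functorially finite torsion classes, the pair $B_{(U,Q)}^-(M,P)$ corresponds to $\Fac M$, which is also the image of $(M,P)$ under $\Psi$. Hence $B_{(U,Q)}^-(M,P) = (M,P)$. There is no serious obstacle here; the only thing to keep in mind is that the proof is an immediate combination of the two preceding results, and the ``trivial'' direction relies on $\Fac M$ being closed under extensions.
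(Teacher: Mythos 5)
Your proposal is correct and follows essentially the same route as the paper: apply Proposition \ref{prominmax} to get $\Fac U\subseteq\Fac M\subseteq\prescript{\bot}{}({\tau U})\cap Q^\bot$, invoke Proposition \ref{pro:finite} (v) for existence and the identification with $\Fac U\ast\Fac M$, and then collapse this to $\Fac M$ (the paper writes this as $\Fac M\subseteq\Fac U\ast\Fac M\subseteq\Fac M\ast\Fac M=\Fac M$, which is exactly the extension-closure argument you spell out) before concluding via the bijection $\Psi$.
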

\begin{proof}
Since $(U,Q)$ is a direct summand of $(M,P)$ and by Proposition \ref{prominmax}, we have
$$\Fac U\subseteq \Fac M \subseteq \prescript{\bot}{}({\tau U})\cap Q^\bot.$$
 Then by Proposition \ref{pro:finite} (v), the left Bongartz completion $B_{(U,Q)}^-(M,P)$ of $(U,Q)$ with respect to $(M,P)$ exists and $B_{(U,Q)}^-(M,P)$ is the basic $\tau$-tilting pair corresponding to the functorially finite torsion class $\Fac U\ast\Fac M$. By $\Fac M\subseteq \Fac U\ast\Fac M\subseteq \Fac M\ast \Fac M=\Fac M$, 
we know that the two basic $\tau$-tilting pairs $B_{(U,Q)}^-(M,P)$ and $(M,P)$ correspond to the same  functorially finite torsion class. Hence, we have  $B_{(U,Q)}^-(M,P)=(M,P)$.
\end{proof}

In the rest part of this subsection, we show that the relative left Bongartz completions always exist for $\tau$-tilting finite algebras.

\begin{definition}[\cite{DIJ_2017}*{Definition 1.1}]
We say that $A$ is {\em $\tau$-tilting finite} if there are only finitely many basic $\tau$-tilting pairs in $\mod A$ up to isomorphisms, equivalently, $\mod A$ has only finitely many functorially finite torsion classes.
\end{definition}

\begin{theorem}[\cite{DIJ_2017}*{Theorem 1.2}
\label{thm:dij1.2}] 
The algebra $A$ is $\tau$-tilting finite if and only if every torsion class in $\mod A$ is functorially finite. 
\end{theorem}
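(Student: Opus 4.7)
My plan is to prove both implications of the biconditional separately, translating between basic $\tau$-tilting pairs and functorially finite torsion classes via the bijection of Theorem \ref{proorder}.

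I would first tackle the direction ``$\tau$-tilting finite $\Rightarrow$ every torsion class is functorially finite''. Assuming $\ftors(A) = \{\mathcal T_1,\ldots,\mathcal T_N\}$ is finite, I would show that an arbitrary $\mathcal T \in \tors(A)$ coincides with the join
\[\mathcal S := \bigvee\{\mathcal T_i : \mathcal T_i \subseteq \mathcal T\}\]
in the complete lattice $\tors(A)$. Since $\ftors(A)$ is closed under the finite joins entering this expression, $\mathcal S \in \ftors(A)$, and evidently $\mathcal S \subseteq \mathcal T$. For the reverse inclusion, I would invoke the classical Auslander--Smal\o\ result that for any $M \in \mod A$, the smallest torsion class $T(M) := \Filt(\Fac M)$ containing $M$ is functorially finite. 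Then each $M \in \mathcal T$ satisfies $T(M) \subseteq \mathcal T$ with $T(M) \in \ftors(A)$, so $T(M)$ participates in the join defining $\mathcal S$, giving $M \in T(M) \subseteq \mathcal S$. Hence $\mathcal T = \mathcal S$ is functorially finite.

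For the converse ``every torsion class is functorially finite $\Rightarrow$ $\tau$-tilting finite'', I would argue by contradiction. Suppose $\tors(A) = \ftors(A)$ is infinite. The crucial step is to produce an infinite strictly ascending chain $\mathcal T_1 \subsetneq \mathcal T_2 \subsetneq \cdots$ in $\tors(A)$. Granted such a chain, set $\mathcal T := \bigcup_{i \geq 1} \mathcal T_i$; since the union is directed, $\mathcal T$ is closed under extensions and quotients, hence is a torsion class. By hypothesis $\mathcal T \in \ftors(A)$, so by Theorem \ref{proorder} $(iii)$ one has $\mathcal T = \Fac M$ for some $M$. But $M \in \mathcal T = \bigcup \mathcal T_i$ forces $M \in \mathcal T_j$ for some $j$, whence $\Fac M \subseteq \mathcal T_j \subsetneq \mathcal T = \Fac M$, a contradiction.

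The main obstacle is the chain-extraction step in the converse: an infinite poset need not contain an infinite chain, so the absence of infinite antichains must be secured from the specific structure of $\tors(A)$. I would accomplish this via the brick labelling of Hasse arrows in $\tors(A)$ developed in Demonet--Iyama--Reading--Reiten--Thomas: infinitely many torsion classes force infinitely many bricks, from which a K\"onig-type compactness argument on brick filtrations produces the desired chain. A secondary concern is the appeal to Auslander--Smal\o\ in the forward direction; if one prefers to avoid this as a black box, one can instead run a parallel induction on the depth of the (finite) Hasse quiver of $\ftors(A)$ to show directly that every torsion class admits a finite generator.
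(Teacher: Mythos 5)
The paper itself gives no proof of Theorem \ref{thm:dij1.2}; it is imported wholesale from \cite{DIJ_2017}, so the only comparison possible is with the argument there, and measured on its own terms your proposal has genuine gaps in both directions. In the forward direction, the pivotal claim attributed to Auslander--Smal\o{} --- that for every $M\in\mod A$ the smallest torsion class $\Filt(\Fac M)$ containing $M$ is functorially finite --- is not a theorem of theirs and is false in general: over the Kronecker algebra, with $M$ a regular simple module, $\Filt(\Fac M)$ is contained in the torsion class consisting of the modules supported on the tube of $M$ together with the preinjectives, whereas every functorially finite torsion class containing a regular module contains the entire regular part; hence $\Filt(\Fac M)$ is not functorially finite. (What Auslander--Smal\o{} do provide is essentially the opposite implication: a torsion class of the form $\Fac M$ is functorially finite.) The companion assertion that $\ftors(A)$ is closed under finite joins is equally unjustified: the join of $\Fac M$ and $\Fac N$ in $\tors(A)$ is precisely $\Filt(\Fac(M\oplus N))$, i.e.\ the very object whose functorial finiteness is in question, and in general joins and meets of functorially finite torsion classes need not remain functorially finite. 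Under the hypothesis that $A$ is $\tau$-tilting finite both claims do become true, but only as consequences of the theorem being proved, or of the key lemma actually used in \cite{DIJ_2017} (for every $\mathcal T\in\tors(A)$ and every functorially finite $\mathcal U\subsetneq\mathcal T$ there is a functorially finite torsion class covering $\mathcal U$ and contained in $\mathcal T$; climbing from $\mathcal U=0$ and using finiteness of $\ftors(A)$ forces the chain to terminate at $\mathcal T$). As written, this direction is circular.

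For the converse, your skeleton is exactly the standard one and is fine: the union of a strictly ascending chain of torsion classes is a torsion class, and if it were functorially finite it would equal $\Fac M$ by Theorem \ref{proorder}, with $M$ lying in some $\mathcal T_j$, a contradiction. But the entire difficulty is the chain extraction, and there the proposal only gestures. ``Infinitely many torsion classes force infinitely many bricks'' is Theorem 1.4 of \cite{DIJ_2017}, established in the same paper alongside (not prior to) the statement you are proving, and the brick labelling of \cite{DIRRT} postdates and partly relies on it, so invoking these is circular for a blind proof; moreover ``a K\"onig-type compactness argument on brick filtrations'' is a slogan, not a step. The non-circular route goes through mutation theory: Hasse arrows of $\tors(A)$ between functorially finite torsion classes correspond to mutations of the associated basic $\tau$-tilting pairs (cf.\ Theorem \ref{thmair} and Lemma \ref{lem:brick}), so each such class has at most $|A|$ covers; the Bongartz-type lemma above lets one climb from $0$ toward any prescribed functorially finite torsion class through saturated chains inside $\ftors(A)$; if $\ftors(A)$ is infinite this finitely branching tree of saturated chains has chains of unbounded length (or some climb never terminates), and K\"onig's lemma then yields the infinite ascending chain. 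Supplying that step is where the actual content of the theorem lies, and it is missing from the proposal.
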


\begin{corollary}\label{cor:dij}
Keep the notations in Proposition \ref{pro:finite}. Suppose that the algebra $A$ is $\tau$-tilting finite. Then $\Fac U\ast(\mathcal W\cap\Fac M)$ is a functorially finite torsion class in  $\ftors_{(U,Q)}(A)$ and the left Bongartz completion $B_{(U,Q)}^-(M,P)$ of $(U,Q)$ with respect to $(M,P)$ exists.
\end{corollary}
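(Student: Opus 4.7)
The plan is to reduce this corollary essentially to combining Proposition \ref{pro:finite}(i) with Theorem \ref{thm:dij1.2}; almost all the work has already been done in the previous proposition, and the $\tau$-tilting finiteness hypothesis is only needed to upgrade ``torsion class'' to ``functorially finite torsion class.''

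First, I would invoke Proposition \ref{pro:finite}(i) to conclude, with no hypothesis on $A$, that $\mathcal W \cap \Fac M$ lies in $\tors(\mathcal W)$ and that $\Fac U \ast (\mathcal W \cap \Fac M)$ lies in $\tors_{(U,Q)}(A) \subseteq \tors(A)$. In particular it is an honest torsion class of $\mod A$ sitting between $\Fac U$ and $\prescript{\bot}{}(\tau U) \cap Q^{\bot}$.

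Next, I would use the hypothesis that $A$ is $\tau$-tilting finite: by Theorem \ref{thm:dij1.2}, every torsion class in $\mod A$ is automatically functorially finite. Applying this to the torsion class $\Fac U \ast (\mathcal W \cap \Fac M)$ from the previous step gives $\Fac U \ast (\mathcal W \cap \Fac M) \in \ftors(A)$, and combining this with the containment $\Fac U \ast (\mathcal W \cap \Fac M) \in \tors_{(U,Q)}(A)$ already established shows $\Fac U \ast (\mathcal W \cap \Fac M) \in \ftors_{(U,Q)}(A)$, which is the first claim.

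Finally, to conclude that the left Bongartz completion $B^-_{(U,Q)}(M,P)$ exists, I would simply cite Definition \ref{def:left}: its existence requirement is precisely that $\Fac U \ast (\mathcal W \cap \Fac M)$ be a functorially finite torsion class in $\ftors_{(U,Q)}(A)$, which we have just verified. There is no real obstacle here; the only subtle point to highlight is that, unlike in Proposition \ref{pro:finite}(iv)(v), we do \emph{not} assume $\Fac M \subseteq \prescript{\bot}{}(\tau U) \cap Q^{\bot}$, so we cannot identify $\Fac U \ast (\mathcal W \cap \Fac M)$ with $\Fac U \ast \Fac M$ in this generality—$\tau$-tilting finiteness is precisely what replaces that hypothesis in order to guarantee functorial finiteness.
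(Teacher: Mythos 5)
Your proposal is correct and follows exactly the paper's own argument: Proposition \ref{pro:finite}(i) gives that $\Fac U\ast(\mathcal W\cap\Fac M)$ is a torsion class in $\tors_{(U,Q)}(A)$, Theorem \ref{thm:dij1.2} upgrades it to a functorially finite one under $\tau$-tilting finiteness, and Definition \ref{def:left} then yields existence of $B^-_{(U,Q)}(M,P)$. Your closing remark that one cannot identify this class with $\Fac U\ast\Fac M$ without the hypothesis $\Fac M\subseteq \prescript{\bot}{}(\tau U)\cap Q^{\bot}$ is accurate but does not change the argument.
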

\begin{proof}
By Proposition \ref{pro:finite} (i), we know that $\Fac U\ast(\mathcal W\cap\Fac M)$ is a torsion class in  $\tors_{(U,Q)}(A)$. Since $A$ is $\tau$-tilting finite and by Theorem \ref{thm:dij1.2}, we know that $\Fac U\ast(\mathcal W\cap\Fac M)$ is a functorially finite torsion class in  $\ftors_{(U,Q)}(A)$. Thus  the left Bongartz completion $B_{(U,Q)}^-(M,P)$ of $(U,Q)$ with respect to $(M,P)$ exists.
\end{proof}

\subsection{Compatibility between relative left Bongartz completions and mutations}
In this subsection, we show that the relative left Bongartz completions have nice compatibility with mutations. 

Recall that a module $D\in\mod A$ is called a {\em brick}, if $\End_A(D)$ is a division ring.
\begin{lemma}\label{lem:brick}
Let $(M,P)$ and $(N,R)$ be two basic $\tau$-tilting pairs in $\mod A$. Then the following conditions are equivalent.

\begin{itemize}
    \item[(a)] $(N,R)$ is a left mutation of $(M,P)$.
    \item[(b)] $\Fac N\subsetneq \Fac M$ is a cover, i.e., there exists no torsion class $\mathcal T$ in $\mod A$ such that  $$\Fac N\subsetneq\mathcal T\subsetneq \Fac M.$$
    \item[(c)] There exists a unique brick $D$ in $N^\bot \cap\Fac M$ up to isomorphisms. In this case, we actually have $N^\bot \cap\Fac M=\Filt(D)$, where $\Filt(D)$ is the smallest full subcategory of $\mod A$ containing $D$ and closed under extensions.
\end{itemize}
\end{lemma}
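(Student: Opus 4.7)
The plan is to run the cycle (a) $\Rightarrow$ (b) $\Rightarrow$ (c) $\Rightarrow$ (a), using the bijection $\Psi$ of Theorem \ref{proorder} between basic $\tau$-tilting pairs and functorially finite torsion classes together with the brick-labelling of covers in the torsion class lattice, due to Asai and to Demonet--Iyama--Reading--Reiten--Thomas.

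For (a) $\Rightarrow$ (b), I take the common basic almost $\tau$-tilting summand $(U,Q)$ of $(M,P)$ and $(N,R)$. By Theorem \ref{thmair}, $\Fac N = \Fac U$ and $\Fac M = \prescript{\bot}{}{(\tau U)}\cap Q^\bot$. Proposition \ref{prominmax} identifies any functorially finite torsion class $\mathcal T$ with $\Fac U \subseteq \mathcal T \subseteq \prescript{\bot}{}{(\tau U)}\cap Q^\bot$ with a basic $\tau$-tilting pair containing $(U,Q)$ as a direct summand, and Theorem \ref{thmair} shows that there are exactly two such pairs, so no intermediate functorially finite torsion class exists. To exclude intermediate non-functorially-finite torsion classes I would invoke the brick-label principle, which attaches to any cover in $\ftors(A)$ a unique brick of $\mod A$ and thereby shows that such a cover persists as a cover in all of $\tors(A)$.

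For (b) $\Rightarrow$ (c), I would apply the brick-labelling theorem for torsion-class covers directly: whenever $\mathcal T_1 \lessdot \mathcal T_2$ is a cover in $\tors(A)$, the subcategory $\mathcal T_1^\bot \cap \mathcal T_2$ is a simple wide subcategory of the form $\Filt(D)$ for a unique brick $D$. Setting $\mathcal T_1 = \Fac N$ and $\mathcal T_2 = \Fac M$ immediately yields (c). For (c) $\Rightarrow$ (a), if $N^\bot \cap \Fac M = \Filt(D)$ for a brick $D$, then any torsion class $\mathcal T$ with $\Fac N \subseteq \mathcal T \subseteq \Fac M$ restricts to a torsion class of the wide subcategory $\Filt(D)$; since $D$ is a brick, $\Filt(D)$ has only the two trivial torsion classes $0$ and itself, so $\mathcal T \in \{\Fac N,\Fac M\}$. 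Hence $\Fac N \lessdot \Fac M$ is a cover in $\ftors(A)$, and Theorem \ref{proorder} together with Theorem \ref{thmair} then forces $(N,R)$ and $(M,P)$ to share a basic almost $\tau$-tilting summand, so $(N,R)$ is a left mutation of $(M,P)$.

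The main obstacle is the non-functorially-finite case in (a) $\Rightarrow$ (b): ruling out a torsion class of $\mod A$ strictly between two functorially finite torsion classes that form a cover in $\ftors(A)$ cannot be done with Adachi--Iyama--Reiten alone and genuinely requires the brick-label machinery. The remaining implications are essentially formal consequences once the labelling theorem is granted, with (c) $\Rightarrow$ (a) needing only a brief check that intersecting a torsion class of $\mod A$ with $\Filt(D)$ gives a torsion class of the wide subcategory $\Filt(D)$.
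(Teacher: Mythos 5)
Your proposal is correct and in substance coincides with the paper's own proof, which is purely a citation: the equivalence (a)$\Leftrightarrow$(b) is quoted from \cite{adachi_iyama_reiten_2014}*{Theorem 2.33} and (b)$\Leftrightarrow$(c) from the brick-labelling results of \cite{DIRRT}*{Theorem 3.3 (b)} and \cites{asai_2018,BCZ_2019}, i.e.\ exactly the two external inputs your cycle of implications rests on (your extra detail in (c)$\Rightarrow$(a) via canonical sequences relative to $(\Fac N, N^\bot)$ and the fact that $\Filt(D)$ has only the trivial torsion classes is fine and fillable). The one caveat: at the end of (c)$\Rightarrow$(a), passing from ``$\Fac N\subsetneq\Fac M$ is a cover'' to ``$(M,P)$ and $(N,R)$ share an almost $\tau$-tilting summand'' is \emph{not} a formal consequence of Theorems \ref{proorder} and \ref{thmair} as stated; that step is precisely the content of \cite{adachi_iyama_reiten_2014}*{Theorem 2.33} (equivalently the Hasse-arrow/mutation correspondence at functorially finite torsion classes), so you should cite it there, as the paper does.
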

\begin{proof}
The equivalence of (a) and (b) follows from \cite{adachi_iyama_reiten_2014}*{Theorem 2.33}. The equivalence of (b) and (c) can be found in many different references, for example, \cite{DIRRT}*{Theorem 3.3 (b)}, \cites{asai_2018,BCZ_2019}.
\end{proof}

\begin{theorem}\label{mainthm}
Let $(U,Q)$ be a basic $\tau$-rigid pair in $\mod A$ and $\mathcal W:=U^\bot\cap\prescript{\bot}{}({\tau U})\cap Q^\bot$ the associated wide subcategory.
Let  $(M,P)$ be a basic $\tau$-tilting pair in $\mod A$ and $(N,R)$ a left mutation of $(M,P)$. Denote by $D$ the unique brick in Lemma \ref{lem:brick}. Suppose that $\Fac M$ is a subcategory of $\prescript{\bot}{}({\tau U})\cap Q^\bot$.  Then the following statements hold.

\begin{itemize}
    \item [(i)]  If the brick $D$ does not belong to $U^\bot$, then $\mathcal W\cap\Fac M=\mathcal W\cap\Fac N$.

\item[(ii)] If the brick $D$ belongs to $U^\bot$, then $\mathcal W\cap\Fac M$ is a cover of $\mathcal W\cap\Fac N$ in the poset of torsion classes in  $\mathcal W$.

\item[(iii)] Either $B_{(U,Q)}^-(N,R)$ equals $B_{(U,Q)}^-(M,P)$ or $B_{(U,Q)}^-(N,R)$ is a left mutation of $B_{(U,Q)}^-(M,P)$. Namely, in the following commutative diagram,
$$\xymatrix{(N,R)\ar[d]^{B_{(U,Q)}^-}&&&(M,P)\ar[lll]_{\text{  left mutation}}\ar[d]^{B_{(U,Q)}^-}\\
B_{(U,Q)}^-(N,R)&&&B_{(U,Q)}^-(M,P)\ar[lll]_{\alpha}}$$
$\alpha$ is either an identity or a left mutation.
\end{itemize}
\end{theorem}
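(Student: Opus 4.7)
The plan is to compare the torsion classes $\mathcal G_M := \mathcal W \cap \Fac M$ and $\mathcal G_N := \mathcal W \cap \Fac N$ in $\tors(\mathcal W)$. By Proposition \ref{pro:finite}(v) and Theorem \ref{thm:Jasso}(iv), the left Bongartz completions $B_{(U,Q)}^-(M,P)$ and $B_{(U,Q)}^-(N,R)$ correspond respectively to $\Fac U \ast \mathcal G_M = \Fac U \ast \Fac M$ and $\Fac U \ast \mathcal G_N = \Fac U \ast \Fac N$ under the order-preserving bijection $\mathcal G \mapsto \Fac U \ast \mathcal G$. Clearly $\mathcal G_N \subseteq \mathcal G_M$, so the question is how much bigger $\mathcal G_M$ is, and the candidate ``difference'' is controlled by the brick $D$ from Lemma \ref{lem:brick}.

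The key technical step, and the main obstacle, is to show that for any $X \in \mathcal G_M$ the $(\Fac N, N^\bot)$-torsion decomposition $0 \to X_t \to X \to X_f \to 0$ computed in $\mod A$ actually lives in $\mathcal W$ and has $X_f \in \Filt(D)$. The containment $X_f \in \Filt(D)$ is immediate since $X_f$ is a quotient of $X \in \Fac M$ lying in $N^\bot$ and $N^\bot \cap \Fac M = \Filt(D)$. To push $X_f$ into $U^\bot$, I will apply the Auslander--Reiten formula to $X_t$: from $X_t \in \Fac N \subseteq \Fac M \subseteq \prescript{\bot}{}{(\tau U)}$ we get $\Hom_A(X_t, \tau U) = 0$, hence $\Ext_A^1(U, X_t) = 0$; applying $\Hom_A(U, -)$ to the short exact sequence and using $X \in \mathcal W \subseteq U^\bot$ then forces $\Hom_A(U, X_f) = 0$. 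Combined with $X_f \in \Fac M \subseteq \prescript{\bot}{}{(\tau U)} \cap Q^\bot$, this gives $X_f \in \mathcal W$, and wideness of $\mathcal W$ places $X_t$ in $\mathcal W$ as well.

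For (i), when $D \notin U^\bot$, the bottom of the $D$-filtration embeds $D$ into any $0 \neq Z \in \Filt(D)$, so $\Hom_A(U, Z) \neq 0$ by left exactness and $\Filt(D) \cap U^\bot = 0$; hence $X_f = 0$, $X \in \mathcal G_N$, and $\mathcal G_M = \mathcal G_N$. For (ii), when $D \in U^\bot$, we have $D \in \Filt(D) \subseteq \Fac M \subseteq \prescript{\bot}{}{(\tau U)} \cap Q^\bot$, so $D \in \mathcal W$ and, by extension-closure, $\Filt(D) \subseteq \mathcal W$; the previous paragraph then yields the torsion decomposition of $X$ inside $\mathcal W$ with respect to $\mathcal G_N$ and identifies the torsion-free part with $\Filt(D)$. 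Since $D$ remains a brick in the wide subcategory $\mathcal W \simeq \mod A_{(U,Q)}$, Lemma \ref{lem:brick}(c) applied inside $\mod A_{(U,Q)}$ identifies $\mathcal G_M$ as a cover of $\mathcal G_N$ in $\tors(\mathcal W)$.

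Finally, for (iii), case (i) is immediate: $\mathcal G_M = \mathcal G_N$ forces $B_{(U,Q)}^-(M,P) = B_{(U,Q)}^-(N,R)$. In case (ii), I will verify the mutation relation directly in $\mod A$ via Lemma \ref{lem:brick}(c) by computing
$$(\Fac U \ast \Fac N)^\bot \cap (\Fac U \ast \Fac M) = U^\bot \cap N^\bot \cap (\Fac U \ast \Fac M) = \Filt(D),$$
where the first equality uses $(\mathcal A \ast \mathcal B)^\bot = \mathcal A^\bot \cap \mathcal B^\bot$ for extension-closed torsion classes together with $(\Fac U)^\bot = U^\bot$. For the second equality, any $Z$ in the middle term sits in a sequence $0 \to Z' \to Z \to Z'' \to 0$ with $Z' \in \Fac U$ and $Z'' \in \Fac M$; then $Z \in U^\bot$ forces $Z' \in \Fac U \cap U^\bot = 0$, so $Z \cong Z'' \in N^\bot \cap \Fac M = \Filt(D)$, while conversely $D \in U^\bot$ combined with extension-closure of $U^\bot$ and $N^\bot$ gives $\Filt(D) \subseteq U^\bot \cap N^\bot \cap \Fac M$. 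Since $D$ is a brick in $\mod A$, Lemma \ref{lem:brick}(c) concludes that $B_{(U,Q)}^-(N,R)$ is a left mutation of $B_{(U,Q)}^-(M,P)$.
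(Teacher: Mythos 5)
Your key technical step and part (i) coincide with the paper's own argument: the paper also takes the $(\Fac N,N^\bot)$-canonical sequence of $X\in U^\bot\cap\Fac M$, uses that $U$ is Ext-projective in $\prescript{\bot}{}{(\tau U)}$ (equivalently, your Auslander--Reiten formula step) to get $\Hom_A(U,X_f)=0$, and then uses $X_f\in N^\bot\cap\Fac M=\Filt(D)$ together with $D\notin U^\bot$ to force $X_f=0$. Where you genuinely diverge is in (ii) and (iii). The paper proves (ii) by hand inside the lattice $\tors(\mathcal W)$: for any torsion class $\mathcal T$ with $\mathcal W\cap\Fac N\subsetneq\mathcal T\subseteq\mathcal W\cap\Fac M$ it shows $D\in\mathcal T$ (the torsion-free part of any $Y\in\mathcal T\setminus(\mathcal W\cap\Fac N)$ is a nonzero object of $\Filt(D)$, which has $D$ as a quotient), hence $\Filt(D)\subseteq\mathcal T$, and then every $Z\in\mathcal W\cap\Fac M$ is an extension of an object of $\Filt(D)$ by an object of $\mathcal W\cap\Fac N$, so $\mathcal T=\mathcal W\cap\Fac M$; part (iii) is then deduced from (i)+(ii) via the order-preserving bijection of Theorem \ref{thm:Jasso}(iv) and the equivalence (a)$\Leftrightarrow$(b) of Lemma \ref{lem:brick}. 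You instead try to verify condition (c) of Lemma \ref{lem:brick} -- inside $\mod A_{(U,Q)}$ for (ii), and directly in $\mod A$ for (iii) via the computation $(\Fac U\ast\Fac N)^\bot\cap(\Fac U\ast\Fac M)=\Filt(D)$, which is correct as you state it.

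The gap is in the passage from ``this subcategory equals $\Filt(D)$'' to condition (c). Lemma \ref{lem:brick}(c) asks for a \emph{unique} brick in $N^\bot\cap\Fac M$; the clause $N^\bot\cap\Fac M=\Filt(D)$ is stated in the lemma as a consequence of (c), not as an equivalent reformulation, so your computation only yields (c) if you also know that a brick $D$ is the unique brick, up to isomorphism, in $\Filt(D)$. That statement is true (for instance, by Ringel's semibrick--wide subcategory correspondence $\Filt(D)$ is a length wide subcategory whose unique simple object is $D$, and a non-simple object there acquires a nonzero nilpotent endomorphism by composing a surjection onto $D$ with an inclusion of $D$ into its radical), but you neither prove nor cite it, and it is not supplied by the paper's Lemma \ref{lem:brick}; the paper's proof of (ii) is organized precisely so as to avoid needing it. Two smaller points in (ii): to invoke Lemma \ref{lem:brick} over $A_{(U,Q)}$ you should note that $\mathcal W\cap\Fac M$ and $\mathcal W\cap\Fac N$ are functorially finite in $\mathcal W$ (Proposition \ref{pro:finite}(iv)), so they really correspond to basic $\tau$-tilting pairs over $A_{(U,Q)}$, and you should prove both inclusions of the identification of the relative perpendicular $\bigl(\mathcal W\cap\Fac N\bigr)^\bot\cap\bigl(\mathcal W\cap\Fac M\bigr)$ (computed in $\mathcal W$) with $\Filt(D)$, not only that torsion-free parts lie in $\Filt(D)$. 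With these additions your route closes and is a legitimate alternative to the paper's more elementary lattice argument.
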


\begin{proof}

Since  $\Fac M$ is a subcategory of $\prescript{\bot}{}({\tau U})\cap Q^\bot$ and  $(N,R)$ is a left mutation of $(M,P)$, we have
$$\Fac N\subsetneq \Fac M\subseteq \prescript{\bot}{}({\tau U})\cap Q^\bot.$$
By  Proposition \ref{pro:finite} (iii), we have the equalities:
$$U^\bot\cap \Fac M=\mathcal W\cap \Fac M\;\;\;\text{and}\;\;\;U^\bot\cap \Fac N=\mathcal W\cap \Fac N.$$
By Proposition \ref{pro:finite} (v),  $B_{(U,Q)}^-(N,R)$ and $B_{(U,Q)}^-(M,P)$ can be defined.

(i) Since $\Fac N\subsetneq\Fac M$, we only need to show $\mathcal W\cap\Fac M\subseteq \mathcal W \cap\Fac N$. For any $X\in \mathcal W\cap\Fac M=U^\bot\cap \Fac M$, we will show $X\in\Fac N$ and thus $X\in \mathcal W\cap \Fac N$. Let us consider the canonical sequence of $X$ with respect to the torsion pair $(\Fac N, N^\bot)$ in $\mod A$
$$0\rightarrow X_t\rightarrow X\rightarrow X_f\rightarrow 0,$$
where $X_t\in\Fac N$ and $X_f\in N^\bot$. Since $\Fac M$ is closed under quotient modules and $X\in \Fac M$, we get  $X_f\in\Fac M$ and thus $X_f\in N^\bot \cap\Fac M=\Filt(D)$.

Now we show that $X_f=0$ using the condition $X\in U^\bot$. Because $U$ is $\tau$-rigid, we know that $U$ is Ext-projective in $\prescript{\bot}{}({\tau U})$. By
$$X_t\in  \Fac N\subsetneq  \Fac M\subseteq \prescript{\bot}{}({\tau U})\cap Q^\bot\subseteq \prescript{\bot}{}({\tau U}),$$
we get $\Ext_A^1(U,X_t)=0$. Applying the functor $\Hom_A(U,-)$ to $0\rightarrow X_t\rightarrow X\rightarrow X_f\rightarrow 0$, we get the following exact sequence:
$$0 \rightarrow \Hom_A(U,X_t)\rightarrow\Hom_A(U,X)\rightarrow\Hom_A(U,X_f)\rightarrow 0.$$
By $X\in U^\bot$, i.e., $\Hom_A(U,X)=0$, we get $\Hom_A(U,X_f)=0$. If $X_f\neq 0$, then it contains the brick $D$ as its submodule, by $X_f\in\Filt(D)$. This implies $\Hom_A(U, D)=0$, i.e., $D\in U^\bot$, which contradicts the assumption that $D$ does not belong to $U^\bot$. So we must have $X_f=0$ and thus $X\cong X_t\in\Fac N$. 

Hence, we have $X\in U^\bot\cap \Fac N=\mathcal W\cap \Fac N$ and thus $\mathcal W\cap\Fac M=\mathcal W\cap\Fac N$. 

(ii) By $\Fac N\subsetneq \Fac M$, we have $\mathcal W\cap\Fac N\subseteq \mathcal W\cap \Fac M$. Since $D$ is in $N^\bot \cap\Fac M$, we have $D\in \Fac M$ but $D\notin \Fac N$. By the assumption $D\in U^\bot$, we get $D\in U^\bot\cap \Fac M=\mathcal W\cap \Fac M$ but $D\notin U^\bot\cap\Fac N=\mathcal W\cap\Fac N$. Hence, $\mathcal W\cap\Fac N\subsetneq \mathcal W\cap \Fac M$. 

Let $\mathcal T$ be any torsion class in $\mathcal W$ such that 
$$\mathcal W\cap\Fac N\subsetneq \mathcal T\subseteq \mathcal W\cap\Fac M.$$
We will show that $\mathcal T=\mathcal W\cap\Fac M$ and thus $\mathcal W\cap\Fac M$ is a cover of $\mathcal W\cap\Fac N$ in the poset of torsion classes in $\mathcal W$.

We first show that the brick $D$ belongs to $\mathcal T$ and thus $\Filt(D)\subseteq\mathcal T$. Since $$U^\bot\cap\Fac N=\mathcal W\cap \Fac N\subsetneq \mathcal T,$$ there exists $Y\in\mathcal T$ but $Y\notin U^\bot\cap\Fac N$. Consider the canonical sequence of $Y$ with respect to the torsion pair $(\Fac N, N^\bot)$ in $\mod A$
$$0\rightarrow Y_t\rightarrow Y\rightarrow Y_f\rightarrow 0,$$
with $Y_t\in\Fac N$ and $Y_f\in N^\bot$. Since both $\mathcal T$ and $\Fac M$ are closed under quotient modules and by $Y\in\mathcal T\subseteq\Fac M$, we get $Y_f\in\mathcal T\cap \Fac M$ and thus $Y_f\in N^\bot\cap \Fac M=\Filt(D)$. 
By $Y\notin U^\bot\cap\Fac N$ and
$$Y\in\mathcal T\subseteq\mathcal W\cap\Fac M=U^\bot\cap\Fac M\subseteq U^\bot,$$
 we get $Y\notin \Fac N$. This implies $Y_f\neq 0$. Then by $Y_f\in\Filt(D)$, we know that $D$ is a quotient module of $Y_f$. Since $\mathcal T$ is closed under quotient modules and by $Y_f\in\mathcal T$, we get $D\in\mathcal T$. Since $\mathcal T$ is closed under extensions, we get $\Filt(D)\subseteq\mathcal T$.

Now we show $\mathcal W\cap \Fac M \subseteq\mathcal T$. For any $Z\in \mathcal W\cap\Fac M$, consider the canonical sequence of $Z$ with respect to the torsion pair $(\Fac N,N^\bot)$
$$0\rightarrow Z_t\rightarrow Z\rightarrow Z_f\rightarrow 0,$$
with $Z_t\in\Fac N$ and $Z_f\in N^\bot$. Since $\Fac M$ is closed under quotient modules and by $Z\in\Fac M$, we get $Z_f\in\Fac M$ and thus
$$Z_f\in N^\bot\cap \Fac M=\Filt(D)\subseteq\mathcal T.$$ 
Because $U^\bot$ is closed under submodules and by $Z\in\mathcal W\cap \Fac M=U^\bot\cap\Fac M\subseteq U^\bot$, we get $Z_t\in U^\bot$ and thus $Z_t\in U^\bot\cap \Fac N=\mathcal W\cap\Fac N\subsetneq\mathcal T$. Since $\mathcal T$ is closed under extensions and by $Z_t,Z_f\in\mathcal T$, we get $Z\in\mathcal T$ and thus $\mathcal W\cap\Fac M\subseteq\mathcal T$. So we have  $\mathcal T=\mathcal W\cap\Fac M$ and thus $\mathcal W\cap\Fac M$ is a cover of $\mathcal W\cap\Fac N$ in the poset of torsion classes in $\mathcal W$.

(iii) By (i) and (ii), we have that either $\mathcal W\cap\Fac M$ equals $\mathcal W\cap\Fac N$ or $\mathcal W\cap\Fac M$ is a cover of $\mathcal W\cap\Fac N$ in the poset of torsion classes in $\mathcal W$. Then by the order-preserving bijection in Theorem \ref{thm:Jasso}(iv), we get that either $\Fac U\ast(\mathcal W\cap\Fac M)$ equals $\Fac U\ast(\mathcal W\cap\Fac N)$ or 
$\Fac U\ast(\mathcal W\cap\Fac M)$ is a cover of  $\Fac U\ast(\mathcal W\cap\Fac N)$ in the poset of torsion classes in $\mod A$.

Notice that $B_{(U,Q)}^-(M,P)$ is the basic $\tau$-tilting pair corresponding to the functorially finite torsion class $\Fac U\ast(\mathcal W\cap\Fac M)$ 
in $\mod A$ and $B_{(U,Q)}^-(N,R)$ is the basic $\tau$-tilting pair corresponding to the functorially finite torsion class
$\Fac U\ast(\mathcal W\cap\Fac N)$
in $\mod A$. Then by Lemma \ref{lem:brick}, we get that  either $B_{(U,Q)}^-(N,R)$ equals $B_{(U,Q)}^-(M,P)$ or $B_{(U,Q)}^-(N,R)$ is a left mutation of $B_{(U,Q)}^-(M,P)$.\end{proof}

\begin{corollary}
Let $(M,P)$ be a basic $\tau$-tilting pair in $\mod A$ that can be obtained from the basic $\tau$-tilting pair $(A,0)$ by a sequence of mutations. Suppose that $(U,Q)$ is a common direct summand of $(M,P)$ and  $(A,0)$, then there exists a sequence of mutations going from $(A,0)$ to $(M,P)$ such that the basic $\tau$-rigid pair $(U,Q)$ remains the same during this process.
\end{corollary}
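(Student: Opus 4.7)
The approach is to apply the relative left Bongartz completion $B_{(U,Q)}^-$ term-by-term to the given mutation sequence and then delete the resulting trivial steps. The plan rests on two facts already proved in the paper: Corollary~\ref{cortrivial}, which pins down the endpoints, and Theorem~\ref{mainthm}(iii), which controls how $B_{(U,Q)}^-$ interacts with mutations.

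First I would observe that since $(U,Q)$ is a direct summand of $(A,0)$, we must have $Q=0$ and $U\in\add A$, so $\tau U=0$ and $\prescript{\bot}{}({\tau U})\cap Q^\bot=\mod A$. Hence the hypothesis $\Fac X\subseteq\prescript{\bot}{}({\tau U})\cap Q^\bot$ in Proposition~\ref{pro:finite}(v) is automatic for every basic $\tau$-tilting pair $(X,Y)$, so $B_{(U,Q)}^-(X,Y)$ is defined throughout (this is case (ii) of the remark following Proposition~\ref{pro:finite}). By the very definition of relative left Bongartz completion via Proposition~\ref{prominmax}, each such $B_{(U,Q)}^-(X,Y)$ contains $(U,Q)$ as a direct summand.

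Now write the given chain as $(A,0)=(M_0,P_0),(M_1,P_1),\ldots,(M_n,P_n)=(M,P)$, where each step is a left or right mutation. Applying $B_{(U,Q)}^-$ to each term, Corollary~\ref{cortrivial} yields $B_{(U,Q)}^-(M_0,P_0)=(A,0)$ and $B_{(U,Q)}^-(M_n,P_n)=(M,P)$, so the endpoints are preserved. For the interior, if $(M_{i+1},P_{i+1})$ is a left mutation of $(M_i,P_i)$, Theorem~\ref{mainthm}(iii) applies directly and says $B_{(U,Q)}^-(M_{i+1},P_{i+1})$ is either equal to $B_{(U,Q)}^-(M_i,P_i)$ or is a left mutation of it. If $(M_{i+1},P_{i+1})$ is a right mutation of $(M_i,P_i)$, I would reverse the arrow: then $(M_i,P_i)$ is a left mutation of $(M_{i+1},P_{i+1})$, and Theorem~\ref{mainthm}(iii) now gives that $B_{(U,Q)}^-(M_i,P_i)$ is either equal to or a left mutation of $B_{(U,Q)}^-(M_{i+1},P_{i+1})$, which is precisely the statement that $B_{(U,Q)}^-(M_{i+1},P_{i+1})$ is either equal to or a right mutation of $B_{(U,Q)}^-(M_i,P_i)$.

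To conclude, I would simply delete the trivial steps (where consecutive completions coincide) from the image sequence $B_{(U,Q)}^-(M_0,P_0),\ldots,B_{(U,Q)}^-(M_n,P_n)$. What remains is a genuine sequence of mutations from $(A,0)$ to $(M,P)$ along every term of which $(U,Q)$ remains a direct summand. The main obstacle — really the only place one must think — is the right-mutation case, where Theorem~\ref{mainthm}(iii) (which is phrased for left mutations) has to be invoked on the reversed arrow; once that bookkeeping is handled, the argument is a direct assembly of the results already established.
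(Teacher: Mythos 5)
Your proposal is correct and follows essentially the same route as the paper: reduce to the case $Q=0$, $U\in\add A$ so that $\prescript{\bot}{}({\tau U})\cap Q^\bot=\mod A$ and $B_{(U,Q)}^-$ is everywhere defined, apply Corollary \ref{cortrivial} at the endpoints and Theorem \ref{mainthm}(iii) at each step (orienting each mutation as a left mutation, exactly as the paper does), then delete the identity steps. No gaps; this matches the paper's argument.
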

\begin{proof}
This result essentially dues to \cite{cao_2021}*{Theorem 5.12}, where it is proved in a different context.

Since $(U,Q)$ is a common direct summand of $(M,P)$ and  $(A,0)$, we know that $U$ is a basic projective module and $Q=0$. Thus $\prescript{\bot}{}({\tau U})\cap Q^\bot=\mod A$.
Since $(M,P)$ can be obtained from $(A,0)$ by a sequence of mutations, we can form the following diagram
$$\xymatrix{(M_0,P_0)&(M_1,P_1)\ar[l]_{\mu_{k_1}}&\ldots\ar[l]_{\;\;\;\;\;\mu_{k_2}}&(M_{m-1},P_{m-1})\ar[l]&(M_m,P_m)\ar[l]_{\;\;\;\;\;\;\;\;\mu_{k_m}}}$$
where $(M_0,P_0)=(M,P),\;(M_m,P_m)=(A,0)$ and each $\mu_{k_i}$ is a (left or right) mutation of $\tau$-tilting pairs.
Notice that if the mutation from $(M_{i},P_{i})$ to $(M_{i-1},P_{i-1})$ is a right mutation, then the mutation from $(M_{i-1},P_{i-1})$ to  $(M_{i},P_{i})$ is a left mutation. Since $\prescript{\bot}{}({\tau U})\cap Q^\bot=\mod A$, we know that $\Fac M_i\subseteq \prescript{\bot}{}({\tau U})\cap Q^\bot$ holds for any $i=0,1,\ldots,m$. Thus Theorem \ref{mainthm} (iii) can be applied to each mutation $\xymatrix{(M_{i-1},P_{i-1})\ar@{-}[r]^{\;\;\;\mu_{k_i}}&(M_i,P_i)}$ by choosing an orientation such that $\mu_{k_i}$ is a left mutation. So we have the following diagram
$$\xymatrix{(M_0,P_0)\ar[d]^{B_{(U,Q)}^-}&(M_1,P_1)\ar[d]^{B_{(U,Q)}^-}\ar@{-}[l]&\ldots\ar[d]^{B_{(U,Q)}^-}\ar@{-}[l]&(M_{m-1},P_{m-1})\ar[d]^{B_{(U,Q)}^-}\ar@{-}[l]&(M_m,P_m)\ar[d]^{B_{(U,Q)}^-}\ar@{-}[l]\\
(M_0^-,P_0^-)&(M_1^-,P_1^-)\ar@{-}[l]_{\alpha_1}&\ldots\ar@{-}[l]_{\;\;\;\;\;\;\alpha_2}&(M_{m-1}^-,P_{m-1}^-)\ar@{-}[l]_{}&(M_m^-,P_m^-)\ar@{-}[l]_{\;\;\;\;\;\;\;\alpha_m}}$$
where $(M_i^-,P_i^-)=B_{(U,Q)}^-(M_i,P_i)$ and each $\alpha_i$ is either an identity or a mutation of $\tau$-tilting pairs.

Since $(U,Q)$ is a common direct summand of $(M,P)=(M_0,P_0)$ and $(A,0)=(M_m,P_m)$ and by Corollary \ref{cortrivial}, we have $(M_0^-,P_0^-)=(M_0,P_0)=(M,P)$ and $(M_m^-,P_m^-)=(M_m,P_m)=(A,0)$. By $(M_i^-,P_i^-)=B_{(U,Q)}^-(M_i,P_i)$, we know that  each $(M_i^-,P_i^-)$ contains $(U,Q)$ as a direct summand. By deleting those $\alpha_i$ which are identities from
$$\xymatrix{
(M_0^-,P_0^-)&(M_1^-,P_1^-)\ar@{-}[l]_{\alpha_1}&\ldots\ar@{-}[l]_{\;\;\;\;\;\;\alpha_2}&(M_{m-1}^-,P_{m-1}^-)\ar@{-}[l]_{}&(M_m^-,P_m^-)\ar@{-}[l]_{\;\;\;\;\;\;\;\alpha_m}},$$ we obtain a sequence of mutations going from $(A,0)=(M_m^-,P_m^-)$ to $(M,P)=(M_0^-,P_0^-)$ such that the basic $\tau$-rigid pair $(U,Q)$ remains the same during this process.
\end{proof}

\begin{remark}
(i) In some sense, the corollary above is a representation-theoretic counterpart of Fomin-Zelevinsky's conjecture \cite{Fomin-Zelevinskt_04} on exchange graphs of cluster algebras stating that in a cluster algebra, the seeds whose clusters contain particular cluster variables form a connected subgraph of the exchange graph of this cluster algebra. This conjecture has been proved in \cite{cao-li-2020}.

(ii) The compatibility between completions and mutations is also very useful to study the non-leaving-face property of exchange graphs, cf., \cite{Fu-Geng-Liu_2022} for details. In the next subsection, we will give another application for such compatibility. 
\end{remark}

\subsection{Application to maximal green sequences}
We first recall the notion of maximal green sequences for torsion classes in $\mod A$. The existence of such sequences for finite dimensional Jacobian algebras \cite{DWZ_2008} has  important consequences in cluster algebras \cite{fz_2002}, cf., \cite{keller_demonet_2020}. In this subsection, we study the existence of maximal green sequences under $\tau$-tilting reduction for any finite dimensional algebra.

\begin{definition}[\cite{bst_2019}*{Definition 4.8}]
Let $\mathcal T$ be a torsion class in $\mod A$. A {\em maximal green sequence} for $\mathcal T$ is a finite sequence of torsion classes $$0=\mathcal T_0\subsetneq\mathcal T_1\subsetneq\ldots\subsetneq \mathcal T_{m-1}\subsetneq\mathcal T_m=\mathcal T$$
such that $\mathcal T_{i+1}$ is a cover of $\mathcal T_i$ for any $i=0,1,\ldots,m-1$.
\end{definition}
If $\mathcal T=\mod A$ has a maximal green sequence, we simply say that the algebra $A$ has a maximal green sequence.

\begin{proposition}[\cite{bst_2019}*{Proposition 4.9}\label{pro:bst4.9}]
Let  $\mathcal T$ be a torsion class in $\mod A$ and $$0=\mathcal T_0\subsetneq\mathcal T_1\subsetneq\ldots\subsetneq \mathcal T_{m-1}\subsetneq\mathcal T_m=\mathcal T$$ a maximal green sequence for $\mathcal T$ in $\mod A$. Then there exists a finite sequence of left mutations of basic $\tau$-tilting pairs
$$\xymatrix{(0,A)=(M_0,P_0)&(M_1,P_1)\ar[l]&\ldots\ar[l]&(M_{m-1},P_{m-1})\ar[l]&(M_m,P_m)\ar[l]}$$
such that $\Fac M_i=\mathcal T_i$ for $i=0,1,\ldots,m$.
\end{proposition}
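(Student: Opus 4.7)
The plan is to proceed by downward induction on $i$, starting at $i=m$ and at each step realizing the cover $\mathcal T_{i-1}\subsetneq \mathcal T_i$ as a left mutation of basic $\tau$-tilting pairs. Implicit in the hypothesis is that $\mathcal T=\mathcal T_m$ is functorially finite, otherwise no $\tau$-tilting pair could satisfy $\Fac M_m=\mathcal T_m$; so I would first set $(M_m,P_m):=\Psi^{-1}(\mathcal T)$ via the bijection of Theorem \ref{proorder}(ii).

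For the inductive step, suppose $(M_i,P_i)$ has been constructed with $\Fac M_i=\mathcal T_i$. The cover $\mathcal T_{i-1}\subsetneq \mathcal T_i$, taken in the lattice of all torsion classes, singles out a unique brick $D_i$ (up to isomorphism) in the sense of the brick-labeling theory used in the proof of Lemma \ref{lem:brick}, the class $\Filt(D_i)$ accounting for the ``difference'' between $\mathcal T_{i-1}$ and $\mathcal T_i$. I would then invoke the general correspondence between covers of $\Fac M_i$ in the lattice of all torsion classes and those indecomposable summands of $(M_i,P_i)$ admitting a left mutation; under this correspondence, the cover labeled by $D_i$ is realized by mutating the unique summand of $(M_i,P_i)$ whose brick label equals $D_i$, producing a basic $\tau$-tilting pair $(M_{i-1},P_{i-1})$ with $\Fac M_{i-1}=\mathcal T_{i-1}$. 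The equivalence (b)$\Leftrightarrow$(a) of Lemma \ref{lem:brick} then confirms that $(M_{i-1},P_{i-1})$ is indeed a left mutation of $(M_i,P_i)$.

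Iterating $m$ times yields the desired chain $(M_m,P_m)\to (M_{m-1},P_{m-1})\to\cdots\to (M_0,P_0)$ of left mutations. At the bottom, $\mathcal T_0=0$ forces $(M_0,P_0)=(0,A)$ by Theorem \ref{proorder}(ii), matching the initial pair in the statement.

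The main obstacle is the correspondence used above. Lemma \ref{lem:brick} only handles covers between two torsion classes \emph{both} known to come from $\tau$-tilting pairs, whereas here $\mathcal T_{i-1}$ is a priori only a torsion class. One must therefore show that $\mathcal T_{i-1}$ is automatically functorially finite whenever it is covered by the functorially finite class $\mathcal T_i$, and that the cover is then realized by a left mutation of $(M_i,P_i)$. This upgrade of Lemma \ref{lem:brick}(b)$\Leftrightarrow$(c) is precisely the content of the brick-labeling machinery of \cite{DIRRT} (together with \cite{asai_2018} and \cite{BCZ_2019}); once cited, the induction proceeds routinely.
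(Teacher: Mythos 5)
The paper itself gives no proof of this proposition --- it is imported directly from \cite{bst_2019} --- so your argument has to stand on its own. Its overall shape is the right one: realize each cover $\mathcal T_{i-1}\subsetneq\mathcal T_i$ via the brick-labelling machinery of \cite{DIRRT} (equivalently \cite{DIJ_2017}), which says that a torsion class adjacent, in the lattice of \emph{all} torsion classes, to a functorially finite torsion class is itself functorially finite and that the cover is then an arrow in the Hasse quiver of functorially finite torsion classes; then Lemma \ref{lem:brick} (b)$\Rightarrow$(a) converts each such cover into a left mutation of the corresponding basic $\tau$-tilting pairs. Your final paragraph correctly isolates this as the only non-formal ingredient for the inductive step.

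The gap is at the top of your induction. You assert that functorial finiteness of $\mathcal T_m=\mathcal T$ is ``implicit in the hypothesis, otherwise no $\tau$-tilting pair could satisfy $\Fac M_m=\mathcal T_m$'', and then set $(M_m,P_m):=\Psi^{-1}(\mathcal T)$. That is reasoning from the conclusion: the hypothesis only says that $\mathcal T$ is a torsion class admitting a maximal green sequence, and the existence of a $\tau$-tilting pair with $\Fac M_m=\mathcal T_m$ is precisely part of what the proposition asserts, so you may not assume it to get started. The cure is available from the very result you already cite: since $\mathcal T_0=0=\Fac 0$ is functorially finite and corresponds to $(0,A)$ under Theorem \ref{proorder}, the adjacency statement of \cite{DIRRT}/\cite{DIJ_2017} applied upward along the chain shows inductively that every $\mathcal T_i$ --- in particular $\mathcal T_m$ --- is functorially finite, with each consecutive pair related by a mutation which is a left mutation in the direction $\mathcal T_i\to\mathcal T_{i-1}$ because $\Fac M_{i-1}\subsetneq\Fac M_i$. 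So either run your induction from the bottom starting at $(M_0,P_0)=(0,A)$, or insert this preliminary upward induction before invoking $\Psi^{-1}(\mathcal T)$; with that repair the argument is complete.
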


\begin{theorem}\label{mainthm2}
Let $(U,Q)$ be a basic $\tau$-rigid pair in $\mod A$ and $(M,P)$ the absolute right Bongartz completion of $(U,Q)$, i.e., $\Fac M=\prescript{\bot}{}({\tau U})\cap Q^\bot$. Denote by $B=\End_A(M)$ and $A_{(U,Q)}=B/Be_UB$ the $\tau$-tilting reduction of $A$ at $(U,Q)$. If $\Fac M=\prescript{\bot}{}({\tau U})\cap Q^\bot$ has a maximal green sequence in $\mod A$, then the algebra $A_{(U,Q)}$ has a maximal green sequence.
\end{theorem}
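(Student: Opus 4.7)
The plan is to pull back a maximal green sequence for $\Fac M=\prescript{\bot}{}({\tau U})\cap Q^\bot$ to a chain of left mutations of basic $\tau$-tilting pairs ending at $(M,P)$, push this chain through the relative left Bongartz completion $B_{(U,Q)}^-$, and then transport the resulting chain to $\mod A_{(U,Q)}$ via $\tau$-tilting reduction. The main ingredients are Proposition \ref{pro:bst4.9}, the compatibility Theorem \ref{mainthm}(iii), Corollary \ref{cortrivial}, and the order-preserving bijections of Theorem \ref{thm:Jasso}.

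First, I would apply Proposition \ref{pro:bst4.9} to a maximal green sequence $0=\mathcal T_0\subsetneq\cdots\subsetneq\mathcal T_m=\Fac M$, obtaining a chain of left mutations
$$(0,A)=(M_0,P_0)\leftarrow(M_1,P_1)\leftarrow\cdots\leftarrow(M_m,P_m)$$
with $\Fac M_i=\mathcal T_i$. The uniqueness in Theorem \ref{proorder}(ii) forces $(M_m,P_m)=(M,P)$, and the inclusion $\Fac M_i\subseteq\prescript{\bot}{}({\tau U})\cap Q^\bot$ holds automatically for every $i$, so by Proposition \ref{pro:finite}(v) each $B_{(U,Q)}^-(M_i,P_i)$ is well defined.

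Next, I would apply Theorem \ref{mainthm}(iii) to each step, obtaining a sequence
$$B_{(U,Q)}^-(M_0,P_0)\leftarrow B_{(U,Q)}^-(M_1,P_1)\leftarrow\cdots\leftarrow B_{(U,Q)}^-(M_m,P_m)$$
whose arrows are each either identities or left mutations. Corollary \ref{cortrivial} identifies the right endpoint as $B_{(U,Q)}^-(M,P)=(M,P)$, while at the other end $B_{(U,Q)}^-(0,A)$ corresponds by construction to the torsion class $\Fac U\ast(\mathcal W\cap 0)=\Fac U$, i.e., to the absolute left Bongartz completion of $(U,Q)$. Deleting identity arrows leaves a strict chain of left mutations of basic $\tau$-tilting pairs, all containing $(U,Q)$ as a direct summand, which by Lemma \ref{lem:brick} translates into a chain of covers
$$\Fac U\subsetneq\Fac N_1\subsetneq\cdots\subsetneq\Fac N_{k-1}\subsetneq\prescript{\bot}{}({\tau U})\cap Q^\bot$$
inside $\ftors_{(U,Q)}(A)$.

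Finally, the composed order-preserving bijection $\ftors_{(U,Q)}(A)\cong\ftors(\mathcal W)\cong\ftors(A_{(U,Q)})$ from Theorem \ref{thm:Jasso}(iii)(iv) sends $\Fac U$ to $0$ and $\prescript{\bot}{}({\tau U})\cap Q^\bot$ to $\mod A_{(U,Q)}$; as an order-isomorphism of posets it preserves cover relations. Transporting the chain above therefore yields
$$0\subsetneq\mathcal T_1'\subsetneq\cdots\subsetneq\mathcal T_{k-1}'\subsetneq\mod A_{(U,Q)}$$
of covers in $\ftors(A_{(U,Q)})$, i.e., a maximal green sequence for $A_{(U,Q)}$. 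The only real obstacle is bookkeeping: verifying that after deletion of identity arrows the surviving covers in $\ftors_{(U,Q)}(A)$ correspond to covers in $\ftors(A_{(U,Q)})$ and that the two extremal torsion classes land on $0$ and $\mod A_{(U,Q)}$ respectively, but both points are immediate from the order-isomorphism induced by the abelian equivalence $\mathcal W\simeq\mod A_{(U,Q)}$ of Theorem \ref{thm:Jasso}(ii).
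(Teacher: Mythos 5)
Your proposal is correct and follows essentially the same route as the paper's proof: lift the maximal green sequence to a chain of left mutations via Proposition \ref{pro:bst4.9}, apply $B_{(U,Q)}^-$ and Theorem \ref{mainthm}(iii) to get a chain of identities/left mutations between $\Fac U$ and $\prescript{\bot}{}({\tau U})\cap Q^\bot$, delete identities, and transport the resulting chain of covers through the bijections of Theorem \ref{thm:Jasso}(iv) and (iii). The only (immaterial) differences are cosmetic, e.g.\ identifying the right endpoint via Corollary \ref{cortrivial} rather than by computing $\Fac U\ast\Fac M$ directly.
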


\begin{proof}
Let $0=\mathcal T_0\subsetneq\mathcal T_1\subsetneq\ldots\subsetneq \mathcal T_{m-1}\subsetneq\mathcal T_m=\Fac M$ be a maximal green sequence for $\Fac M$ in $\mod A$.
By Proposition \ref{pro:bst4.9}, there exists a finite sequence of left mutations of basic $\tau$-tilting pairs
$$\xymatrix{(M_0,P_0)&(M_1,P_1)\ar[l]&\ldots\ar[l]&(M_{m-1},P_{m-1})\ar[l]&(M_m,P_m)\ar[l]}$$
such that $\Fac M_i=\mathcal T_i$ for $i=0,1,\ldots,m$. Notice that we have
$$(M_0,P_0)=(0,A)\;\;\;\text{and}\;\;\;(M_m,P_m)=(M,P).$$
By $\Fac M_i=\mathcal T_i\subseteq\mathcal T_m=\Fac M=\prescript{\bot}{}({\tau U})\cap Q^\bot$ and Proposition \ref{pro:finite} (v),  the left Bongartz completion $B_{(U,Q)}^-(M_i,P_i)$ of $(U,Q)$ with respect to $(M_i,P_i)$ can be defined, where $i=0,1,\ldots,m$. Thus we can form the following diagram
$$\xymatrix{(M_0,P_0)\ar[d]^{B_{(U,Q)}^-}&(M_1,P_1)\ar[d]^{B_{(U,Q)}^-}\ar[l]&\ldots\ar[d]^{B_{(U,Q)}^-}\ar[l]&(M_{m-1},P_{m-1})\ar[d]^{B_{(U,Q)}^-}\ar[l]&(M_m,P_m)\ar[d]^{B_{(U,Q)}^-}\ar[l]\\
(M_0^-,P_0^-)&(M_1^-,P_1^-)\ar[l]_{\alpha_1}&\ldots\ar[l]_{\;\;\;\;\;\;\alpha_2}&(M_{m-1}^-,P_{m-1}^-)\ar[l]_{}&(M_m^-,P_m^-)\ar[l]_{\;\;\;\;\;\;\;\alpha_m}}$$
where $(M_i^-,P_i^-)=B_{(U,Q)}^-(M_i,P_i)$ for $i=0,1,\ldots,m$. By Proposition \ref{pro:finite} (v), we have
\begin{eqnarray}
\Fac M_0^-&=&\Fac U\ast  \Fac M_0=\Fac U,\nonumber\\
\Fac M_m^-&=&\Fac U\ast \Fac M_m=\prescript{\bot}{}({\tau U})\cap Q^\bot.\nonumber
\end{eqnarray}
By Theorem \ref{mainthm}, we know that each $\alpha_i$ in the above diagram is either an identity or a left mutation. So $\xymatrix{
(M_0^-,P_0^-)&(M_1^-,P_1^-)\ar[l]_{\alpha_1}&\ldots\ar[l]_{\;\;\;\;\;\;\alpha_2}&(M_{m-1}^-,P_{m-1}^-)\ar[l]_{}&(M_m^-,P_m^-)\ar[l]_{\;\;\;\;\;\;\;\alpha_m}}$ induces a chain of torsion classes
$$\Fac U=\Fac M_0^-\subseteq\Fac M_1^-\subseteq\ldots\subseteq\Fac M_{m-1}^-\subseteq\Fac M_m^-=\prescript{\bot}{}({\tau U})\cap Q^\bot$$
in $\ftors_{(U,Q)}(A)$. After removing the redundant torsion classes in the above chain, we get a new chain
$$\Fac U=\Fac M_0^-\subsetneq\Fac M_{k_1}^-\subsetneq\ldots\subsetneq\Fac M_{k_{\ell}}^-\subsetneq\Fac M_m^-=\prescript{\bot}{}({\tau U})\cap Q^\bot$$
in $\ftors_{(U,Q)}(A)$ such that each inclusion above is a cover of torsion classes.
This new chain will give us a maximal green sequence of the algebra $A_{(U,Q)}$, by the order-preserving bijections in Theorem \ref{thm:Jasso} (iv) and (iii).
\end{proof}

\begin{remark}
In the above theorem, if we take $(U,Q)$ a basic $\tau$-rigid pair in $\add (A,0)$, then $\prescript{\bot}{}({\tau U})\cap Q^\bot=\mod A,\; (M,P)=(A,0),\; B=\End_A(M)=A$ and $A_{(U,Q)}=A/Ae_UA$. In this case, the above theorem is just the  representation-theoretic version
 of  Muller’s theorem \cite{Muller16}
stating that full subquivers of a quiver inherit maximal green sequences.
\end{remark}

Let us give a concrete example for the case $(U,Q)\in\add (A,0)$. 
\begin{example}
Let $A$ be the $K$-algebra given by the following quiver
$$\xymatrix{&1\ar[rd]^{a_3}&\\
3\ar[ru]^{a_2}&&2\ar[ll]^{a_1}}$$
with relations: $a_1a_2=0, \;a_2a_3=0,\; a_3a_1=0$. We denote by $S_i\in\mod A$ the simple module and $P_i\in\mod A$ the indecomposable projective module corresponding to the vertex $i$. Consider the following sequence of left mutations of $\tau$-tilting pairs in $\mod A$
$$\xymatrix{(0,A)&(S_3,P_2\oplus P_1)\ar[l]&(S_3\oplus P_3,P_2)\ar[l]&(S_3\oplus P_2\oplus P_3,0)\ar[l]&(A,0)\ar[l]}.$$
This sequence of left mutations gives a maximal green sequence of the algebra $A$
$$0\subsetneq \Fac S_3 \subsetneq \Fac (S_3\oplus P_3)\subsetneq\Fac (S_3\oplus P_2\oplus P_3)\subsetneq\Fac A=\mod A.$$

Keep the notations in Theorem \ref{mainthm2}.
We take $(U,Q)=(P_1,0)$. Then  $\prescript{\bot}{}({\tau U})\cap Q^\bot=\mod A,\; (M,P)=(A,0),\; B=\End_A(M)=A$. In this case, $A_{(U,Q)}=A/Ae_UA$ is just the path algebra of the quiver  $$\xymatrix{3^\prime&2^\prime\ar[l]}.$$ 
Applying $B_{(U,Q)}^-=B_{(P_1,0)}^-$ to the above sequence of left mutations of $\tau$-tilting pairs, we obtain the following chain of $\tau$-tilting pairs containing $(P_1,0)$ as a direct summand
$$\xymatrix{(P_1\oplus S_1, P_3)&(P_1\oplus S_1\oplus P_3,0)\ar[l]&(P_1\oplus S_1\oplus P_3,0)\ar[l]&(A,0)\ar[l]&(A,0)\ar[l]}.$$
Then it induces a chain of torsion classes in $\ftors_{(P_1,0)}(A)$
$$\Fac (P_1\oplus S_1)\subsetneq\Fac(P_1\oplus S_1\oplus P_3) \subsetneq \mod A.$$
Using the order-preserving bijections in Theorem \ref{thm:Jasso} (iv) and (iii), we get a new chain
$$0\subsetneq \Fac S_{3^\prime}\subsetneq \mod A_{(P_1,0)}$$
of torsion classes in $\mod A_{(P_1,0)}$, where $S_{3^\prime}\in\mod A_{(P_1,0)}$ is the simple module corresponding to the vertex $3^\prime$. It is easy to see that $0\subsetneq \Fac S_{3^\prime}\subsetneq \mod A_{(P_1,0)}$ is a maximal green sequence of the algebra $A_{(U,Q)}=A_{(P_1,0)}$.
\end{example}

\section{Relative left Bongartz completions in silting theory}\label{sec:4}
In this section, we fix a $K$-linear, Krull-Schmidt, Hom-finite triangulated category $\mathcal D$ with a silting object.

\subsection{Relative left Bongartz completions in silting theory}

We associate a basic object $B_U^-(T)$ to any pair $(U,T)$ of basic objects in $\mathcal D$ as follows:
take a triangle 
$$\xymatrix{T[-1]\ar[r]^f&U^\prime\ar[r]&X_U\ar[r]&T}$$ 
with a minimal left $\add U$-approximation $f:T[-1]\rightarrow U^\prime$. Denote by $B_U^-(T):=(U\oplus X_U)^\flat$ the basic object corresponding to $U\oplus X_U$.

\begin{proposition}\label{pro:common}
Let $M=\bigoplus_{i=1}^n M_i$ be a basic silting object in $\mathcal D$ and $N$ a left mutation of $M$. Let $U$ be a basic object in $\mathcal D$. Then $B_U^-(M)$ and $B_U^-(N)$ have at least $n-1$ common indecomposable direct summands. 
\end{proposition}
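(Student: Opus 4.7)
The plan is to exploit the fact that $N$ differs from $M$ in exactly one indecomposable direct summand, together with the observation that minimal left $\add U$-approximations decompose along direct sum decompositions.

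First, since $N$ is a left mutation of $M$, I write $M = M_k \oplus M'$ with $M' = \bigoplus_{i \neq k} M_i$, and $N = X \oplus M'$ where $X$ is the new indecomposable summand replacing $M_k$. So $M'$ is a common direct summand of $M$ and $N$ with $n-1$ indecomposable summands.

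Next, since the minimal left $\add U$-approximation of a direct sum is the direct sum of minimal left $\add U$-approximations of the summands, the triangle defining $B_U^-(M)$ splits as the direct sum of
\begin{align*}
M_k[-1] &\to V_k \to X_{U,k} \to M_k, \\
M'[-1] &\to V' \to X_{U,M'} \to M',
\end{align*}
and likewise the triangle defining $B_U^-(N)$ splits as the direct sum of $X[-1] \to W \to Y_X \to X$ and the \emph{same} triangle $M'[-1] \to V' \to X_{U,M'} \to M'$. Hence
$$B_U^-(M) = (U \oplus X_{U,k} \oplus X_{U,M'})^{\flat} \quad \text{and} \quad B_U^-(N) = (U \oplus Y_X \oplus X_{U,M'})^{\flat},$$
so both contain the basic object corresponding to $U \oplus X_{U,M'}$ as a common direct summand.

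It remains to show that $(U \oplus X_{U,M'})^{\flat}$ has at least $n-1$ pairwise non-isomorphic indecomposable direct summands; this is the main obstacle. The plan is to work in the Grothendieck group $\K_0(\mathcal D)$. Let $H \subseteq \K_0(\mathcal D)$ be the subgroup generated by the classes of the indecomposable summands of $(U \oplus X_{U,M'})^{\flat}$. Observe that the triangle $M'[-1] \to V' \to X_{U,M'} \to M'$ itself splits along $M' = \bigoplus_{i \neq k} M_i$ into triangles $M_i[-1] \to V_i \to X_{U,i} \to M_i$ with $V_i \in \add U$, yielding $X_{U,M'} = \bigoplus_{i \neq k} X_{U,i}$ and $[M_i] = [X_{U,i}] - [V_i] \in H$ for each $i \neq k$ (since $H$ contains the classes of all indecomposable summands of $U$ and of $X_{U,M'}$). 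By Theorem \ref{thm:k0}, $\{[M_1], \ldots, [M_n]\}$ is a $\mathbb Z$-basis of $\K_0(\mathcal D)$, so the $n-1$ classes $\{[M_i] : i \neq k\}$ are $\mathbb Z$-linearly independent in $H$, whence $H$ has rank at least $n-1$. Since the rank of a finitely generated abelian group is bounded above by the number of its generators, $(U \oplus X_{U,M'})^{\flat}$ has at least $n-1$ indecomposable direct summands, as required.
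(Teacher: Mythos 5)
Your proof is correct and follows essentially the same route as the paper: identify the common summand $M'=\bigoplus_{i\neq k}M_i$, note that the approximation triangles split along this decomposition so that $(U\oplus X_{U,M'})^{\flat}$ is a common direct summand of $B_U^-(M)$ and $B_U^-(N)$, and then use the relations $[M_i]=[X_{U,i}]-[V_i]$ in $\K_0(\mathcal D)$ together with Theorem \ref{thm:k0} to bound the number of indecomposable summands below by $n-1$. The only difference is cosmetic: you make explicit the additivity of minimal left $\add U$-approximations, which the paper invokes implicitly, and you phrase the counting via the rank of a subgroup rather than $\mathbb Q$-linear spanning.
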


\begin{proof}

Say $N$ is the left mutation of $M$ at $M_k$. Then by definition, there exists an indecomposable object $M_k^\prime$ such that $N=M_k^\prime\bigoplus(\bigoplus_{j\neq k}M_j)$. So  $\bigoplus_{j\neq k}M_j$ is a common direct summand of $M$ and $N$.

For each $j\neq k$, we take a triangle $$\xymatrix{M_j[-1]\ar[r]^{f_j}&U_j^\prime\ar[r]&X_j\ar[r]&M_j}$$ 
with a minimal left $\add U$-approximation $f_j:M_j[-1]\rightarrow U_j^\prime$. By definitions of $B_U^-(M)$ and $B_U^-(N)$, we know that
$L:=(U\bigoplus (\bigoplus_{j\neq k} X_j))^\flat$ is a common direct summand of $B_U^-(M)$ and $B_U^-(N)$.

Now we show that the basic object $L$ has at least $n-1$ indecomposable direct summands. Let  $$L=(U\bigoplus (\bigoplus_{j\neq k} X_j))^\flat=\bigoplus_{i=1}^sL_i$$ be a direct sum decomposition such that each $L_i$ is indecomposable. By the triangles above, we have the following equality in the Grothendieck group $\K_0(\mathcal D)$ of $\mathcal D$
$$[M_j]=[X_j]-[U_j^\prime]$$
for any $j\neq k$. Notice that $X_j, U_j^\prime$ belong to $\add L$. Hence, we get that $[X_j], [U_j^\prime]$ are in the $\mathbb Z$-span of $\{[L_1],\ldots,[L_s]\}$. So $[M_j]=[X_j]-[U_j^\prime]$ is in the $\mathbb Z$-span of $\{[L_1],\ldots,[L_s]\}$ for any $j\neq k$.

Since $M$ is a basic silting object in $\mathcal D$ and by Theorem \ref{thm:k0}, we know that  $$[M_1],\ldots,[M_{k-1}],[M_{k+1}],\ldots,[M_n]$$ are linearly independent over the field $\mathbb Q$. Since they can be spanned by $[L_1],\ldots,[L_s]$ over $\mathbb Q$, we must have $s\geq n-1$. Thus $L$ has at least $n-1$ indecomposable direct summands. This implies that $B_U^-(M)$ and $B_U^-(N)$ have at least $n-1$ common indecomposable direct summands.
\end{proof}

\begin{definition}[Relative left Bongartz completion]\label{def:insilting}
Let $S$ be a basic silting object in $\mathcal D$ and $U$  a basic presilting object in $\mathcal C=S\ast S[1]$. Let $T$ be a basic silting object in $\mathcal C$ such that $B_U^-(T)$ is still a basic silting object in $\mathcal C$. We call the basic silting object $B_U^-(T)$ the {\em left Bongartz completion} of $U$ with respect to $T$.
\end{definition}

In the following proposition, we will give a sufficient condition such that  $B_U^-(T)$ is a basic silting object in $\mathcal C=S\ast S[1]$.

\begin{proposition}\label{pro-def}
Let $S$ be a basic silting object in $\mathcal D$ and denote by $\mathcal C=S\ast S[1]$. Let $U$ be a basic presilting object in $\mathcal C$ and $T$ a basic silting object in $\mathcal C$.  Suppose that $\Hom_{\mathcal D}(U,T[i])=0$ holds for any positive integer $i$ (thanks to Proposition \ref{pro:rigid} below, this is equivalent to requiring $\Hom_{\mathcal D}(U,T[1])=0$). Then $B_U^-(T)$ is a basic silting object in $\mathcal C$.
\end{proposition}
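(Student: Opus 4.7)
The plan is to verify directly the three defining properties of a basic silting object of $\mathcal C$: (a) $B_U^-(T)$ lies in $\mathcal C$, (b) it is presilting in $\mathcal D$, and (c) its thick closure is all of $\mathcal D$. Since $B_U^-(T) = (U \oplus X_U)^\flat$ where $X_U$ comes from the triangle $T[-1] \xrightarrow{f} U' \to X_U \to T$ with $f$ a minimal left $\add U$-approximation, (a) reduces to $X_U \in \mathcal C$ and (b) reduces to the four Hom-vanishings involving $U$ and $X_U$.

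Property (c) I would dispatch first: the defining triangle exhibits $T$ as an extension of $U' \in \add U$ and $X_U$, whence $T \in \thick(U \oplus X_U)$; since $T$ is silting in $\mathcal D$ we have $\thick(T) = \mathcal D$, so $\thick(U \oplus X_U) = \mathcal D$.

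For (b), the vanishing $\Hom_{\mathcal D}(U, U[i])=0$ is the presilting hypothesis on $U$; the vanishing $\Hom_{\mathcal D}(U, X_U[i])=0$ for $i\geq 1$ falls out of applying $\Hom_{\mathcal D}(U,-)$ to the rotated triangle $U' \to X_U \to T \to U'[1]$, since the flanking groups $\Hom_{\mathcal D}(U, U'[i])$ and $\Hom_{\mathcal D}(U, T[i])$ both vanish (the first by presiltingness of $U$ and $U' \in \add U$, the second by hypothesis). The key step is $\Hom_{\mathcal D}(X_U, U[1])=0$, which I would extract from the exact segment
\[
\Hom_{\mathcal D}(U', U) \to \Hom_{\mathcal D}(T[-1], U) \to \Hom_{\mathcal D}(X_U, U[1]) \to \Hom_{\mathcal D}(U', U[1])
\]
obtained by applying $\Hom_{\mathcal D}(-,U)$ to the defining triangle: the right-hand group vanishes because $U$ is presilting, and the left-hand map is surjective exactly because $f$ is a left $\add U$-approximation of $T[-1]$, so exactness forces the middle term to vanish. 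The last piece $\Hom_{\mathcal D}(X_U, X_U[1])=0$ then follows by applying $\Hom_{\mathcal D}(X_U,-)$ to $U' \to X_U \to T \to U'[1]$ and feeding in $\Hom_{\mathcal D}(X_U, U'[1])=0$ together with $\Hom_{\mathcal D}(X_U, T[1])=0$; this last vanishing itself follows by applying $\Hom_{\mathcal D}(-,T[1])$ to the defining triangle and combining $\Hom_{\mathcal D}(T, T[1])=0$ with the hypothesis $\Hom_{\mathcal D}(U, T[1])=0$. The positive-degree vanishings are upgraded to all $i\geq 1$ using Proposition \ref{pro:rigid}.

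The main obstacle I expect is (a): verifying that $X_U$ sits in $\mathcal C = S \ast S[1]$. The plan is to write down $S \ast S[1]$-presentations $S_T^0 \to T \to S_T^1[1] \to S_T^0[1]$ and $S^0 \to U' \to S^1[1] \to S^0[1]$, and to combine them with the defining triangle of $X_U$ via the octahedral axiom. The silting vanishing $\Hom_{\mathcal D}(S, S[j])=0$ for $j\geq 1$ is what should make the intermediate extension groups collapse, producing a triangle $\widetilde S^0 \to X_U \to \widetilde S^1[1] \to \widetilde S^0[1]$ with $\widetilde S^0, \widetilde S^1 \in \add S$ and hence $X_U \in S \ast S[1]$. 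Once (a)--(c) are secured, $U \oplus X_U$ is silting in $\mathcal D$ and lies in $\mathcal C$, so its basic representative $B_U^-(T)$ is a basic silting object of $\mathcal C$.
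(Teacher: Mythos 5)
Your proposal is correct and, for the core steps, is essentially the paper's own argument: the vanishing of $\Hom_{\mathcal D}(X_U,U[1])$ via surjectivity of $\Hom_{\mathcal D}(U',U)\to\Hom_{\mathcal D}(T[-1],U)$ coming from the left $\add U$-approximation, the vanishing of $\Hom_{\mathcal D}(X_U,T[1])$ from $\Hom_{\mathcal D}(T,T[1])=0$ and the hypothesis $\Hom_{\mathcal D}(U,T[1])=0$, the deduction of $\Hom_{\mathcal D}(U\oplus X_U,X_U[1])=0$ from the triangle, the upgrade from rigid to presilting via Proposition \ref{pro:rigid}, and the generation statement $\thick(U\oplus X_U)\supseteq\thick(T)=\mathcal D$ are exactly the steps in the paper's proof. (Your phrase that the triangle ``exhibits $T$ as an extension of $U'$ and $X_U$'' is loose --- $T$ is the cone, and it is $X_U$ that is the extension of $U'$ and $T$ --- but the conclusion $T\in\thick(U\oplus X_U)$ is what you actually use and is correct.)

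The one place you diverge is the step you flag as the main obstacle, namely $X_U\in\mathcal C$. The paper disposes of this in one line: $\mathcal C=S\ast S[1]$ is closed under extensions in $\mathcal D$ (\cite{cao_2021}*{Corollary 3.9}), and the rotated triangle $U'\to X_U\to T\to U'[1]$ shows $X_U$ is an extension of $U'\in\add U\subseteq\mathcal C$ and $T\in\mathcal C$. Your plan --- splicing two-term presentations of $T$ and $U'$ by the octahedral axiom and using $\Hom_{\mathcal D}(S,S[j])=0$ for $j\geq 1$ to split the offending connecting maps --- is precisely the standard proof of that extension-closedness, so it would work, but it re-derives a fact the paper (and your later appeal to Proposition \ref{pro:rigid}, which lives in the same framework) already has available. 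Likewise, at the very end you should note, as the paper does, that $\mathcal C$ is closed under direct summands (\cite{Iyama-Yoshino_2008}) so that the basic representative $(U\oplus X_U)^\flat$ indeed lies in $\mathcal C$; this is minor but worth one sentence.
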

\begin{proof}
The proof for the case $U\in \add(S)$ is the same as the proof \cite{cao_2021}*{Theorem 6.4}. Notice that if $U\in\add(S)$, then the assumption that $\Hom_{\mathcal D}(U,T[i])=0$
is automatically valid for any positive integer $i$. For the case $U\notin\add(S)$, the proof in \cite{cao_2021} still works under the assumption that $\Hom_{\mathcal D}(U,T[i])=0$ holds for any positive integer $i$. For the convenience of readers, we sketch the proof here.

Notice that the subcategory $\mathcal C=S\ast S[1]$ is closed under extensions in $\mathcal D$, by \cite{cao_2021}*{Corollary 3.9} and it is closed under direct summands, by \cite{Iyama-Yoshino_2008}*{Proposition
2.1(1)}.

Take a triangle 
\begin{eqnarray}\label{eqn:3gon}
\xymatrix{T[-1]\ar[r]^f&U^\prime\ar[r]&X_U\ar[r]&T}
\end{eqnarray}
with a minimal left $\add U$-approximation $f:T[-1]\rightarrow U^\prime$. By definition,  $B_U^-(T)=(U\oplus X_U)^\flat$.

We first show that $U\oplus X_U$ is an rigid object in $\mathcal C$, that is, $U\oplus X_U$ belongs to $\mathcal C$ and $$\Hom_{\mathcal D}(U\oplus X_U,U[1]\oplus X_U[1])=0.$$ By $U^\prime\in\add U\subseteq \mathcal C$, $T\in \mathcal C$ and the fact that 
 $X_U$ is an extension of $U^\prime$ and $T$, we get $X_U\in \mathcal C$ and thus $U\oplus X_U\in \mathcal C$. Since $U$ is presilting, we have $\Hom_{\mathcal D}(U,U[1])=0$. Hence, in order to show that $U\oplus X_U$ is an rigid object in $\mathcal C$, we still need to show 
$$\Hom_{\mathcal D}(X_U,U[1])=0\;\;\;\text{and}\;\;\;\Hom_{\mathcal D}(U\oplus X_U,X_U[1])=0.$$

Now we show $\Hom_{\mathcal D}(X_U, U[1])=0$.
Applying the functor $\Hom_{\mathcal D}(-, U)$ to the triangle ($\ref{eqn:3gon}$), we get the following exact sequence:
$$\xymatrix{\Hom_{\mathcal D}(U^\prime, U)\ar[r]&
\Hom_{\mathcal D}(T[-1], U)\ar[r]&
\Hom_{\mathcal D}(X_U[-1], U)\ar[r]&
\Hom_{\mathcal D}(U^\prime[-1], U)=0}.$$
Since $f$ is a left $\add U$-approximation of $T[-1]$, we get that the morphism  $$\xymatrix{\Hom_{\mathcal D}(U^\prime, U)\ar[r]&
\Hom_{\mathcal D}(T[-1], U)}$$ is surjective and thus the morphism $\xymatrix{\Hom_{\mathcal D}(X_U[-1], U)\ar[r]&
\Hom_{\mathcal D}(U^\prime[-1], U)=0}$ is injective. Hence, 
$$\Hom_{\mathcal D}(X_U, U[1])\cong\Hom_{\mathcal D}(X_U[-1], U)=0.$$

Now we show $\Hom_{\mathcal D}(X_U,T[1])=0$. Since $T$ is silting, we have $\Hom_{\mathcal D}(T,T[1])=0$. By the assumption that  $\Hom_{\mathcal D}(U,T[i])=0$ holds for any positive integer $i$, we have  $\Hom_{\mathcal D}(U,T[1])=0$. Since $X_U$ is an extension of $U^\prime\in\add U$ and $T$, we get  $\Hom_{\mathcal D}(X_U,T[1])=0$.

Notice that we have already known $\Hom_{\mathcal D}(U\oplus X_U,U[1])=0$ and $\Hom_{\mathcal D}(U\oplus X_U,T[1])=0$. Since $X_U$ is an extension of $U^\prime$ and $T$, we get $\Hom_{\mathcal D}(U\oplus X_U,X_U[1])=0$.

In summary, we get that $U\oplus X_U$ is an rigid object in $\mathcal C=S\ast S[1]$. Then by Proposition \ref{pro:rigid} below, we know that $U\oplus X_U$ is a presilting object in $\mathcal C$. Since $T$ is silting in $\mathcal C$ and by the triangle ($\ref{eqn:3gon}$), we know that the presilting object $U\oplus X_U$ is a silting object in  $\mathcal C$. Since $\mathcal C$ is closed under direct summands, we get that $B_U^-(T)=(U\oplus X_U)^\flat$ is a basic silting object in  $\mathcal C$.
\end{proof}

\begin{proposition}[\cite{cao_2021}*{Lemma 6.3}]  \label{pro:rigid}
Let $S$ be a basic silting object in $\mathcal D$, and let $U,T$ be any two objects in $\mathcal C=S\ast S[1]$. Then $\Hom_{\mathcal D}(U,T[i])=0$ for any $i\geq 2$. In particular, an object $U$ in $\mathcal C$ is presilting if and only if it is rigid, i.e., $\Hom_{\mathcal D}(U,U[1])=0$.
\end{proposition}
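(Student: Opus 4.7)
The plan is to prove the vanishing $\Hom_{\mathcal D}(U,T[i])=0$ for $i\geq 2$ by dévissage, using the defining triangles of $U$ and $T$ as objects of $\mathcal C = S\ast S[1]$ together with the silting property $\Hom_{\mathcal D}(S,S[j])=0$ for all $j>0$. First, I would unpack the definition: since $U,T\in S\ast S[1]$, there exist triangles
$$U'' \to U' \to U \to U''[1], \qquad T'' \to T' \to T \to T''[1]$$
with $U', U'', T', T''\in\add(S)$ (obtained by rotating the defining triangles with second term in $\add(S[1])$).

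Next, for fixed $i\geq 2$, I would shift the triangle for $T$ to get $T''[i]\to T'[i]\to T[i]\to T''[i+1]$ and apply $\Hom_{\mathcal D}(U,-)$, reducing the problem to showing $\Hom_{\mathcal D}(U,T'[i])=0$ and $\Hom_{\mathcal D}(U,T''[i+1])=0$. For each of these, apply $\Hom_{\mathcal D}(-,T'[i])$ (resp.\ $\Hom_{\mathcal D}(-,T''[i+1])$) to the triangle $U''\to U'\to U\to U''[1]$. This yields an exact sequence sandwiching the desired Hom between two groups of the form $\Hom_{\mathcal D}(S,S[j])$ with $j\in\{i-1,i\}$ (respectively $j\in\{i,i+1\}$). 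Since $i\geq 2$, all relevant shifts are strictly positive, so both outer groups vanish by the silting property of $S$, giving the required vanishing of the middle. Bookkeeping the indices is the only delicate point: for the $T'$-summand the lower bound $i\geq 2$ is tight (we need $i-1\geq 1$), while for the $T''$-summand $i\geq 1$ already suffices; intersecting gives $i\geq 2$.

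For the ``in particular'' statement, one direction is tautological: a presilting object satisfies $\Hom_{\mathcal D}(U,U[i])=0$ for all $i>0$, hence in particular for $i=1$, so it is rigid. Conversely, suppose $U\in\mathcal C$ is rigid, i.e.\ $\Hom_{\mathcal D}(U,U[1])=0$. Applying the first part of the proposition with $T=U$ gives $\Hom_{\mathcal D}(U,U[i])=0$ for all $i\geq 2$, which combined with the $i=1$ case covers all $i>0$. Thus $U$ is presilting. The main obstacle is purely notational/index-tracking; there is no conceptual difficulty beyond carefully invoking the silting condition twice in each dévissage step.
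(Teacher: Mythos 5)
Your dévissage argument is correct, and the index bookkeeping (needing $i\ge 2$ for the $\add S$-summand in degree $i$ and only $i\ge 1$ for the one in degree $i+1$) checks out; the paper itself gives no proof but cites \cite{cao_2021}*{Lemma 6.3}, whose argument is this same reduction to $\Hom_{\mathcal D}(S,S[j])=0$ for $j>0$ via the defining triangles of objects in $S\ast S[1]$. The only nitpick is the phrase ``second term in $\add(S[1])$'': in the defining triangle $X\to U\to Y\to X[1]$ it is the third term $Y$ that lies in $\add(S[1])$, but your rotated triangle $U''\to U'\to U\to U''[1]$ with $U',U''\in\add S$ is exactly right, so nothing in the proof is affected.
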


\begin{theorem}\label{mainthm3}
Let $S=\bigoplus_{i=1}^nS_i$ be a basic silting object in $\mathcal D$ and $U$ a basic presilting object in $\mathcal C=S\ast S[1]$. Let $M=\bigoplus_{i=1}^n M_i$ be a basic silting object in $\mathcal C$ and $N$ a left mutation of $M$ with $N$ still in $\mathcal C$. Suppose that $\Hom_{\mathcal D}(U,M[i])=0$ holds for any positive integer $i$ (thanks to Proposition \ref{pro:rigid}, this is equivalent to requiring $\Hom_{\mathcal D}(U,M[1])=0$). Then the following statements hold.
\begin{itemize}
\item[(i)] $\Hom_{\mathcal D}(U,N[i])=0$ holds for any positive integer $i$.
    \item[(ii)]  $B_U^-(M)$ and $B_U^-(N)$ are two basic silting objects in $\mathcal C$.
    \item[(iii)] Either  $B_U^-(N)$ equals $B_U^-(M)$ or  $B_U^-(N)$ is a left mutation of $B_U^-(M)$. Namely, in the following commutative diagram,
    $$\xymatrix{N\ar[d]^{B_U^-}&&&M\ar[d]^{B_U^-}\ar[lll]_{\text{left mutation}}\\
    B_U^-(N)&&&B_U^-(M)\ar[lll]_{\alpha}}$$
    $\alpha$ is either an identity or a left mutation.
\end{itemize}
\end{theorem}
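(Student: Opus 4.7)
The plan is to handle (i) and (ii) quickly and then reduce (iii) to combining Proposition~\ref{pro:common} with a single silting-order calculation. For (i), I unpack the definition of left mutation: $N$ is obtained from $M$ at some indecomposable summand $M_k$ via a triangle $M_k \to V \to N_k \to M_k[1]$ with $V \in \add(M/M_k) \subseteq \add M$ and $N = (M/M_k) \oplus N_k$. Applying $\Hom_{\mathcal D}(U,-[j])$ for $j\ge 1$, the outer terms $\Hom_{\mathcal D}(U,V[j])$ and $\Hom_{\mathcal D}(U,M_k[j+1])$ both vanish by the hypothesis on $\Hom_{\mathcal D}(U,M[\ast])$, so the long exact sequence forces $\Hom_{\mathcal D}(U,N_k[j])=0$ and hence $\Hom_{\mathcal D}(U,N[j])=0$. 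Part (ii) is then immediate: Proposition~\ref{pro-def} applies to both $M$ and $N$, using (i).

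For (iii), first note that Proposition~\ref{pro:common} gives that $B_U^-(M)$ and $B_U^-(N)$ share at least $n-1$ common indecomposable direct summands, where $n$ is the rank of $\K_0(\mathcal D)$. Since both are basic silting in $\mathcal C$ by (ii), each has exactly $n$ indecomposable summands by Theorem~\ref{thm:k0}, so either they coincide or they differ in exactly one indecomposable summand. In the latter case, the exchange property for silting mutation together with Theorem~\ref{thm:AIcover} guarantees that they are related by an irreducible mutation; to pin down its direction, I would verify the silting-order inequality $B_U^-(N)\le B_U^-(M)$, which then forces the mutation to be a left one.

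To establish $B_U^-(N)\le B_U^-(M)$, write $B_U^-(M) = (U\oplus X_M)^\flat$ and $B_U^-(N) = (U\oplus X_N)^\flat$ from the defining triangles $M[-1]\to U_M\to X_M\to M$ and $N[-1]\to U_N\to X_N\to N$ with $U_M,U_N\in\add U$; by Proposition~\ref{pro:rigid} it suffices to check $\Hom_{\mathcal D}(U\oplus X_M,(U\oplus X_N)[1])=0$. Three of the four pair-checks reuse inputs already at hand: $\Hom_{\mathcal D}(U,U[1])=0$ since $U$ is presilting; $\Hom_{\mathcal D}(U,X_N[1])=0$ by applying $\Hom_{\mathcal D}(U,-)$ to the triangle for $X_N$ and invoking (i); and $\Hom_{\mathcal D}(X_M,U[1])=0$ by the same short argument already used in the proof of Proposition~\ref{pro-def}. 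The main obstacle is $\Hom_{\mathcal D}(X_M,X_N[1])=0$: I would attack it by applying $\Hom_{\mathcal D}(X_M,-)$ to the triangle for $X_N$, reducing to $\Hom_{\mathcal D}(X_M,U_N[1])=0$ (handled as above with roles swapped) and $\Hom_{\mathcal D}(X_M,N[1])=0$, where the latter follows by applying $\Hom_{\mathcal D}(-,N[1])$ to the triangle for $X_M$ and combining the silting-order hypothesis $N\le M$ with part (i). This cascading of the two approximation triangles through $N\le M$ is the only genuinely new ingredient; the remaining pieces essentially repackage arguments already in place.
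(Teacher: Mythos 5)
Parts (i) and (ii) are correct and coincide with the paper's argument, and your verification of the order inequality $B_U^-(N)\le B_U^-(M)$ is also correct; it even streamlines the paper's computation. Where the paper first proves $\Hom_{\mathcal D}(X_M,M[i])=0=\Hom_{\mathcal D}(X_M,M[1+i])$ and then works through the decomposition $N=(L\oplus Z)^\flat$ coming from the mutation triangle, you instead apply $\Hom_{\mathcal D}(-,N[1])$ to the triangle $U_M\to X_M\to M\to U_M[1]$ and use the relation $N<M$ (which does hold, since a left mutation is in particular smaller by Theorem \ref{thm:AIcover}) together with part (i); and your reduction of the whole order check to degree $1$ via Proposition \ref{pro:rigid} is legitimate because both completions lie in $\mathcal C$.

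The genuine gap is the step where you pass from ``$B_U^-(M)$ and $B_U^-(N)$ share exactly $n-1$ indecomposable direct summands'' to ``they are related by an irreducible mutation'', which you justify by ``the exchange property for silting mutation together with Theorem \ref{thm:AIcover}''. No such exchange property is available for general silting objects in a triangulated category: an almost complete silting object may have infinitely many complements, which form a chain under the silting order, so two basic silting objects that share $n-1$ indecomposable summands and are comparable need not be neighbours --- they can be several irreducible mutations apart. Theorem \ref{thm:AIcover} only identifies left mutations with covers, and your inequality $B_U^-(N)\le B_U^-(M)$ does not by itself supply the cover property. What rescues the claim is precisely that both completions are two-term, i.e.\ lie in $\mathcal C=S\ast S[1]$: under the bijection of Theorem \ref{thm:OIY14} they correspond to basic $\tau$-tilting pairs sharing an almost $\tau$-tilting pair as a direct summand, and Theorem \ref{thmair} (exactly two completions of an almost $\tau$-tilting pair) shows they differ by a single mutation; your order computation then pins it down as a left mutation. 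This is how the paper closes the step, and your proof needs this two-term/$\tau$-tilting input (or an equivalent argument) inserted; as written, the cited tools do not yield the irreducibility of the mutation.
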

\begin{proof}
(i) Say $N$ is a left mutation of $M=\bigoplus_{i=1}^n M_i$ at $M_k$ and denote by $L=\bigoplus_{j\neq k}M_j$. Take a triangle
\begin{eqnarray}\label{eqn:triZ}
\xymatrix{M_k\ar[r]^f&L^\prime \ar[r]&Z\ar[r]&M_k[1]}
\end{eqnarray}
with a minimal left $\add L$-approximation $f:M_k\rightarrow L^\prime$ of $M_k$. By the definition of left mutation, we know that $N=(L\oplus Z)^\flat$.

Fix a positive integer $i$, we need to show $\Hom_{\mathcal D}(U,N[i])=0$. It suffices to show $\Hom_{\mathcal D}(U,L[i])=0$ and $\Hom_{\mathcal D}(U,Z[i])=0$. By $\Hom_{\mathcal D}(U,M[i])=0=\Hom_{\mathcal D}(U,M[1+i])$ and  $L,L^\prime,M_k\in \add M$, we get that $\Hom_{\mathcal D}(U,L[i])=0$, $\Hom_{\mathcal D}(U,L^\prime[i])=0$ and $\Hom_{\mathcal D}(U,M_k[1+i])=0$.
By the triangle ($\ref{eqn:triZ}$), we know that $Z$ is an extension of $L^\prime$ and $M_k[1]$. Using this fact, we get  $\Hom_{\mathcal D}(U,Z[i])=0$. Hence, $\Hom_{\mathcal D}(U,N[i])=0$, where $i$ is any fixed positive integer.

(ii)  Since $\Hom_{\mathcal D}(U,M[i])=0$ and $\Hom_{\mathcal D}(U,N[i])=0$ hold for any positive integer $i$ and by Proposition \ref{pro-def}, we know that $B_U^-(M)$ and $B_U^-(N)$ are basic silting objects in $\mathcal C$.

(iii) 
By (ii),  we know that $B_U^-(M)$ and $B_U^-(N)$ are basic silting objects in $\mathcal C$. Thus they have $n$ indecomposable direct summands, by Theorem \ref{thm:k0}.

By Proposition \ref{pro:common}, 
$B_U^-(M)$ and $B_U^-(N)$ have at least $n-1$ common indecomposable direct summands.
So we get that  $B_U^-(M)$ and $B_U^-(N)$ have either $n-1$ common indecomposable direct summands or $n$ common indecomposable direct summands.

If $B_U^-(M)$ and $B_U^-(N)$ have $n$ common indecomposable direct summands, then they are the same.

If $B_U^-(M)$ and $B_U^-(N)$ have $n-1$ common indecomposable direct summands, then they differ by a mutation, by Theorem \ref{thm:OIY14} and Theorem \ref{thmair}. In this case, we still need to show that the mutation from $B_U^-(M)$ to $B_U^-(N)$ is a left mutation. It suffices to show $B_U^-(N)\leq B_U^-(M)$, by Theorem \ref{thm:AIcover} (ii).

By definition, $B_U^-(N)\leq B_U^-(M)$ if and only if $\Hom_{\mathcal D}(B_U^-(M),B_U^-(N)[i])=0$ for any positive integer $i$. Let us first recall the constructions of $B_U^-(M)$ and $B_U^-(N)$. Take two triangles
\begin{eqnarray}
\xymatrix{M[-1]\ar[r]^{f}&U_M\ar[r]&X\ar[r]&M },\nonumber\\
\xymatrix{N[-1]\ar[r]^{g}&U_N\ar[r]&Y\ar[r]&N},\nonumber
\end{eqnarray}
where $f$ is a minimal left $\add U$-approximation of $M[-1]$ and $g$ is a minimal left $\add U$-approximation of $N[-1]$. Then $B_U^-(M)=(U\oplus X)^\flat$ and $B_U^-(N)=(U\oplus Y)^\flat$.

Fix a positive integer $i$, we need to show  $\Hom_{\mathcal D}(B_U^-(M),B_U^-(N)[i])=0$, equivalently, to show  $\Hom_{\mathcal D}(U\oplus X,(U\oplus Y)[i])=0$. Since $B_U^-(M)=(U\oplus X)^\flat$ and $B_U^-(N)=(U\oplus Y)^\flat$ are silting objects in $\mathcal D$, we have
$$\Hom_{\mathcal D}(U,U[j])=0,\;\;\Hom_{\mathcal D}(U,Y[j])=0,\;\;\Hom_{\mathcal D}(X,U[j])=0,\;\;\Hom_{\mathcal D}(X,X[j])=0$$
for any $j>0$. In particular, these equalities hold for $j=i$. It remains to show $\Hom_{\mathcal D}(X,Y[i])=0$.

We first show that $\Hom_{\mathcal D}(X,M[i])=0=\Hom_{\mathcal D}(X,M[1+i])$. This actually follows from the fact that $M$ is an extension of $X$ and $U_M[1]\in\add(U[1])$ and we have already known $$\Hom_{\mathcal D}(X,X[j])=0=\Hom_{\mathcal D}(X,U[1+j])$$ for any positive integer $j$.

Now we show that $\Hom_{\mathcal D}(X,N[i])=0$. 
Since $N=(L\oplus Z)^\flat$, it suffices to show 
$\Hom_{\mathcal D}(X,L[i])=0$ and $\Hom_{\mathcal D}(X,Z[i])=0$. By $\Hom_{\mathcal D}(X,M[i])=0$ and $L,L'\in\add M$, we have  $$\Hom_{\mathcal D}(X,L[i])=0=\Hom_{\mathcal D}(X,L'[i]).$$ Since $Z$ is an extension of $L^\prime$ and $M_k[1]\in\add(M[1])$, we get $\Hom_{\mathcal D}(X,Z[i])=0$. Thus $\Hom_{\mathcal D}(X,N[i])=0$.

Since  $Y$ is an extension of $U_N\in\add U$ and $N$, we get $\Hom_{\mathcal D}(X,Y[i])=0$.
In summary, we have proved  $\Hom_{\mathcal D}(U\oplus X,(U\oplus Y)[i])=0$ for any fixed positive integer $i$. This implies $$\Hom_{\mathcal D}(B_U^-(M),B_U^-(N)[i])=0$$ for any $i>0$ and $B_U^-(N)\leq B_U^-(M)$. So the mutation from $B_U^-(M)$ to $B_U^-(N)$ is a left mutation.
\end{proof}

\subsection{Compatibility of left completions in silting theory and $\tau$-tilting theory}

Let $S$ be a basic silting object in $\mathcal D$ and denote by $\mathcal C=S\ast S[1]$. By \cite{cao_2021}*{Corollary 3.9}, $\mathcal C$ is  closed under extensions in $\mathcal D$. Hence, it inherits some structure from the triangulated structure of $\mathcal D$: It is an extriangulated category in the sense of \cite{NP_2019}.

Let $\underline{\mathcal C}$ be the quotient of $\mathcal C$ by the ideal of $\mathcal C$ generated by the morphisms $f$ such that $\Hom_{\mathcal D}(f,X[1])=0$ for any $X\in\mathcal C$. We call  $\underline{\mathcal C}$ the {\em stable category} of $\mathcal C$. Dually, we can define the {\em costable category} $\overline{\mathcal C}$ to be the quotient of $\mathcal C$ by the ideal of $\mathcal C$ generated by the morphisms $g$ such that $\Hom_{\mathcal D}(Y[-1],g)=0$ for any $Y\in\mathcal C$.

Notice that the extriangulated category $\mathcal C$ has enough projectives and enough injectives (in the sense of \cite{NP_2019}*{Definition 3.25}) with  $\proj(\mathcal C)=\add S$ and $\inj(\mathcal C)=\add(S[1])$. Then by \cite{INP_2018}*{Remark 1.23}, we have
$$\underline{\mathcal C}=\frac{\mathcal C}{[S]}\;\;\;\text{and}\;\;\; \overline{\mathcal C}=\frac{\mathcal C}{[S[1]]},$$
where $[S]$ and $[S[1]]$ are the ideals of $\mathcal C$ consisting of morphisms factoring through  $\add S$ and $\add (S[1])$, respectively.

\begin{definition}[Auslander–Reiten–Serre duality, \cite{INP_2018}*{Definition 3.4}]
An {\em Auslander–Reiten–Serre (ARS) duality} of $\mathcal C=S\ast S[1]$ is a pair $(\tau_S,\eta)$ such that $\tau_S:\underline{\mathcal C}\rightarrow \overline{\mathcal C}$ is an equivalence 
of additive categories and $\eta$ is a natural isomorphism
\begin{eqnarray}\label{eqn:tau1}
\eta_{X,Y}:\Hom_{\underline{\mathcal C}}(X,Y)\cong\mathbb D\Hom_{\mathcal C}(Y,\tau_SX[1]),
\end{eqnarray}
where $\mathbb D$ is the $K$-dual and $X,Y\in\mathcal C$.
\end{definition}

\begin{remark}\label{rmk:ars}
Since  $\tau_S:\underline{\mathcal C}\rightarrow \overline{\mathcal C}$ is an equivalence, the equality ($\ref{eqn:tau1}$) is equivalent to $$\Hom_{\overline{\mathcal C}}(\tau_SX,\tau_SY)\cong\mathbb D\Hom_{\mathcal C}(Y,\tau_SX[1]),$$i.e., $\Hom_{\overline{\mathcal C}}(X,\tau_SY)\cong\mathbb D\Hom_{\mathcal C}(Y,X[1]),$ for any $X,Y\in\mathcal C$.
\end{remark}

{\em In the rest part of this section, we further assume that the extriangulated category $\mathcal C=S\ast S[1]$ has an ARS-duality $(\tau_S,\eta)$}. For example, we can take $\mathcal D=\K^b(\proj A)$ the bounded homotopy category of finitely generated projective modules $\proj A$ for a finite dimensional algebra $A$ and take $S=A$ the stalk complex concentrated in degree $0$, cf., \cite{INP_2018}*{Example 5.18}.

The following decomposition will be used frequently without further explanation. For any object $U\in\mathcal C=S\ast S[1]$, we write 
\begin{eqnarray}\label{eqn:notation}
    U=X_U\oplus Y_U[1]
\end{eqnarray}
 for a decomposition such that $Y_U[1]$ is a maximal direct summand of $U$ that belongs to $\add (S[1])$. Such a decomposition is unique up to isomorphisms.

{\em Setting:} Let $A=\End_{\mathcal D}(S)$ be the endomorphism algebra of $S$. By Theorem \ref{thm:OIY14}, we have a functor  $F=\Hom_{\mathcal D}(S,-):\mathcal C=S\ast S[1]\rightarrow \mod A$, which induces an equivalence $\overline{(-)}:\overline{\mathcal C}\rightarrow\mod A
$ and  a bijection from presilting objects in $\mathcal C$ to $\tau$-rigid pairs in $\mod A$ via
    $$U=X_U\oplus Y_U[1]\mapsto (F(X_U),F(Y_U))=(\overline X_U,\overline Y_U).$$

\begin{proposition}\label{pro:compare1}
Keep the above setting. Let $U=X_U\oplus Y_U[1]$ be a basic presilting object in $\mathcal C=S\ast S[1]$ and $T=X_T\oplus Y_T[1]$ a basic silting object in $\mathcal C$. Then the following conditions are equivalent.
\begin{itemize}
    \item [(a)] $\Hom_{\mathcal D}(U,T[i])=0$ for any positive integer $i$.
    \item[(b)]  $\Hom_{\mathcal D}(U,T[1])=0$.
    \item[(c)] $\Hom_{\mathcal D}(U,X_T[1])=0$.
    \item[(d)] $\Hom_{\overline{\mathcal C}}(Y_U,X_T)=0$ and $\Hom_{\overline{\mathcal C}}(X_T,\tau_{S}X_U)=0$.
    \item [(e)] $\Fac \overline X_T$ is a subcategory of  $\prescript{\bot}{}({\tau \overline X_U})\cap \overline Y_U^\bot$.
\end{itemize}

\end{proposition}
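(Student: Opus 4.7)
The plan is to verify the chain of equivalences $(a)\Leftrightarrow(b)\Leftrightarrow(c)\Leftrightarrow(d)\Leftrightarrow(e)$ by systematically decomposing $U=X_U\oplus Y_U[1]$ and $T=X_T\oplus Y_T[1]$ and transporting the resulting Hom-vanishing conditions first through the ARS duality $(\tau_S,\eta)$ and then through the equivalence $\overline{(-)}\colon\overline{\mathcal C}\xrightarrow{\sim}\mod A$. The equivalence $(a)\Leftrightarrow(b)$ is immediate from Proposition \ref{pro:rigid}, which guarantees $\Hom_{\mathcal D}(U,T[i])=0$ for all $i\geq 2$ whenever $U,T\in\mathcal C$.

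For $(b)\Leftrightarrow(c)$, I would split $T[1]=X_T[1]\oplus Y_T[2]$ and show that $\Hom_{\mathcal D}(U,Y_T[2])=0$ is automatic. Since $Y_T\in\add S$ and $U\in\mathcal C=S\ast S[1]$ fits in a triangle $S'\to U\to S''[1]\to S'[1]$ with $S',S''\in\add S$, applying $\Hom_{\mathcal D}(-,Y_T[2])$ sandwiches the desired group between two zero groups using the silting relations $\Hom_{\mathcal D}(S,S[j])=0$ for $j\geq 1$.

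For $(c)\Leftrightarrow(d)$, the decomposition of $U$ yields
\[
\Hom_{\mathcal D}(U,X_T[1])=\Hom_{\mathcal D}(X_U,X_T[1])\oplus \Hom_{\mathcal D}(Y_U,X_T).
\]
The second summand coincides with $\Hom_{\overline{\mathcal C}}(Y_U,X_T)$: since $Y_U\in\add S$, every morphism $Y_U\to X_T$ factoring through $\add(S[1])$ vanishes, again because $\Hom_{\mathcal D}(S,S[1])=0$. For the first summand, I would apply the ARS duality (cf.\ Remark \ref{rmk:ars}) with $X=X_T$ and $Y=X_U$ to obtain
\[
\Hom_{\overline{\mathcal C}}(X_T,\tau_SX_U)\cong\mathbb D\Hom_{\mathcal D}(X_U,X_T[1]),
\]
so (c) holds precisely when both $\Hom_{\overline{\mathcal C}}(Y_U,X_T)$ and $\Hom_{\overline{\mathcal C}}(X_T,\tau_SX_U)$ vanish, which is exactly (d).

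Finally, for $(d)\Leftrightarrow(e)$, I would transport condition (d) along the equivalence $\overline{(-)}\colon\overline{\mathcal C}\xrightarrow{\sim}\mod A$ of Theorem \ref{thm:OIY14}(i), under which $\tau_SX_U$ corresponds to $\tau\overline X_U$; then (d) rewrites as $\overline X_T\in\prescript{\bot}{}(\tau\overline X_U)\cap\overline Y_U^\bot$. The implication $(e)\Rightarrow(d)$ is trivial since $\overline X_T\in\Fac\overline X_T$, and $(d)\Rightarrow(e)$ follows from Proposition \ref{prominmax}(i), which identifies $\prescript{\bot}{}(\tau\overline X_U)\cap\overline Y_U^\bot$ as a torsion class in $\mod A$ and is hence closed under taking factor modules. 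The step I expect to require the most care is the identification $\overline{\tau_SX_U}\cong\tau\overline X_U$ under the equivalence: this matches the ARS duality on $\mathcal C$ against the classical AR formula in $\mod A$ via the presilting/$\tau$-rigid dictionary of Theorem \ref{thm:OIY14}(ii), and is the real conceptual content linking the triangulated and abelian sides.
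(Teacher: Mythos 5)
Your proposal is correct and follows essentially the same route as the paper's proof: (a)$\Leftrightarrow$(b) via Proposition \ref{pro:rigid}, (b)$\Leftrightarrow$(c) by splitting off $\Hom_{\mathcal D}(U,Y_T[2])=0$, (c)$\Leftrightarrow$(d) via the decomposition of $U$, the vanishing of $[S[1]]$-factorizations out of $Y_U$, and the ARS duality of Remark \ref{rmk:ars}, and (d)$\Leftrightarrow$(e) by transporting along $\overline{(-)}$ (your appeal to Proposition \ref{prominmax} (i) is an acceptable substitute for the paper's direct closure arguments). The one place you are thinner than the paper is the identification $\overline{\tau_S X_U}\cong\tau\overline X_U$, which you rightly flag as the crux but only sketch by ``matching'' the ARS duality with the classical AR formula --- a comparison that is delicate, since $\Hom_{\mathcal D}(Y,X[1])$ need not agree with $\Ext^1_A(\overline Y,\overline X)$ for arbitrary $X,Y\in\mathcal C$; the paper instead simply cites \cite{Iyama-Yoshino_2008}*{Corollary 6.5 (3)(iii)}, using that $X_U$ has no direct summands in $\add(S[1])$, and with that citation (or an actual proof of the identification) your argument is complete.
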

\begin{proof}
``(a)$\Longleftrightarrow$(b)": This follows from Proposition \ref{pro:rigid}.

``(b)$\Longleftrightarrow$(c)": Since $T=X_T\oplus Y_T[1]$, we have $\Hom_{\mathcal D}(U,T[1])\cong \Hom_{\mathcal D}(U, X_T[1])\oplus \Hom_{\mathcal D}(U,Y_T[2])$. Notice that for any object $Z\in \mathcal C=S\ast S[1]$, we always have   $\Hom_{\mathcal D}(Z,S[2])=0$. By $Y_T\in \add S$, we get that $\Hom_{\mathcal D}(U,Y_T[2])$ is always $0$. Hence, (b) and (c) are equivalent.

``(c)$\Longleftrightarrow$(d)": By $U=X_U\oplus Y_U[1]$, we know that
$\Hom_{\mathcal D}(U,X_T[1])\cong\Hom_{\mathcal D}(X_U,X_T[1])\oplus \Hom_{\mathcal D}(Y_U,X_T)$. In order to show the equivalence of (c) and (d), it suffices to show $\Hom_{\overline{\mathcal C}}(Y_U,X_T)\cong\Hom_{\mathcal D}(Y_U,X_T)$ and $\Hom_{\overline{\mathcal C}}(X_T,\tau_{S}X_U) \cong \mathbb D\Hom_{\mathcal D}(X_U, X_T[1])$.

Since $Y_U$ is in $\add S$ and by $\Hom_{\mathcal D}(S,S[1])=0$, we know that any morphism in $\Hom_{\mathcal C}(Y_U,X_T)$ that can factor through $\add (S[1])$ must be zero. Thus we get 
$$\Hom_{\overline{\mathcal C}}(Y_U,X_T)\cong \Hom_{\mathcal C}(Y_U,X_T)=\Hom_{\mathcal D}(Y_U,X_T).$$

By ARS duality in Remark \ref{rmk:ars}, we have 
$$\Hom_{\overline{\mathcal C}}(X_T,\tau_{S}X_U)\cong \mathbb D\Hom_{\mathcal C}(X_U, X_T[1])=\mathbb D\Hom_{\mathcal D}(X_U, X_T[1]).$$

Hence, (c) and (d) are equivalent.

``(d)$\Longleftrightarrow$(e)": 
Since $X_U$ has no direct summands in $\add(S[1])$ and by \cite{Iyama-Yoshino_2008}*{Corollary 6.5 (3)(iii)}, we know that $\overline{\tau_S X_U}=\tau \overline X_U$.  Since the functor $\overline{(-)}:\overline{\mathcal C}\rightarrow\mod A
$ is an equivalence, we know that $$\Hom_{\overline{\mathcal C}}(Y_U,X_T)=0=\Hom_{\overline{\mathcal C}}(X_T,\tau_{S}X_U)$$ holds if and only if $$\Hom_A(\overline Y_U, \overline X_T)=0=\Hom_A(\overline X_T, \overline {\tau_SX_U})=\Hom_A(\overline X_T, \tau \overline X_U)$$ holds.
It is easy to see $\Hom_A(\overline X_T, \tau \overline X_U)=0$ if and only if $\Fac \overline X_T\subseteq \prescript{\bot}{}({\tau \overline X_U})$. Since $\overline Y_U$ is a projective $A$-module, $\Hom_A(\overline Y_U, \overline X_T)=0$ if and only if $\Fac \overline X_T\subseteq \overline Y_U^\bot$.
Hence, (d) and (e) are equivalent. 
\end{proof}

\begin{theorem}\label{mainthm4}
Keep the above setting. Let $U=X_U\oplus Y_U[1]$ be a basic presilting object in $\mathcal C=S\ast S[1]$ and $T=X_T\oplus Y_T[1]$ a basic silting object in $\mathcal C$. Suppose that $\Hom_{\mathcal D}(U,T[i])=0$ holds for any positive integer $i$. Then the following statements hold.
\begin{itemize}
    \item [(i)] $B_U^-(T)$ is a basic silting object in $\mathcal C$. In particular, it corresponds to a basic $\tau$-tilting pair $(\overline X_M,\overline Y_M)$ in $\mod A$, where $M:=B_U^-(T)=X_M\oplus Y_M[1]$.
    \item[(ii)] The left Bongartz completion 
    $B_{(\overline X_U,\overline Y_U)}^-(\overline X_T,\overline Y_T)$ of the $\tau$-rigid pair $(\overline X_U,\overline Y_U)$ with respect to the $\tau$-tilting pair $(\overline X_T,\overline Y_T)$ exists in $\mod A$.
    \item[(iii)] The two basic $\tau$-tilting pairs
$(\overline X_M,\overline Y_M)$ and $B_{(\overline X_U,\overline Y_U)}^-(\overline X_T,\overline Y_T)$ are the same up to isomorphisms. Namely, we have the following commutative diagram.
$$\xymatrix{T=X_T\oplus Y_T[1]\ar[d]^{B_U^-}\ar[rrr]^{\overline{(-)}}&&&(\overline X_T,\overline Y_T)\ar[d]^{B_{(\overline X_U,\overline Y_U)}^-}\\
M=X_M\oplus Y_M[1]\ar[rrr]^{\overline{(-)}}&&&(\overline X_M,\overline Y_M)\cong B_{(\overline X_U,\overline Y_U)}^-(\overline X_T,\overline Y_T)}$$

\end{itemize}
\end{theorem}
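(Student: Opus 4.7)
Parts (i) and (ii) are immediate from earlier results. By Proposition \ref{pro-def}, $M := B_U^-(T)$ is a basic silting object in $\mathcal C$, which corresponds via Theorem \ref{thm:OIY14}(ii) to a basic $\tau$-tilting pair $(\overline X_M, \overline Y_M)$, giving (i). Proposition \ref{pro:compare1} translates the hypothesis $\Hom_{\mathcal D}(U,T[i])=0$ into $\Fac \overline X_T \subseteq \prescript{\bot}{}(\tau \overline X_U)\cap \overline Y_U^\bot$, so Proposition \ref{pro:finite}(v) guarantees the existence of $B^-_{(\overline X_U,\overline Y_U)}(\overline X_T,\overline Y_T)$ and identifies its associated functorially finite torsion class as $\Fac \overline X_U \ast \Fac \overline X_T$, giving (ii).

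Part (iii) is the main content, and by Theorem \ref{proorder}(ii) it reduces to the identity of torsion classes $\Fac \overline X_M = \Fac \overline X_U \ast \Fac \overline X_T$. My plan is to apply the functor $F=\Hom_{\mathcal D}(S,-)$ to the defining triangle of $B_U^-(T)$, rotated to
\[
U'\longrightarrow X\longrightarrow T\longrightarrow U'[1],
\]
with $U'\in\add U$. The key elementary fact is that $\Hom_{\mathcal D}(S,Z[1])=0$ for every $Z\in\mathcal C=S\ast S[1]$: shifting a defining triangle $A\to Z\to B\to A[1]$ with $A\in\add S$, $B\in\add(S[1])$ and applying $\Hom_{\mathcal D}(S,-)$ yields vanishing on both sides because $\Hom_{\mathcal D}(S,S[i])=0$ for $i\geq 1$. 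Applied to $U'\in\mathcal C$, the long exact sequence then collapses to the right-exact sequence
\[
F(U')\longrightarrow F(X)\longrightarrow \overline X_T \longrightarrow 0
\]
in $\mod A$. Since $F$ annihilates $\add(S[1])$, we have $F(U')\in\add \overline X_U$; letting $K$ be the image of $F(U')\to F(X)$, this yields a short exact sequence $0\to K\to F(X)\to \overline X_T \to 0$ with $K\in\Fac \overline X_U$.

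Using $M=(U\oplus X)^\flat$ and that $F$ kills $\add(S[1])$, we have $\add \overline X_M=\add(\overline X_U\oplus F(X))$. The short exact sequence shows $F(X)\in \Fac \overline X_U\ast \Fac \overline X_T$, hence $\overline X_M\in\Fac \overline X_U\ast \Fac \overline X_T$, so $\Fac \overline X_M\subseteq \Fac \overline X_U\ast \Fac \overline X_T$ by closure under quotients (the right-hand side is a torsion class by Proposition \ref{pro:finite}(v)). Conversely, $\overline X_U$ is a summand of $\overline X_M$ while $\overline X_T$ is a quotient of $F(X)\in\add \overline X_M$, so both $\Fac \overline X_U$ and $\Fac \overline X_T$ are contained in the torsion class $\Fac \overline X_M$; closure under extensions yields the reverse containment. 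The main technical subtlety is the careful tracking of shift/non-shift decompositions under $F$; once the vanishing $\Hom_{\mathcal D}(S,U'[1])=0$ is in place, the short exact sequence and the two containments follow cleanly.
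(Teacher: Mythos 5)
Your proposal is correct and follows essentially the same route as the paper: parts (i) and (ii) via Proposition \ref{pro-def}, Theorem \ref{thm:OIY14} and Propositions \ref{pro:compare1}, \ref{pro:finite}(v), and part (iii) by applying $F=\Hom_{\mathcal D}(S,-)$ to the defining triangle, using $\Hom_{\mathcal D}(S,U'[1])=0$ to obtain the short exact sequence $0\to K\to F(X)\to \overline X_T\to 0$ with $K\in\Fac\overline X_U$, and then proving both inclusions of $\Fac\overline X_M=\Fac\overline X_U\ast\Fac\overline X_T$ exactly as the paper does.
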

\begin{proof}
(i) By Proposition \ref{pro-def}, we know that $B_U^-(T)$ is a basic silting object in $\mathcal C$. Then by Theorem \ref{thm:OIY14}, we have that $(\overline X_M,\overline Y_M)$ is a basic $\tau$-tilting pair in $\mod A$.

(ii) By the equivalence of (a) and (e) in Proposition \ref{pro:compare1}, we get that $\Fac \overline X_T$ is a subcategory of  $\prescript{\bot}{}({\tau \overline X_U})\cap \overline Y_U^\bot$. Then by Proposition \ref{pro:finite} (v), we know that the left Bongartz completion 
    $B_{(\overline X_U,\overline Y_U)}^-(\overline X_T,\overline Y_T)$ of $(\overline X_U,\overline Y_U)$ with respect to $(\overline X_T,\overline Y_T)$ exists in $\mod A$.

(iii) By the definition of  $B_{(\overline X_U,\overline Y_U)}^-(\overline X_T,\overline Y_T)$ and Proposition \ref{pro:finite} (v),
we know that  $B_{(\overline X_U,\overline Y_U)}^-(\overline X_T,\overline Y_T)$ is the basic $\tau$-tilting pair corresponding to the functorially finite torsion class $\Fac \overline X_U\ast \Fac \overline X_T$  in $\mod A$. In order to show that $(\overline X_M,\overline Y_M)$ is isomorphic to $B_{(\overline X_U,\overline Y_U)}^-(\overline X_T,\overline Y_T)$, it suffices to show  $$\Fac \overline X_M=\Fac \overline X_U\ast \Fac \overline X_T.$$
Let us first recall the construction of $M=B_U^-(T)$. Take a triangle
$$\xymatrix{T[-1]\ar[r]^f&U^\prime\ar[r]&Z\ar[r]&T}$$ 
with a minimal left $\add U$-approximation $f:T[-1]\rightarrow U^\prime$. Then  $$M=B_U^-(T)=(U\oplus Z)^\flat.$$ Applying the functor $\Hom_{\mathcal D}(S,-)$ to the above triangle, we get the following exact sequence:
$$\Hom_{\mathcal D}(S,U^\prime)\rightarrow \Hom_{\mathcal D}(S,Z)\rightarrow \Hom_{\mathcal D}(S,T)\rightarrow \Hom_{\mathcal D}(S,U^\prime[1]).
$$
By $U^\prime\in \add U\subseteq\mathcal C= S\ast S[1]$, we get $\Hom_{\mathcal D}(S,U^\prime[1])=0$. Notice that for any object $C\in\mathcal C$, we actually have $\Hom_{\mathcal D}(S,C)=\overline C$. Since $U^\prime, Z$ and $T$ are objects in $\mathcal C$, we can rewrite the above exact sequence as follows:
$$\xymatrix{\overline {U^\prime}\ar[r]^{g}&\overline Z\ar[r]^{p}&\overline T\ar[r]&0}.$$
Consider the following short exact sequence:
\begin{eqnarray}\label{eqn:exact}
\xymatrix{0\ar[r]&\overline N\ar[r]^{i}&\overline Z\ar[r]^{p}&\overline T\ar[r]&0},\nonumber
\end{eqnarray}
where $\overline N$ is the image of $g$ and thus $\overline N\in\Fac \overline{U^\prime}\subseteq \Fac\overline{U}$.

Before giving the rest of the proof, let us point out some obvious facts:
$$\overline U=\overline X_U,\;\; \overline T=\overline X_T,\;\; \overline M=\overline X_M.$$

Recall that we want to show $\Fac \overline X_M=\Fac \overline X_U\ast \Fac \overline X_T$. We first show $\Fac \overline X_U\ast \Fac \overline X_T\subseteq \Fac \overline X_M$.  It suffices to show that $\overline X_U$ and $\overline X_T$ are in $\Fac \overline X_M$, because  $\Fac \overline X_M$ is closed under quotient modules and extensions. Because $U$ is in $\add M=\add (U\oplus Z)$, we have
$$\overline X_U=\overline U\in \Fac \overline M=\Fac \overline X_M.$$
Since the morphism $p:\overline Z\rightarrow \overline T$ is surjective and by $Z\in\add M=\add (U\oplus Z)$, we have
$$\overline X_T=\overline T\in\Fac \overline Z\subseteq\Fac \overline M=\Fac \overline X_M.$$
Hence, the inclusion $\Fac \overline X_U\ast \Fac \overline X_T\subseteq \Fac \overline X_M$ holds.

Now let us prove the converse inclusion 
$\Fac \overline X_M\subseteq \Fac \overline X_U\ast \Fac \overline X_T$. It suffices to show that $\overline X_M=\overline M$ is in $\Fac \overline X_U\ast \Fac \overline X_T$, because $\Fac \overline X_U\ast \Fac \overline X_T$ is closed under quotient modules. Clearly, we have $$\overline U=\overline X_U\in \Fac \overline X_U\ast \Fac \overline X_T.$$
Thanks to the short exact sequence $\xymatrix{0\ar[r]&\overline N\ar[r]^{i}&\overline Z\ar[r]^{p}&\overline T\ar[r]&0}$, we know that $\overline Z$ is an extension of $\overline N$ and $\overline T$. Since  $\overline N\in\Fac \overline U=\Fac \overline X_U$ and $\overline T=\overline X_T\in\Fac \overline X_T$, we get
$$\overline Z\in \Fac \overline X_U\ast \Fac \overline X_T.$$
Then by $M\in \add(U\oplus Z)$, we get
 $\overline X_M=\overline M\in\Fac \overline X_U\ast \Fac \overline X_T$.
 So $\Fac \overline X_M\subseteq \Fac \overline X_U\ast \Fac \overline X_T$ and thus $\Fac \overline X_M=\Fac \overline X_U\ast \Fac \overline X_T$.
 
 Hence, the two basic $\tau$-tilting pairs
$(\overline X_M,\overline Y_M)$ and $B_{(\overline X_U,\overline Y_U)}^-(\overline X_T,\overline Y_T)$ correspond to the same functorially finite torsion class in $\mod A$ and thus they are the same up to isomorphisms.
\end{proof}

\section{Remarks on the definition of relative right Bongartz completions}\label{sec:5}

\subsection{Relative right Bongartz completions in $\tau$-tilting theory}
Let $\proj A$ be the full subcatgory of $\mod A$ consisting of projective modules. Denote by $A^{\rm op}$ the opposite algebra of $A$. 

Let  $(-)^\ast:=\Hom_A(-,A):\proj A\rightarrow\proj A^{\rm op}$ be the $A$-dual. For $X\in\mod A$ with a minimal projective presentation
$$\xymatrix{P_1\ar[r]^{d_1} & P_0\ar[r]^{d_0}&X\ar[r]&0}$$
the {\em transpose} $\Tr X\in\mod A^{\rm op}$  of $X$ is defined by  the following exact sequence:
$$\xymatrix{P_0^\ast\ar[r]^{d_1^\ast} & P_1^\ast\ar[r]&\Tr X\ar[r]&0}.$$
It is easy to see that if $X$ is a projective module, then $\Tr X=0$.

The following decomposition will be used frequently without further explanation. For any module $X\in\mod A$, we decompose it as $$X=X_{\rm pr}\oplus X_{\rm np},$$ where $X_{\rm pr}$ is a maximal
projective direct summand of $X$. Such a decomposition is unique up to isomorphisms. For a $\tau$-rigid pair $(U,Q)$ in $\mod A$, let  $$(U,Q)^\dag:=( Q^\ast \oplus \Tr U_{\rm np},U_{\rm pr}^\ast).$$
Clearly, for a $\tau$-rigid pair $(U\oplus U',Q\oplus Q')$, we have $(U\oplus U',Q\oplus Q')^\dag=(U,Q)^\dag\oplus (U',Q')^\dag$.
\begin{theorem}
[\cite{adachi_iyama_reiten_2014}*{Theorem 2.14, Proposition 2.27}]
\label{thm:air214}
\begin{itemize}
    \item [(i)] The map $(-)^\dag:(U,Q)\mapsto (U,Q)^\dag$ induces a bijection from $\tau$-rigid pairs in $\mod A$ to those in $\mod A^{\rm op}$ and one has $$(U,Q)^{\dag\dag}=(U,Q)$$ for any $\tau$-rigid pair $(U,Q)$ in $\mod A$.
    \item[(ii)] The map $(-)^\dag$ restricts to an order-reversing bijection from basic $\tau$-tilting pairs in $\mod A$ to those in $\mod A^{\rm op}$.
\end{itemize}
\end{theorem}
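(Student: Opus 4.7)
My plan is to verify (i) via a direct computation of the involution combined with an appeal to two-term silting theory for preservation of $\tau$-rigidity, and then to deduce (ii) from the order-reversing duality on two-term silting complexes. The involution $(U,Q)^{\dag\dag} = (U,Q)$ is essentially a bookkeeping check: in $(U,Q)^\dag = (Q^\ast \oplus \Tr U_{\rm np},\, U_{\rm pr}^\ast)$, the summand $Q^\ast$ is projective in $\mod A^{\rm op}$ while $\Tr U_{\rm np}$ has no nonzero projective summand (transpose annihilates projectives and is injective on isoclasses of non-projective indecomposables modulo projectives), so the canonical decomposition reads $(Q^\ast \oplus \Tr U_{\rm np})_{\rm pr} = Q^\ast$ and $(Q^\ast \oplus \Tr U_{\rm np})_{\rm np} = \Tr U_{\rm np}$. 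Iterating $(-)^\dag$ and using $(P^\ast)^\ast \cong P$ for projective $P$ together with $\Tr\Tr X \cong X_{\rm np}$ then recovers $(U,Q)$.

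The substantive content is that $(U,Q)^\dag$ is again $\tau$-rigid. I would go through two-term silting. Assign to a $\tau$-rigid pair $(U,Q)$ the object
\[
T_{(U,Q)} := \bigl(P_1 \xrightarrow{d_1} P_0\bigr) \oplus (Q \to 0) \in \K^b(\proj A),
\]
where $P_1 \xrightarrow{d_1} P_0$ is a minimal projective presentation of $U$. By Theorem \ref{thm:OIY14} (ii) applied to $\mathcal D = \K^b(\proj A)$ with silting object $S = A$, this yields an order-preserving bijection from basic $\tau$-rigid (resp.\ $\tau$-tilting) pairs in $\mod A$ to basic two-term presilting (resp.\ silting) objects in $\K^b(\proj A)$. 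The $A$-dual $(-)^\ast = \Hom_A(-, A)$ extends to a contravariant triangle equivalence $\K^b(\proj A) \to \K^b(\proj A^{\rm op})$, and so sends (pre)silting to (pre)silting. An inspection of summands identifies $(T_{(U,Q)})^\ast$ with $T_{(U,Q)^\dag}$: dualizing $P_1 \xrightarrow{d_1} P_0$ yields $P_0^\ast \xrightarrow{d_1^\ast} P_1^\ast$, whose cokernel is $\Tr U_{\rm np}$ by definition, while the projective summands $U_{\rm pr}$ and $Q$ are sent to $U_{\rm pr}^\ast$ and $Q^\ast$ with their homological degrees (and hence their roles as module component vs.\ projective component) swapped. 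Together with preservation of silting under duality this gives both claim (i) and the bijectivity statement in (ii).

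For the order-reversal in (ii), the silting partial order $T \leq T'$ is defined by $\Hom_{\mathcal D}(T', T[i]) = 0$ for all $i > 0$, and via the natural isomorphism $\Hom_{\mathcal D}(T', T[i]) \cong \Hom_{\mathcal D'}(T^\ast, (T')^\ast[i])$ any contravariant equivalence reverses it; chaining with the order-preserving bijections of Theorem \ref{thm:OIY14} (ii) and Theorem \ref{proorder} (ii) produces the desired reversal on $\tau$-tilting pairs. The main technical obstacle is the identification of $(T_{(U,Q)})^\ast$ with $T_{(U,Q)^\dag}$: one must verify carefully that $P_0^\ast \xrightarrow{d_1^\ast} P_1^\ast$ is a minimal projective presentation of $\Tr U_{\rm np}$ in $\mod A^{\rm op}$ (which is where minimality of the original presentation enters, via the fact that $\Hom_A(d_1, A) = d_1^\ast$ has no direct summand of the form $0 \to P$ or $P \to 0$), and that the splitting of projective versus non-projective summands on each side really matches the recipe defining $(-)^\dag$.
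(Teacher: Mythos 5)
Your proposal is correct, but note that the paper itself offers no proof of this statement: it is imported verbatim from Adachi--Iyama--Reiten (their Theorem 2.14 and Proposition 2.27), whose original argument is module-theoretic, resting on the criterion that $\Hom_A(X,\tau Y)=0$ iff $\Hom_A(d_1^Y,X)$ is surjective for a minimal presentation of $Y$. Your route through two-term silting is a legitimate alternative and, in fact, is essentially the computation this paper carries out later for a different purpose: the identification $(T_{(U,Q)})^\ast\simeq T_{(U,Q)^\dag}$ (with the appropriate shift) is the content of Proposition \ref{pro:compatible}, where the summand bookkeeping $P^\ast\cong N_{\rm pr}$, $M_{\rm pr}^\ast\cong Q$, $\Tr M_{\rm np}\cong N_{\rm np}$ is done via the triangle over a minimal right $\add(S)$-approximation. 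What your approach buys is a uniform derivation of all three assertions at once: well-definedness ($\tau$-rigidity of $(U,Q)^\dag$), involutivity (from $(T^\ast)^\ast\cong T$), and order-reversal (from the contravariance of $\Hom_A(-,A)$ chained with the order-preserving bijections of Theorems \ref{thm:OIY14} and \ref{proorder}), whereas the direct module-theoretic proof handles these separately.

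Two points you gloss over are worth making explicit. First, $\Hom_A(-,A)$ sends a complex concentrated in degrees $-1,0$ to one concentrated in degrees $0,1$, so the object you must compare with $T_{(U,Q)^\dag}$ is $(T_{(U,Q)})^\ast[1]$ (equivalently, pass to the opposite category as in Section \ref{sec:5}); the shift is harmless for the silting property and for the partial order, but without it the degree bookkeeping in your "roles swapped" sentence does not literally parse. Second, the splitting claim you flag does go through, and the clean way to say it is: minimality of the presentation of $U_{\rm np}$ means the complex $Q_1\to Q_0$ has no contractible summands and no stalk summands in either degree (degree $0$ stalks are excluded because $U_{\rm np}$ has no projective summands), and dualizing-plus-shifting preserves contractibility while exchanging the two stalk types; hence $Q_0^\ast\to Q_1^\ast$ is again a minimal presentation, $\Tr U_{\rm np}$ has no projective summands, and the maximal $\add(S[1])$-summand of $(T_{(U,Q)})^\ast[1]$ is exactly $U_{\rm pr}^\ast[1]$, matching the recipe for $(-)^\dag$. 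With these two points filled in, your argument is complete.
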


\begin{definition} [Relative right Bongartz completion] 
Let $(U,Q)$ be a basic $\tau$-rigid pair and  $(M,P)$ a basic $\tau$-tilting pair in $\mod A$. If the left Bongartz completion $B_{(U,Q)^\dag}^-(M,P)^\dag$ of $(U,Q)^\dag$ with respect to $(M,P)^\dag$ exists in $\mod A^{\rm op}$, we call the $\tau$-tilting pair $(B_{(U,Q)^\dag}^-(M,P)^\dag)^\dag$  the {\em right Bongart completion} of $(U,Q)$ with respect to $(M,P)$ in $\mod A$ and denote it by $B^+_{(U,Q)}(M,P):=(B_{(U,Q)^\dag}^-(M,P)^\dag)^\dag$.
\end{definition}

If $(M,P)=(A,0)$, then $(M,P)^\dag=(0,A^\ast)$ and thus $B_{(U,Q)^\dag}^-(M,P)^\dag$ is the absolute left Bongartz completion of $(U,Q)^\dag$ in $\mod A^{\rm op}$, which is the unique minimal $\tau$-tilting pair in $\mod A^{\rm op}$ containing $(U,Q)^\dag$ as a direct summand. By Theorem \ref{thm:air214}, we know that $(B_{(U,Q)^\dag}^-(M,P)^\dag)^\dag$ is the unique maximal $\tau$-tilting pair in $\mod A$ containing $(U,Q)$ as a direct summand. In this case, the right Bongartz completion  $B^+_{(U,Q)}(M,P)$ of $(U,Q)$ with respect to $(M,P)=(A,0)$ coincides with the absolute right Bongartz completion of $(U,Q)$.

\subsection{Relative right Bongartz completions in silting theory}
 We fix a $K$-linear, Krull-Schmidt, Hom-finite triangulated category $\mathcal D$ with a basic silting object $S$. Let  $A=\End_{\mathcal D}(S)$ be the endomorphism algebra of $S$ in $\mathcal D$ and denote by $\mathcal C=S\ast S[1]$.

Let us first list some easy facts in the following lemma.

\begin{lemma} 
\begin{itemize}
    \item [(i)] The opposite category $\mathcal D^{\rm op}$ of $\mathcal D$ is still a triangulated category with shift functor $[1]^{\rm op}=[-1]$. Moreover, $$\xymatrix{X\ar[r]&Y\ar[r]&Z\ar[r]&X[1]}$$ is a triangle in $\mathcal D$ if and only if
$$\xymatrix{Z\ar[r]&Y\ar[r]&X\ar[r]&Z[1]^{\rm op}}$$ is a triangle in $\mathcal D^{\rm op}$.
    \item[(ii)] An object $U$ is presilting (respectively, silting) in $\mathcal D$ if and only if it is presilting (respectively, silting) in $\mathcal D^{\rm op}$.

    \item[(iii)] The map $T\mapsto T$ induces an order-reversing bijection from basic silting objects in $\mathcal D$ to those in $\mathcal D^{\rm op}$.
    
    \item[(iv)]  $A^{\rm op}=\End_{\mathcal D^{\rm op}}(S)\cong \End_{\mathcal D^{\rm op}}(S[-1]^{\rm op})$ and $\mathcal C^{\rm op}=(S\ast S[1])^{\rm op}=S[1]\ast_{\rm op} S=S[-1]^{\rm op}\ast_{\rm op} S$, where the symbol $``\ast_{\rm op}"$ means that we consider the extensions in the opposite category $\mathcal D^{\rm op}$.
    \end{itemize}
\end{lemma}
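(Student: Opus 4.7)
The plan is to treat this lemma as a collection of essentially formal statements about what happens when one passes to the opposite triangulated category, and to verify each item by unwinding definitions and translating Hom-spaces via the fundamental identity $\Hom_{\mathcal D^{\rm op}}(X,Y)=\Hom_{\mathcal D}(Y,X)$. Since no deep input is required, the strategy is sequential: verify (i) first (it supplies the triangulated structure that the later items rely on), then deduce (ii), (iii), (iv) as consequences.

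For (i), I would invoke the standard fact that if $(\mathcal D,[1],\Delta)$ is triangulated, then $(\mathcal D^{\rm op},[-1],\Delta^{\rm op})$ is triangulated, where $\Delta^{\rm op}$ consists of the reversed candidate triangles. The verification of the axioms (TR1)--(TR4) is routine and well-known, so I would simply state it and record the resulting description of triangles. For (ii), the key computation is that, for $i>0$,
\[
\Hom_{\mathcal D^{\rm op}}(U,U[i]^{\rm op})=\Hom_{\mathcal D^{\rm op}}(U,U[-i])=\Hom_{\mathcal D}(U[-i],U)=\Hom_{\mathcal D}(U,U[i]),
\]
so the presilting condition is self-dual. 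For the silting condition one checks that thick subcategories of $\mathcal D$ and of $\mathcal D^{\rm op}$ coincide as subcategories, since the axioms for thickness (closure under direct summands, shifts and cones of morphisms between objects of the subcategory) are stable under $(-)^{\rm op}$.

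For (iii), I would use the same Hom-translation to show that the partial order $\leq$ of Theorem \ref{thm:AIcover}(i) reverses: for silting objects $T,S$ in $\mathcal D$ one has, for $i>0$,
\[
\Hom_{\mathcal D^{\rm op}}(T,S[i]^{\rm op})=\Hom_{\mathcal D}(S[-i],T)=\Hom_{\mathcal D}(S,T[i]),
\]
so $T\leq_{\rm op} S$ in $\mathcal D^{\rm op}$ is equivalent to $S\leq T$ in $\mathcal D$; bijectivity is trivial since the map $T\mapsto T$ is just the identity on objects. For (iv), the identity $A^{\rm op}=\End_{\mathcal D^{\rm op}}(S)$ is by definition of the opposite algebra, and the isomorphism $\End_{\mathcal D^{\rm op}}(S)\cong\End_{\mathcal D^{\rm op}}(S[-1]^{\rm op})$ follows because $[-1]^{\rm op}$ is an autoequivalence. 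For the description of $\mathcal C^{\rm op}$, I would take an object $Z\in\mathcal C=S*S[1]$ with defining triangle $X\to Z\to Y\to X[1]$ in $\mathcal D$ with $X\in\add S$, $Y\in\add (S[1])$, and use (i) to rewrite it as a triangle $Y\to Z\to X\to Y[1]^{\rm op}$ in $\mathcal D^{\rm op}$; this exhibits $Z$ as an extension of $Y$ by $X$ in $\mathcal D^{\rm op}$, giving $\mathcal C^{\rm op}\subseteq S[1]*_{\rm op}S$, and the reverse inclusion follows by the same argument applied in the other direction.

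The main obstacle I anticipate is purely notational rather than mathematical: one must be careful to distinguish the shift $[1]$ in $\mathcal D$ from the shift $[1]^{\rm op}=[-1]$ in $\mathcal D^{\rm op}$ at every step, and similarly to distinguish $*$ (extensions in $\mathcal D$) from $*_{\rm op}$ (extensions in $\mathcal D^{\rm op}$). I would therefore fix once and for all the conventions $[1]^{\rm op}=[-1]$ and $\Hom_{\mathcal D^{\rm op}}(X,Y)=\Hom_{\mathcal D}(Y,X)$ before starting the verification, so that each item reduces to a single line of symbol pushing.
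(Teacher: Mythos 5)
Your proposal is correct: each item is indeed a formal consequence of the identity $\Hom_{\mathcal D^{\rm op}}(X,Y)=\Hom_{\mathcal D}(Y,X)$, the convention $[1]^{\rm op}=[-1]$, and the standard triangulated structure on $\mathcal D^{\rm op}$, and the paper itself states this lemma without proof as a list of "easy facts," so your definition-unwinding verification is exactly the routine argument the paper leaves implicit. (The only cosmetic point is in (iii), where your displayed computation proves the equivalence with the roles of $S$ and $T$ swapped relative to your stated conclusion; since the statement is symmetric under relabeling, this is harmless.)
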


{\em Notations:}
\begin{itemize}
    \item  $\tau\text{-tilt }A$: the set of  isomorphism classes of basic $\tau$-tilting pairs in $\mod A$;
    \item $\text{2-silt }\mathcal C$: the  set of isomorphism classes of basic silting objects in $\mathcal C$;

    \item Denote by $\kappa:\text{2-silt }\mathcal C\rightarrow \tau\text{-tilt }A$ the order-preserving bijection given in Theorem \ref{thm:OIY14};
    \item Denote by  $\kappa^{\rm op}:\text{2-silt }\mathcal C^{\rm op}\rightarrow \tau\text{-tilt }A^{\rm op}$ the order-preserving bijection obtained by applying Theorem \ref{thm:OIY14} to the opposite categories $\mathcal D^{\rm op}$ and $\mathcal C^{\rm op}$.
\end{itemize}

\begin{proposition}\label{pro:compatible}
   The following diagram is commutative.
   $$\xymatrix{\text{2-silt }\mathcal C\ar[d]_{\kappa}\ar[r]^{\rm id}&\text{2-silt }\mathcal C^{\rm op}\ar[d]^{\kappa^{\rm op}}\\
   \tau\text{-tilt }A\ar[r]^{(-)^\dag}&\tau\text{-tilt }A^{\rm op}
   }$$
\end{proposition}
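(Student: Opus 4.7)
The plan is to unwind $\kappa$ and $\kappa^{\rm op}$ on a fixed $T \in \text{2-silt }\mathcal C$ and verify $\kappa^{\rm op}(T) = \kappa(T)^\dag$ directly. First I fix two decompositions of $T$. Using Theorem~\ref{thm:OIY14}(ii) for $\mathcal D$, write $T = X_T \oplus Y_T[1]$ with $Y_T[1]$ the maximal summand of $T$ in $\add(S[1])$, and further split $X_T = X_T^{\rm np} \oplus W$ where $W$ is the maximal summand of $X_T$ in $\add S$. Under the equivalence $\overline{(-)} : \mathcal C/[S[1]] \to \mod A$, the module $\overline W$ is the maximal projective summand of $\overline{X_T}$ and $\overline{X_T^{\rm np}}$ its non-projective complement, so
\[
\kappa(T)^\dag = \bigl(\overline{Y_T}^\ast \oplus \Tr \overline{X_T^{\rm np}},\; \overline{W}^\ast\bigr).
\]

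Next I apply Theorem~\ref{thm:OIY14}(ii) inside $\mathcal D^{\rm op}$. The silting object playing the role of ``$S$'' is $S' := S[-1]^{\rm op}$, which as an object of $\mathcal D$ equals $S[1]$, and $S'[1]^{\rm op}$ equals $S$ itself. The canonical decomposition of $T$ in $\mathcal C^{\rm op}$ therefore has the form $T = X_T' \oplus Y_T'[1]^{\rm op}$ with $Y_T'[1]^{\rm op}$ the maximal summand of $T$ in $\add(S'[1]^{\rm op}) = \add S$. Comparing with the first decomposition, this maximal summand is $W$ (using $\add S \cap \add S[1] = 0$), forcing $Y_T' = W[1]$ and $X_T' = X_T^{\rm np} \oplus Y_T[1]$. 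The functor for the $\mathcal D^{\rm op}$ side is $F^{\rm op} = \Hom_{\mathcal D^{\rm op}}(S',-) = \Hom_{\mathcal D}(-, S[1])$, and thus
\[
\kappa^{\rm op}(T) = \bigl(F^{\rm op}(X_T^{\rm np}) \oplus F^{\rm op}(Y_T[1]),\; F^{\rm op}(W[1])\bigr).
\]

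The two ``projective'' coordinates match by the routine identification $\Hom_{\mathcal D}(Y[1], S[1]) \cong \Hom_{\mathcal D}(Y,S) \cong \Hom_A(\overline Y, A) = \overline{Y}^\ast$ for $Y \in \add S$, which gives $F^{\rm op}(W[1]) \cong \overline W^\ast$ and $F^{\rm op}(Y_T[1]) \cong \overline{Y_T}^\ast$. So everything reduces to establishing $F^{\rm op}(X_T^{\rm np}) \cong \Tr \overline{X_T^{\rm np}}$. For this I pick a triangle $S_1 \to S_0 \to X_T^{\rm np} \to S_1[1]$ with $S_0, S_1 \in \add S$ whose image under $F$ is a minimal projective presentation $P_1 \to P_0 \to \overline{X_T^{\rm np}} \to 0$. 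Applying $\Hom_{\mathcal D}(-, S[1])$ and invoking the silting vanishings $\Hom_{\mathcal D}(S_i, S[j]) = 0$ for $j \neq 0$ together with $\Hom_{\mathcal D}(X_T^{\rm np}, S[j]) = 0$ for $j \geq 2$ (as $X_T^{\rm np} \in \mathcal C$), the long exact sequence collapses to $P_0^\ast \to P_1^\ast \to \Hom_{\mathcal D}(X_T^{\rm np}, S[1]) \to 0$, and its cokernel is $\Tr \overline{X_T^{\rm np}}$ by definition. Assembling, $\kappa^{\rm op}(T) = \kappa(T)^\dag$.

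The only non-formal point is guaranteeing that the chosen triangle yields a \emph{minimal} projective presentation, so that the cokernel of the dual map is genuinely $\Tr \overline{X_T^{\rm np}}$ rather than $\Tr \overline{X_T^{\rm np}}$ up to projective summands. This uses that $X_T^{\rm np}$ has no summand in $\add S \cup \add S[1]$, combined with the compatibility of $\overline{(-)}$ with minimal right $\add S$-approximations, the same mechanism behind the identity $\overline{\tau_S X} = \tau \overline X$ already invoked in Proposition~\ref{pro:compare1} via \cite{Iyama-Yoshino_2008}*{Corollary 6.5}. Everything else is bookkeeping between the two decompositions and standard silting Hom-vanishings.
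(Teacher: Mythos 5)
Your proposal is correct and follows essentially the same route as the paper: decompose $T$ into its $\add S$-part, $\add(S[1])$-part and the rest, compute $\kappa(T)^\dag$ and $\kappa^{\rm op}(T)$ coordinatewise, match the projective coordinates by Yoneda-type identifications, and realize $\Tr$ of the non-projective coordinate as $\Hom_{\mathcal D}(X_T^{\rm np},S[1])$ by dualizing a triangle with terms in $\add S$ whose image under $F$ is a minimal projective presentation. The only point you flag as non-formal is handled in the paper by taking the cone of a minimal right $\add S$-approximation of $X_T^{\rm np}$, with \cite{cao_2021}*{Proposition 3.15} guaranteeing the third term lies in $\add S$, which is exactly the mechanism you sketch.
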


\begin{proof}
Let $T$ be a basic silting object in $\mathcal C$. We decompose $T$ as $$T=T_{-1}\oplus T_0\oplus T_1,$$ where $T_0$ is a maximal direct summand of $T$ belonging to $\add(S)$ and $T_1$ is a maximal direct summand of $T$ belonging to $\add(S[1])$. Then 
\begin{eqnarray}
    \kappa(T)&=&(M_{\rm pr}\oplus M_{\rm np},P)\in \tau\text{-tilt }A,\nonumber\\
     \kappa^{\rm op}(T)&=&(N_{\rm pr}\oplus N_{\rm np},Q)\in \tau\text{-tilt }A^{\rm op},\nonumber
\end{eqnarray}
where
\begin{eqnarray}
   M_{\rm pr}&=&\Hom_{\mathcal D}(S,T_{0}),\;\;M_{\rm np}=\Hom_{\mathcal D}(S,T_{-1}),\;\; P=\Hom_{\mathcal D}(S,T_{1}[-1]),\nonumber \\
    N_{\rm pr}&=&\Hom_{\mathcal D^{\rm op}}(S[-1]^{\rm op},T_{1})=\Hom_{\mathcal D}(T_1,S[1]),\nonumber\\
    N_{\rm np}&=&\Hom_{\mathcal D^{\rm op}}(S[-1]^{\rm op},T_{-1})=\Hom_{\mathcal D}(T_{-1},S[1]),\nonumber\\
    Q&=&\Hom_{\mathcal D^{\rm op}}(S[-1]^{\rm op},T_{0}[-1]^{\rm op})=\Hom_{\mathcal D}(T_{0}[1],S[1]).\nonumber
\end{eqnarray}
By definition, we know that $(M_{\rm pr}\oplus M_{\rm np},P)^\dag=(P^\ast\oplus \Tr M_{\rm np},M_{\rm pr}^\ast)$. In order to show that the given diagram is commutative, it suffices to prove
$(P^\ast\oplus\Tr M_{\rm np}, M_{\rm pr}^\ast)\cong (N_{\rm pr}\oplus N_{\rm np},Q)$.

Since $T_0,T_1[-1]\in \add(S)$, we have
\begin{eqnarray}
  P^\ast&=&\Hom_A(P,A)=\Hom_A(\Hom_{\mathcal D}(S,T_1[-1]),\Hom_{\mathcal D}(S,S))\cong\Hom_{\mathcal D}(T_1[-1],S)\cong N_{\rm pr},\nonumber \\
  M_{\rm pr}^\ast&=&\Hom_A(M_{\rm  
  pr},A)=\Hom_A(\Hom_{\mathcal D}(S,T_0),\Hom_{\mathcal D}(S,S))\cong\Hom_{\mathcal D}(T_0,S)\cong Q.\nonumber
\end{eqnarray}

We still need to show $\Tr M_{\rm np}\cong N_{\rm np}$.
Let $f:S'\rightarrow T_{-1}$ be a right minimal $\add(S)$-approximation of $T_{-1}$ and extend it to a triangle:
\begin{eqnarray}\label{eqn:min-pro}
\xymatrix{S^{\prime\prime}\ar[r]&S'\ar[r]^f&T_{-1}\ar[r]&S^{\prime\prime}[1].}
\end{eqnarray}
Since $T_{-1}$ belongs to $\mathcal C=S\ast S[1]$ and by \cite[Proposition 3.15]{cao_2021}, we have $S^{\prime\prime}\in \add(S)$. Applying the functor $\Hom_{\mathcal D}(S,-)$ to the triangle \eqref{eqn:min-pro}, we get
\begin{eqnarray}\label{eqn:exact-1}
    \xymatrix{\Hom_{\mathcal D}(S,S^{\prime\prime})\ar[r]&\Hom_{\mathcal D}(S,S')\ar[r]&\Hom_{\mathcal D}(S,T_{-1})\ar[r]&0},\nonumber
\end{eqnarray}
which is a minimal projective presentation of $M_{\rm np}=\Hom_{\mathcal D}(S,T_{-1})$ in $\mod A$.

Applying the functor $\Hom_{\mathcal D}(-,S)$ to the triangle \eqref{eqn:min-pro}, we get
\begin{eqnarray}\label{eqn:exact-2}
    \xymatrix{\Hom_{\mathcal D}(S',S)\ar[r]&\Hom_{\mathcal D}(S^{\prime\prime},S)\ar[r]&\Hom_{\mathcal D}(T_{-1}[-1],S)\ar[r]&0},\nonumber
\end{eqnarray}
which is an exact sequence in $\mod A^{\rm op}$.
By definition of $\Tr M_{\rm np}$, we obtain $$\Tr M_{\rm np}=\Hom_{\mathcal D}(T_{-1}[-1],S)\cong N_{\rm np}.$$
Hence, $(M_{\rm pr}\oplus M_{\rm np},P)^\dag= (P^\ast\oplus \Tr M_{\rm np},M_{\rm pr}^\ast)\cong(N_{\rm pr}\oplus N_{\rm np},Q)$ and thus the given diagram is commutative.
\end{proof}

\begin{definition}[Relative right Bongartz completion]
\label{def:right-silting}
Let $S$ be a basic silting object in $\mathcal D$ and denote by $\mathcal C=S\ast S[1]$. Let  $U$ be a basic presilting object in $\mathcal C$ and $T$ a basic silting object in $\mathcal C$.  If the left Bongartz completion $B_U^{\rm op,-}(T)$ of $U$ with respect to $T$ exists in $\mathcal C^{\rm op}$, we call the basic silting object $B_U^{\rm op,-}(T)$ the {\em right Bongartz completion} of $U$ with respect to $T$ in $\mathcal C$ and denote it by $B_U^+(T):=B_U^{\rm op,-}(T)$.
\end{definition}

Keep $U$ and $T$ as above. We take a triangle

$$\xymatrix{T\ar[r]&Y_U\ar[r]&U^\prime\ar[r]^g&T[1]}$$ 
with a minimal right $\add U$-approximation $g:U^\prime\rightarrow T[1]$. It is easy to see that if the right Bongartz completion $B_U^+(T)$ of $U$ with respect to $T$ exists, then it is given by  $B_U^+(T)=(U\oplus Y_U)^\flat$, where  $(U\oplus Y_U)^\flat$  is the basic object corresponding to $U\oplus Y_U$.

\begin{remark}
    Thanks to Proposition \ref{pro:compatible}, we know that the relative right Bongartz completions in silting theory and $\tau$-tilting theory are compatible.
\end{remark}

\bibliographystyle{alpha}
\bibliography{myref}

\end{document}